\definecolor{my-blue}{cmyk}{1,0.6,0,0}
\definecolor{my-green}{cmyk}{0.8,0,1,0.5}
\numberwithin{equation}{section}
\newcommand\FF{{\mathbb F}}
\newcommand\GG{{\mathbb G}}
\newcommand\NN{{\mathbb N}}
\newcommand\ZZ{{\mathbb Z}}
\newcommand\QQ{{\mathbb Q}}
\newcommand\RR{{\mathbb R}}
\newcommand{\Fq}{{\FF_{\!q}}}
\newcommand{\cL}{\mathcal{L}}
\newcommand{\cO}{\mathcal{O}}
\newcommand{\Ga}{\GG_a}
\newcommand{\cK}{\mathcal{L}}
\newcommand{\mot}{\mathsf{M}}  % notation for the t-motive
\newcommand{\dumot}{\mathfrak{M}}  % notation for the dual t-motive
\newcommand{\grp}{\mathrm{grp}}
\newcommand{\mothat}{\hat{\mot}^{\sigma}} % completion of \mot w.r.t v_\sigma
\newcommand{\stlatt}{\Lambda_{\rm st}}  % standard \cO-lattice generated by fixed basis of \mot
\newcommand{\one}{\mathds{1}}
\newcommand{\transp}[1]{#1^{\rm tr}}   % transpose
\newcommand{\bigmid}{\, \Big| \,}
\newcommand{\dk}{\check{\kappa}} % "dual kappa" for basis of \dumot.
\newcommand{\pr}{\mathrm{pr}}
\newcommand{\inj}{\mathrm{in}}
\newcommand{\dphi}{d\phi}
\newcommand{\id}{\mathrm{id}}
\newcommand{\ord}{\mathrm{ord}}
\newcommand{\rk}{\mathrm{rk}}
\newcommand{\ps}[1]{[\![#1]\!]}  % brackets for power series 
\newcommand{\ls}[1]{(\!(#1)\!)}% brackets for laurent series
\renewcommand{\sp}[1]{\{#1\}} % brackets for skew polynomial ring
\newcommand{\sps}[1]{\{\!\{#1\}\!\}} % brackets for skew power series ring
\newcommand{\sls}[1]{(\!\{#1\}\!)}% brackets for skew laurent series
\newcommand{\svect}[2]{\left( \begin{matrix} {#1}_{1}\\ \vdots \\ {#1}_{#2}\end{matrix}\right)}
\newcommand{\zvect}[2]{\left( \begin{matrix} {#1}_{1}& {#1}_{2} & \cdots & {#1}_{#2}\end{matrix}\right)}
\newcommand{\betr}[1]{\lvert #1\rvert}
\newcommand{\norm}[1]{\left\lVert #1\right\rVert}
\newcommand{\scalar}[1]{\begin{smatrix} 
#1 & 0 & \cdots & 0\\ 
0 & \ddots & \ddots & \vdots\\
\vdots & \ddots & \ddots & 0 \\
0 & \cdots & 0 & #1 \end{smatrix}
}
\newenvironment{smatrix}{\left(\begin{smallmatrix}}{\end{smallmatrix}\right)}
\DeclareMathOperator{\Hom}{Hom}
\DeclareMathOperator{\End}{End}
\DeclareMathOperator{\Mat}{Mat}
\DeclareMathOperator{\GL}{GL}
\DeclareMathOperator{\Lie}{Lie}
\theoremstyle{plain}
\newtheorem{thm}{Theorem}[section]
\newtheorem{cor}[thm]{Corollary}
\newtheorem{lem}[thm]{Lemma}
\newtheorem{prop}[thm]{Proposition}
\newtheorem{mainthm}{Theorem}
\theoremstyle{definition}
\newtheorem{defn}[thm]{Definition}
\newtheorem{exmp}[thm]{Example}
\newtheorem{rem}[thm]{Remark}
\begin{document}

\title[Abelian equals A-finite]{Abelian equals A-finite for Anderson A-modules}
\author{Andreas Maurischat}
\address{\rm {\bf Andreas Maurischat}, FH Aachen University of Applied Sciences, RWTH Aachen University, Germany}
\email{\sf andreas.maurischat@matha.rwth-aachen.de}

%\date{\today}
\newdate{date}{18}{02}{2024}
\date{\displaydate{date}}

% AMS-Classification
%\classification{
% 11G09: Drinfel'd modules; higher-dimensional motives, etc.
% 11J93: Transcendence theory of Drinfel'd and $t$-modules, 
% 16W60: Valuations, completions, formal power series and related constructions (associative rings and algebras)
% 
%}

% Keywords
%\keywords{abelian, t-module, t-motive, skew field, Newton polygon}

%-----------------------------------------------------------  
\begin{abstract}
Anderson introduced t-modules as higher dimensional analogs of Drinfeld modules. Attached to such a t-module, there are its t-motive and its dual t-motive. The t-module gets the attribute ``abelian'' when the t-motive is a finitely generated module, and the attribute ``t-finite'' when the dual t-motive is a finitely generated module. The main theorem of this article is the affirmative answer to the long standing question whether these two attributes are equivalent. The proof relies on an invariant of the t-module and a condition for that invariant which is necessary and sufficient for both being abelian and being t-finite.
We further show that this invariant also provides the information whether the t-module is pure or not.
Moreover, we conclude that also over general coefficient rings A, i.e.~for Anderson A-modules, the attributes of being abelian and being A-finite are equivalent. 
%We furthermore prove a structural theorem on non-abelian t-modules, and introduce the notions of pseudo-abelian and almost abelian t-modules.
%Those are more general than abelian t-modules but still share a lot of properties. The study of those more general t-modules is motivated by the fact that they naturally arise e.g. as extensions of Drinfeld modules by $\GG_a$ which are considered for de Rham cohomology. These extensions are even almost abelian.
%
%Maybe we show that pseudo-abelian A-modules are regular in the sense of Yu \cite{jy:ahdm}. (or don't we?)
\end{abstract}
%-----------------------------------------------------------  

\maketitle

\setcounter{tocdepth}{1}
\tableofcontents
%---------------------------------------------------------------------
% Beginn des eigentlichen Artikels
%---------------------------------------------------------------------

\section*{Introduction}

Anderson $t$-modules are higher dimensional analogs of Drinfeld modules, and they are considered as the function field analogs in positive characteristic of abelian varieties. Of great interest are the periods of Anderson $t$-modules, logarithmic vectors, (multiple) zeta values and much more, and similar conjectures about the transcendence and algebraic independence are stated as for classical periods, logarithms etc.

\medskip

Let $K$ be perfect field containing the finite field $\Fq$ of $q$ elements. Roughly speaking, an Anderson $t$-module over $K$ of dimension $d$ is a certain pair $(E,\phi)$ where $E$ is an algebraic group over $K$ which is isomorphic to $\GG_{a}^d$ -- the $d$-fold product of the additive group --, equipped with a compatible $\Fq$-action, and 
\[\phi:\Fq[t]\to \End_{\grp,\Fq}(E), a\mapsto \phi_a\]
is a certain $\Fq$-algebra homomorphism into $\Fq$-linear endomorphisms of $E$.\footnote{See Section \ref{sec:t-modules} for the precise definition.}

\medskip

There are two kinds of motives attached to Anderson $t$-modules.
The first one -- called \emph{$t$-motive} -- was defined by Anderson in his seminal paper \cite{ga:tm}. Among others, it was used ibid.~to provide a new criterion for uniformizability of $t$-modules.
The other kind -- called \emph{dual $t$-motive} -- was defined later in \cite{ga-wb-mp:darasgvpc}, and is the base for the ABP-criterion \cite[Thm.~3.1.1]{ga-wb-mp:darasgvpc} and the consequence \cite[Thm.~5.2.2]{mp:tdadmaicl} which opened a new way to study algebraic independence of periods and logarithms. This criterion and a generalization of Chang \cite[Thm.~1.2]{cc:nrvabpc} was then successfully applied in several other papers (e.g.~\cite{cc-jy:daraszvpc,cc-mp-dt-jy:aiagvczv,cc-mp-jy:fdeaizvpec,cc-mp:aipldm,ym:aicppcmv,ym:aicmvcp,am:ptmaip,am:aicph,cn:arhpldm}).
In \cite{am:ptmsv}, we showed that the criteria mentioned above can also be applied to $t$-motives, and that the $t$-motive can also be used to study algebraic independence of periods.

For using the $t$-motive or the dual $t$-motive in that way, the $t$-module has to be uniformizable in both cases. For using the $t$-motive, in addition the $t$-module needs to satisfy a property called \textit{abelian} -- which apparently was included in Anderson's original definition in \cite{ga:tm}.
For using the dual $t$-motive, however, the $t$-module needs to satisfy another property called \textit{$t$-finite}.

\medskip

The \emph{$t$-motive} attached to such a $t$-module consists of the $\Fq$-linear group scheme homomorphisms from $E$ into the additive group $\Ga$,
\[ \mot(E):=\Hom_{\grp,\Fq}(E,\Ga), \]
and the \emph{dual $t$-motive} is defined as
\[ \dumot(E):=\Hom_{\grp,\Fq}(\Ga,E). \]
They both carry a structure as $K[t]$-module obtained from the $K$-action on $\Ga$ and the $\Fq[t]$-action $\phi$ on $E$.
The $t$-module $E$ is said to be \emph{abelian} if its $t$-motive $\mot(E)$ is a finitely generated $K[t]$-module, and the $t$-module $E$ is said to be \emph{$t$-finite} if its dual $t$-motive $\dumot(E)$ is a finitely generated $K[t]$-module.

\medskip

Although these two notions are quite similar, and all abelian examples considered up to date were indeed $t$-finite and vice versa, it was an open problem whether these two properties really agree. Our first main theorem (Theorem~\ref{mainthm:equivalence}) answers this question to the affirmative.

%\begin{mainthm}
%For an Anderson $t$-module $(E,\phi)$, the following are equivalent.
%\begin{enumerate}
%\item $E$ is abelian,
%\item $E$ is $t$-finite.
%\end{enumerate}
%\end{mainthm}

We should also mention here that this theorem implies the positive answer to the same question for Anderson $A$-modules, i.e.~over more general coefficient rings $A/\Fq[t]$ (see Corollary~\ref{cor:main-theorem-A-modules}).

\medskip

The proof of this theorem stems from a condition on the endomorphism $\phi_t$ which is necessary and sufficient for both being abelian and being $t$-finite.
The proof involves techniques over non-commutative rings which are well known in the commutative setting, but were only partially developed in this setting. Hence, we have to develop these techniques here, too.

For describing the condition on the endomorphism $\phi_t$, we have to provide more notation.
%Let $\tau:K\to K, a\mapsto a^q$ be the $q$-th power Frobenius map which is an isomorphism as $K$ is assumed to be perfect, and $\sigma=\tau^{-1}$ its inverse. 
We denote by $K\{\tau\}$ the skew polynomial ring
\[  K\sp{\tau}=\left\{ \sum_{i=0}^n \alpha_i\tau^i \,\middle|\, n\geq 0, \alpha_i\in K\right\} \]
with multiplication uniquely given by additivity and the rule
\[ \tau \cdot \alpha = \alpha^q\cdot \tau, \]
for all $\alpha\in K$.
%, i.e., 
%\[ \left( \sum_{i=0}^n \alpha_i\tau^i\right) \cdot \left(\sum_{j=0}^m \beta_j\tau^j\right)
%= \sum_{k=0}^{n+m} \left( \sum_{i=0}^k \alpha_i\cdot (\beta_{k-i})^{q^i} \right) \cdot \tau^k.  \]
This ring equals the ring $\End_{\grp,\Fq}(\Ga)$ of $\Fq$-linear group endomorphisms of $\Ga$ by identifying $\tau$ with the $q$-th power Frobenius map, and $\alpha\in K$ with scalar multiplication by $\alpha$.
We further consider the skew Laurent series ring over $K$ in $\sigma=\tau^{-1}$,
\[ K\sls{\sigma} :=\left\{ \sum_{i=i_0}^\infty \alpha_i\sigma^i \,\middle|\, i_0\in \ZZ, \alpha_i\in K\right\}, \]
with $\sigma \cdot \alpha = \alpha^{1/q}\cdot \sigma$ for all $\alpha\in K$ (well-defined as $K$ is assumed to be perfect), as well as the subring of skew power series in $\sigma$,
\[ K\sps{\sigma} :=\left\{ \sum_{i=0}^\infty \alpha_i\sigma^i \,\middle|\, \alpha_i\in K\right\}. \]

The ring $K\sp{\tau}$ is naturally embedded into $K\sls{\sigma}$ via
\[ \sum_{i=0}^n \alpha_i\tau^i \mapsto \sum_{i=0}^n \alpha_i\sigma^{-i}. \]

After choosing a coordinate system for the $t$-module $E$, the endomorphism $\phi_t$ is represented by a matrix $D\in \Mat_{d\times d}(K\sp{\tau})$ with respect to this coordinate system, and we consider $D$ inside $\Mat_{d\times d}(K\sls{\sigma})$ via the natural embedding $K\sp{\tau}\hookrightarrow K\sls{\sigma}$.

As for matrices over commutative fields, one can compute invariant factors of $D$, i.e.~polynomials
$\lambda_1,\ldots, \lambda_d\in K\sls{\sigma}[t]$ with $\lambda_1|\lambda_2|\ldots|\lambda_d$ (i.e.~$\lambda_i$ left-dividing and right-dividing $\lambda_{i+1}$) obtained by diagonalizing the matrix $C:=t\cdot \one_d-D$ via row and column operations. 
As in the commutative case, we attach to such a polynomial $\lambda=\sum_{i=0}^n a_it^i\in K\sls{\sigma}[t]$ a Newton polygon. It is defined to be the lower convex hull of the set of points
$P_{i}=\left(i,\ord_{\sigma}(a_{i})\right)$, $i=0,\ldots, n$ where $\ord_{\sigma}(a_i)$ denotes the order of $a_i$ in $\sigma$.
Although, the invariant factors $\lambda_1,\ldots, \lambda_d$ are not unique in the non-commutative case, the orders in $\sigma$ of their coefficients are unique, and hence the Newton polygons are well-defined invariants.

The criterion for being abelian and $t$-finite then states:

\begin{mainthm} (Theorem~\ref{thm:main-theorem}) \label{mainthm:equivalence}
For a $t$-module $E$ the following are equivalent:
\begin{enumerate}
\item $E$ is abelian,
\item $E$ is $t$-finite,
\item the Newton polygon of the last invariant factor $\lambda_d$ constructed above has positive slopes only. 
\end{enumerate}
\end{mainthm}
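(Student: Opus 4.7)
The strategy is to make both $\mot(E)$ and $\dumot(E)$ completely explicit in coordinates, reduce the finite generation question to a linear-algebra problem over the skew field $K\sls{\sigma}$ (where $C=t\one_d-D$ admits a Smith normal form), and then translate the Newton polygon condition back to the integral level.

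\textbf{Step 1 (Explicit descriptions).} After fixing coordinates, $\mot(E)$ is identified with $K\sp{\tau}^{1\times d}$ as a left $K\sp{\tau}$-module, the $t$-action being right multiplication by $D$; similarly $\dumot(E)\cong K\sp{\tau}^{d\times 1}$ as a right $K\sp{\tau}$-module with $t$ acting by left multiplication by $D$. In both cases the $K$-action and the $t$-action commute by associativity, so we obtain a $K[t]$-module structure in a canonical way.

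\textbf{Step 2 (Smith normal form).} Develop the Smith normal form over the non-commutative principal ideal domain $K\sls{\sigma}[t]$ (with $t$ central): there exist $P,Q\in\GL_d(K\sls{\sigma}[t])$ with $PCQ=\diag(\lambda_1,\ldots,\lambda_d)$ and $\lambda_1\mid\cdots\mid\lambda_d$ two-sidedly. Show that the Newton polygons of the $\lambda_i$ are intrinsic invariants (independent of the choices of $P,Q$), and, using additivity of Newton polygons under factorization together with $\lambda_i\mid\lambda_d$, observe that positive slopes for $\lambda_d$ force positive slopes for every $\lambda_i$.

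\textbf{Step 3 (Key equivalence).} After the base change $K\sp{\tau}\hookrightarrow K\sls{\sigma}$, the $t$-action on $\mot(E)\otimes_{K\sp{\tau}}K\sls{\sigma}$ is a $K\sls{\sigma}$-linear endomorphism whose invariant factors are exactly the $\lambda_i$. Assuming positive slopes of $\lambda_d$, one can normalize each $\lambda_i$ to a \emph{distinguished} polynomial in $t$ with leading coefficient $1$ and all lower coefficients in $\sigma K\sps{\sigma}$; this yields a controlled $K\sps{\sigma}$-lattice inside $\bigoplus_i K\sls{\sigma}[t]/(\lambda_i)$, and a careful choice of integral lift recovers a finite $K[t]$-generating set of $\mot(E)$ itself. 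Conversely, if $\mot(E)$ is finitely generated over $K[t]$, then the images of a finite generating set in the base-changed module produce a monic polynomial (in $t$) annihilating the $t$-action whose coefficients have controlled $\sigma$-valuations, forcing the Newton polygon of $\lambda_d$ to have positive slopes only. The analogous argument for $\dumot(E)$ via left multiplication by $D$ yields the same criterion, using that $t$ is central in $K\sls{\sigma}[t]$ so that the two-sided Smith form of $C$ governs left and right structures simultaneously.

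\textbf{Main obstacle.} The technical heart is the noncommutative Smith theory over $K\sls{\sigma}[t]$ together with the left/right symmetry needed to see that the \emph{same} invariant $\lambda_d$ controls both $\mot(E)$ (a left-module problem) and $\dumot(E)$ (a right-module problem). Making sure the Newton polygon is independent of the unit ambiguity in the non-commutative Smith form, and tracking how the $K\sp{\tau}$-integral lattice structure survives the base change to $K\sls{\sigma}$, are the most delicate steps; they force one to redevelop pieces of divisibility and elementary divisor theory that are classical in the commutative setting.
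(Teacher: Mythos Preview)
Your overall strategy---passing to the complete skew field $K\sls{\sigma}$, computing a Smith normal form of $C=t\one_d-D$ over $K\sls{\sigma}[t]$, and reading off the Newton polygon of $\lambda_d$---matches the paper's. Where your plan diverges, and where it is genuinely incomplete, is Step~3.

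In the direction ``positive slopes $\Rightarrow$ abelian/$t$-finite'' you say that a distinguished-polynomial normalization and a ``careful choice of integral lift recovers a finite $K[t]$-generating set of $\mot(E)$ itself.'' But this lift is the entire difficulty. The completion $K\sls{\sigma}\otimes_{K\{\tau\}}\mot$ is a $d$-dimensional $K\sls{\sigma}$-vector space, and a $K\sps{\sigma}$-lattice in it lives on the non-negative-$\sigma$ side, whereas $\mot$ itself consists of the elements with only non-positive $\sigma$-powers. Knowing that $t$ acts nicely on a lattice in the completion does not, by any formal mechanism, produce $K[t]$-relations among the $\tau^i\kappa_j$ inside $\mot$. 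The paper bridges this gap via an intermediate \emph{rank-modulo-$\sigma^s$} condition on $D$ (Theorem~\ref{thm:equvialent-matrix-conditions}): positive slopes force, for some $n$, that $\sigma^{s_n}D^n$ has full rank modulo $\sigma^{s_n}$, and from this one explicitly manufactures (Proposition~\ref{prop:sufficient-criterion}) a matrix $D'\in\Mat_{d\times d}(K\{\tau\})$ such that $D'\cdot(D-t\one_d)$ has invertible top $\tau$-coefficient, whence finite $K[t]$-generation by the standard ``strictly pure'' argument. Your plan contains no analogue of this explicit descent step.

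In the converse direction you say a finite $K[t]$-generating set ``produces a monic polynomial annihilating the $t$-action whose coefficients have controlled $\sigma$-valuations, forcing the Newton polygon of $\lambda_d$ to have positive slopes only.'' But no polynomial in $t$ annihilates $\mot$ (it is free of positive rank over $K[t]$), so you must mean a relation of the shape $\tau^r\one_d=\sum_k B_kD^k$ with $\deg_\tau(B_k)<r$. Even granting this, the inference to ``all slopes of $\lambda_d$ positive'' is not immediate: one still needs the module-theoretic analysis (Theorem~\ref{thm:one-slope-modules}) showing that a non-positive slope in $\lambda_d$ traps one coordinate of every $t^k\cdot m$ inside a fixed $K\sps{\sigma}$-lattice, contradicting any such relation via a rank count. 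The paper packages this as $(2)\Rightarrow(3)$ of Theorem~\ref{thm:equvialent-matrix-conditions} together with Proposition~\ref{prop:necessary-criterion-mot}; your plan states the conclusion without supplying this mechanism.

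In short: Steps~1--2 are fine and coincide with the paper, but Step~3 as written is a gap. The missing idea is the intermediate matrix-rank criterion---proved on one side by lattice theory in the completion, on the other by explicit $K\{\tau\}$-matrix algebra---which is what actually lets you pass between the completed picture over $K\sls{\sigma}$ and the integral one over $K\{\tau\}$.
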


The beauty of this criterion is not only its theoretical consequence that abelian $t$-modules are $t$-finite and vice versa, but that the criterion provides an algorithm to check the property for any given $t$-module.

\medskip

The Newton polygon of $\lambda_d$ even provides more information. Namely, we show

\begin{mainthm} (Theorem~\ref{thm:purity}) \label{mainthm:purity}
Assume that the $t$-module $E$ is abelian (and hence $t$-finite). The $t$-motive $\mot=\mot(E)$ of $E$ is pure if and only if
the Newton polygon of $\lambda_d$ consists of only one edge. In this case, the weight of $\mot$ is equal to the reciprocal of the slope of this edge. 
\end{mainthm}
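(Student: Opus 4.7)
The plan is to derive a slope decomposition of a suitable extension of $\mot$ from the Smith normal form computed in the proof of Theorem~A, and then to match each slope-$\mu$ summand with a pure component of weight $1/\mu$.

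First, as already used in the proof of Theorem~A, diagonalizing $C = t\one_d - D$ over $K\sls{\sigma}[t]$ yields an isomorphism of $K\sls{\sigma}[t]$-modules
\[
K\sls{\sigma}\otimes_{K\sp{\tau}} \mot \;\cong\; \bigoplus_{i=1}^d K\sls{\sigma}[t]/(\lambda_i).
\]
The divisibility $\lambda_i \mid \lambda_d$ forces the slopes of the Newton polygon of $\lambda_i$ to be a subset (with multiplicities) of those of $\lambda_d$; in particular, $\lambda_d$ carries every slope that appears in the decomposition.

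Second, I would establish a non-commutative slope factorization in the skew polynomial ring $K\sls{\sigma}[t]$: every polynomial $\lambda$ whose Newton polygon has slopes $\mu_1 < \mu_2 < \cdots < \mu_r$ factors (up to units) as $\lambda = \lambda^{(\mu_1)}\cdots\lambda^{(\mu_r)}$ with each factor $\lambda^{(\mu_j)}$ having Newton polygon a single edge of slope $\mu_j$. This is a Hensel-type refinement of the techniques already used for Theorem~A; it can be proved by iteratively splitting off an extremal slope via a fixed-point argument, adapted to the skew setting. Together with a two-sided coprime Chinese-remainder-style statement for the resulting factors, this refines the decomposition above into a direct sum of single-slope cyclic modules indexed by pairs $(i,\mu)$.

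Third, I would identify each single-slope cyclic summand with a pure $t$-motive component. For a factor $\lambda^{(\mu)}$ of single slope $\mu = v/u$ in lowest terms, one constructs an explicit $K$-lattice $L$ inside $K\sls{\sigma}[t]/(\lambda^{(\mu)})$ on which $t^u$ and $\tau^v$ differ only by a unit; this is exactly Anderson's purity condition and yields weight $w = u/v = 1/\mu$. Indeed, on a single-slope cyclic module the $\sigma$-valuation of $t$ must equal $-\mu$ by the defining property of the Newton polygon, so the weight is forced to be $1/\mu$. Conversely, any pure component of $\mot$ of weight $w$ contributes only to the slope-$(1/w)$ summand, since $\ord_\sigma(t) = -1/w$ on such a component.

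Putting the three steps together, $\mot$ is pure if and only if the slope decomposition has a single slope, if and only if $\lambda_d$ (which records all slopes appearing in the decomposition) has a Newton polygon consisting of only one edge; in that case the weight of $\mot$ equals the reciprocal of that slope. The main technical obstacle is Step~2, the non-commutative slope factorization in $K\sls{\sigma}[t]$, together with the precise translation in Step~3 between the $\sigma$-adic picture over $K\sls{\sigma}[t]$ and Anderson's purity condition, which is ordinarily formulated after completing at $1/t$.
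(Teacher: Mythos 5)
Your outline follows essentially the same route as the paper: diagonalize $t\one_d - D$ over $K\sls{\sigma}[t]$, refine to a direct sum of single-slope cyclic pieces, and match each slope $s$ with weight $1/s$. Steps~1 and~2 are carried out in the paper as Theorems~\ref{thm:elementary-divisor-theorem} and \ref{thm:decomposition-in-one-slopes}, which rest on the Newton-polygon slope factorization (Proposition~\ref{prop:factoring-by-slopes}, a fixed-point/Hensel argument exactly as you describe) and a two-sided coprime Chinese-remainder statement (Theorem~\ref{thm:chinese-remainder-theorem}); these are standard once the non-commutative Euclidean algorithm in $\cK[t]$ is in hand and are not the bottleneck.

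The genuine gap is the one you flag at the end but do not fill, and it is larger than you suggest: Anderson's purity is formulated in terms of a $K\ps{\frac{1}{t}}$-lattice in $K\ls{\frac{1}{t}}\otimes_{K[t]}\mot$, while your slope decomposition lives in $K\sls{\sigma}\otimes_{K\{\tau\}}\mot$; nothing in your Steps~1--3 connects the two. The paper proves (Proposition~\ref{prop:iso-of-completions}) that the natural map $K\ls{\frac{1}{t}}\otimes_{K[t]}\mot \to K\sls{\sigma}\otimes_{K\{\tau\}}\mot$ is an isomorphism, and that a $K\sps{\sigma}$-lattice stable under $t^{-1}$ is the same thing as a $K\ps{\frac{1}{t}}$-lattice stable under $\sigma$. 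Its proof uses the abelian hypothesis in an essential way, via Corollary~\ref{cor:positive-slopes} (all slopes positive $\Rightarrow t^{-n}\stlatt\subseteq\sigma\stlatt$ for $n$ large): without that positivity the $t^{-1}$-adic and $\sigma$-adic completions are not comparable, and the heuristic ``$\ord_\sigma(t)=-\mu$'' is literally false as a statement about individual elements (only $t^d$ shifts valuations cleanly, which is the content of Theorem~\ref{thm:stable-O-lattice}\eqref{thm:stable-O-lattice:item:1}). For the converse direction your argument also needs a uniqueness claim you did not state: that on a single-slope cyclic factor the only $(u,v)$ admitting a $\sigma^v t^u$-stable lattice have $v/u$ equal to that slope (Theorem~\ref{thm:stable-O-lattice}\eqref{thm:stable-O-lattice:item:2}); this, applied to the intersection of the pure lattice with each factor, is what forces all factor-slopes to coincide.
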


By \cite[Thm.~5.29(b)]{uh-akj:pthshcff}, for an abelian and $t$-finite $t$-module $E$, its $t$-motive $\mot$ is pure if and only if its dual $t$-motive $\dumot$ is pure. In this case, the weights of $\mot$ and $\dumot$ just differ by the sign. Hence, our theorem implies the analogous statement for the dual $t$-motive (see Corollary~\ref{cor:purity-dual}).

As we will mainly deal with modules over the skew field $K\sls{\sigma}$, and the purity of $\mot$ is a condition on the $K\ls{\frac{1}{t}}$-vector space $K\ls{\frac{1}{t}}\otimes_{K[t]} \mot$, one main part in this task is to show that the scalar extensions $K\ls{\frac{1}{t}}\otimes_{K[t]} \mot$ and $K\sls{\sigma}\otimes_{K\{\tau\}} \mot$ are naturally isomorphic when $E$ is abelian (see Proposition \ref{prop:iso-of-completions}).

\medskip

The paper is structured as follows. Sections \ref{sec:notation} - \ref{sec:matrices} are on linear algebra over complete discretely valued skew fields. Only from Section \ref{sec:t-modules} on, we will deal with $t$-modules.  

We start in Section \ref{sec:notation} with basic notation on complete discretely valued skew fields. Section \ref{sec:newton-polygons} is devoted to Newton polygons and factorization of polynomials over these skew fields. Sections \ref{sec:modules} are on modules over complete discretely valued skew fields and lattices in these modules. Thereafter, we consider matrices over these skew fields in Section \ref{sec:matrices} where we proof an equivalence of several conditions for such matrices (see Theorem~\ref{thm:equvialent-matrix-conditions}). The equivalence of these conditions is a main step in the proof of Theorem \ref{mainthm:equivalence}.
After introducing the notion of $t$-modules, their $t$-motives and their dual $t$-motives in Section \ref{sec:t-modules}, we provide the proof of Theorem~\ref{mainthm:equivalence} in Section \ref{sec:criterion}, and of Theorem~\ref{mainthm:purity} in Section \ref{sec:purity}.
We deduce the equivalence of being abelian and being $A$-finite for general coefficient rings $A$ in Section \ref{sec:general-coefficients}.
Finally in Section \ref{sec:examples}, we illustrate our main theorems by some examples.

Our main sources for non-commutative ring theory are \cite{pmc:sftgdr}, \cite{nj:tr}, and \cite{oo:tncp}.

%Another part of the paper deals with structural properties of the motive and the dual motive of Anderson $t$-modules in general. In these investigations, we also define the notions of pseudo-abelian and almost abelian (resp.~pseudo $t$-finite and almost $t$-finite) $t$-modules which are more general the abelian (and $t$-finite), but still share a lot of properties. 
%Almost abelian $t$-modules naturally occur as extensions of Drinfeld modules by $\Ga$ which are considered for de Rham cohomology.
%In this way, we recover the quasi-periods of Drinfeld modules as periods of almost abelian $t$-modules, and using the connecting isomorphisms in \cite{am:ptmsv}, we can put the fact that they are obtained as values of certain functions into a bigger picture.

\subsection*{Acknowledgement}

We thank the referee for his careful reading of the manuscript and for pointing out two flaws in an earlier version.

\section{Basics in non-commutative algebra}\label{sec:notation}

%Notation
\subsection{Complete discretely valued skew fields}

%\begin{itemize}
%\item $a$ element in $\Fq[t]$,
%\item $\alpha,\beta$ elements in $K$,
%\item $f,g,h$ or $\lambda_1,\lambda_2$ etc. elements in $\cL[t]$ or $\cK[t]$ with coeff. $f_i,g_i,h_i$ or $a_i,b_i$,
%\item $x,y$ elements in $\cL$, $\cO$.
%\end{itemize}
%In Sect.~2-5: $\cL$ discrete valued skew field\\
%in Sect. 6: $\cL=K\sls{\sigma}$

%\subsection{Complete discretely valued skew fields} \ 

We use the following terminology:\\
A \emph{skew field} (or \emph{division ring}) is an associative ring with unit such that every non-zero element has a multiplicative inverse.

\begin{defn}
A \emph{discrete valuation} on a skew field $\cK$ is a map $v:\cK\to \ZZ\cup \{\infty\}$ such that
\begin{enumerate}
\item $v(x)=\infty \Longleftrightarrow x=0$,
\item for all $x,y\in \cK$: $v(x+y)\geq \min \{v(x),v(y)\}$,
\item for all $x,y\in \cK$: $v(xy)=v(x)+v(y)$.
\end{enumerate}
A skew field $\cK$ together with a discrete valuation $v$ on $\cK$ is called a \emph{discretely valued skew field}.

The subset $\mathcal{O}:=\{x\in \cK\mid v(x)\geq 0\}$ of a discretely valued skew field $(\cK,v)$ is called the \emph{valuation ring} of $\cK$. 

An element $\sigma\in \mathcal{O}$ with $v(\sigma)=1$ is called a \emph{uniformizer}.
\end{defn}

\begin{rem}
Of course, if $\cK$ is commutative, this is just the definition of a discretely valued field, and of its valuation ring.
As in the commutative case, we have several conclusions from the definition of a discretely valued skew field $(\cK,v)$ with valuation ring $\mathcal{O}$.
\begin{itemize}
\item $v(1)=0$, and $v(x^{-1})=-v(x)$ for all $x\in \cK\smallsetminus \{0\}$,
\item for all $x,y\in \cK$ such that $v(x)\neq v(y)$: $v(x+y)=\min\{v(x),v(y)\}$,
\item $\mathcal{O}$ is a subring of $\cK$ with unique maximal left (and right) ideal\\ $\mathfrak{m}=\{x\in \cK\mid v(x)> 0\}$,
\end{itemize}
We further remark that conjugate elements have the same valuation, i.e.
\begin{itemize}
\item for all $x,y\in \cK$, $y\neq 0$: $v(y^{-1}xy)=v(x)$,
\item the valuation ring $\mathcal{O}$ is even central in $\cK$, i.e. stable under conjugation,
\item the valuation ring $\mathcal{O}$ is a skew principal ideal domain (cf.~Subsection \ref{subsec:skew-pids}).
\end{itemize}
\end{rem}

\begin{defn}
Let $(\cK,v)$ be a discretely valued skew field. We say that a sequence $(s_n)_{n\geq 0}$ of elements in $\cK$ converges to $s\in \cK$, if
\[\lim_{n\to \infty} v(s-s_n)=\infty.\]
A discretely valued skew field $(\cK,v)$ is called \emph{complete}, if for every sequence $(x_n)_{n\geq 0}$ of elements in $K$ that tends to zero, there is an element $s\in \cK$ such that the sequence $\left(\sum_{i=0}^n x_i\right)_{n\geq 0}$ tends to $s$.
We will denote the limit $s$ by the infinite series $\sum_{i=0}^\infty x_i$.
\end{defn}

\begin{rem}
The definition of being \emph{complete} is just the usual one of metric space, i.e.~that all Cauchy sequences converge, if we equip $\cK$ with the absolute value
\[ \betr{x}=\rho^{v(x)} \text{ for } x\in \cK\smallsetminus \{0\},\quad \betr{0}=0, \]
for some $0<\rho<1$.
\end{rem}

\begin{exmp}\label{exmp:skew-Laurent-series-ring}
The standard example for a complete discretely valued skew field is the skew Laurent series ring $\cK=K\sls{\sigma}$ given in the introduction with valuation $v$ given by
\[ v\left( \sum_{i=i_0}^\infty a_i\sigma^i\right) = \inf\{ i\in \ZZ \mid a_i\neq 0\}. \]
Its valuation ring is the ring $\mathcal{O}=K\sps{\sigma}$ of non-commutative power series, and $\sigma\in \mathcal{O}$ is a uniformizer. 

The other example used in this paper is the Laurent series ring $K\ls{\frac{1}{t}}$ which is even a (commutative) field.
\end{exmp}

\subsection{Skew principal ideal domains}\label{subsec:skew-pids}

%As we need structural theorems for modules over non-commutative principal ideal domains, we introduce them here. As these theorems are valid for left modules and right modules, we omit the prefix.

\begin{defn}
A \emph{skew principal ideal domain} (\emph{skew PID}) is a unital associative ring without zero-divisors such that every left ideal and every right ideal is principal, i.e. generated by one element.
\end{defn}

\begin{exmp}\label{exmp:skew-pids}\ 
\begin{itemize}
\item Every skew field is a skew PID.
\item If $\cK$ is a discretely valued skew field, its valuation ring $\cO$ is a skew PID. Namely, all ideals $\ne (0)$ are given by $\cO\cdot \sigma^s=\sigma^s\cdot \cO$ for a fixed uniformizer $\sigma\in \cO$ and any $s\geq 0$. 
\item The ring $\cK[t]$ of polynomials over a skew field $\cK$ in a central indeterminate $t$ (i.e.~an indeterminate that commutes with all elements in $\cK$) is a skew PID, too.
\end{itemize}

The examples of skew PIDs that we will use from Section \ref{sec:t-modules} on, are the skew field of non-commutative Laurent series $\cL=K\sls{\sigma}$ given in the introduction, as well as its valuation ring -- 
the non-commutative power series ring $\mathcal{O}=K\sps{\sigma}$. Further, we employ its subring of non-commutative polynomials $K\sp{\sigma}$, as well as the polynomial ring $K\sls{\sigma}[t]$ over the skew Laurent series field $K\sls{\sigma}$ in a central indeterminate $t$.
\end{exmp}

\begin{thm}\label{thm:modules-over-pids}
Let $\mathcal{O}$ be a skew PID.
\begin{enumerate}
\item \label{nj:thm16} (see \cite[Ch.~3, Theorem 16]{nj:tr})\\
Any rectangular matrix with entries in $\mathcal{O}$ can be transformed by elementary row and column operations into a matrix in diagonal form, and each diagonal entry is a total divisor (i.e. left divisor and right divisor) of the subsequent diagonal entry.
\item \label{nj:thm17} (Elementary divisor theorem; see \cite[Ch.~3, Proof of Theorem 17]{nj:tr})\\ Let $M$ be a finitely generated free $\mathcal{O}$-module, and let $M'$ be a submodule of $M$. Then there are a basis $\{b_1,\ldots, b_n\}$ of $M$, elements $x_1,\ldots,x_n\in\mathcal{O}$, and $k\leq n$ such that $\{x_1b_1,\ldots, x_kb_k\}$ is a basis for $M'$
(respectively $\{b_1x_1,\ldots, b_kx_k\}$ for right modules).
\item \label{nj:thm18} (see \cite[Ch.~3, Theorem 18]{nj:tr})\\
Every finitely generated module over $\mathcal{O}$ is the direct sum of its torsion submodule and a free submodule.
\item \label{nj:thm19} (see \cite[Ch.~3, Theorem 19]{nj:tr})\\ Every finitely generated module over $\mathcal{O}$ is the direct sum of cyclic submodules.
\end{enumerate}
\end{thm}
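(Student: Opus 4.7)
The plan is to establish (1) first as the main technical statement and then deduce (2)–(4) by arguments that mimic the commutative structure theorem almost verbatim once Smith-like diagonalization is available. Throughout I will work with a skew PID $\mathcal{O}$, invoking on both sides the fact that every one-sided ideal is principal, and also that every two-sided ideal is principal.

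For (1), I would begin by establishing the one-sided Bezout property: for $a,b\in\mathcal{O}$ the right ideal $a\mathcal{O}+b\mathcal{O}$ is principal, so there exist $d,u,v\in\mathcal{O}$ with $au+bv=d$ and $a,b\in d\mathcal{O}$ (and symmetrically on the left). From this one builds an invertible $2\times 2$ matrix over $\mathcal{O}$ — a non-commutative Bezout matrix — taking $\transp{(a,b)}$ to $\transp{(d,0)}$ via elementary row operations, and similarly for columns. Applying such moves, one can clear the first column and then the first row of a given rectangular matrix, leaving a single top-left entry; termination is ensured by observing that the length of the principal right ideal generated by the evolving top-left entry strictly decreases each time a further reduction step is needed. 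Recursion on the remaining block then produces a diagonal form $\diag(d_1,\ldots,d_r,0,\ldots,0)$.

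The main obstacle is promoting ordinary divisibility between consecutive diagonal entries to the stronger \emph{total divisor} condition (divisibility on both sides, and compatibly). In the commutative case this is automatic via gcd manipulations, but over a skew PID the right-gcd and left-gcd of two elements need not coincide, so one must instead use \emph{two-sided} gcds, which exist because the two-sided ideal $\mathcal{O}d_i\mathcal{O}+\mathcal{O}d_{i+1}\mathcal{O}$ is principal. The remedy is a further $2\times 2$ manipulation on each block $\diag(d_i,d_{i+1})$ that replaces it by an equivalent block whose first entry is a two-sided gcd of $d_i$ and $d_{i+1}$, hence a total divisor of the second entry; iterating from left to right yields the whole chain $d_1 \mid d_2 \mid \cdots \mid d_r$ in the total-divisor sense. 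This is the step where I expect most of the care is required.

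Given (1), the remaining items follow rapidly. For (2), I would pick a finite generating set of $M'\subseteq M$ and write the inclusion in a basis of $M$ as a matrix over $\mathcal{O}$; row operations correspond to changing the generating set of $M'$ while column operations correspond to changing the basis of $M$, so the diagonalization from (1) exhibits the desired bases. For (3), present any finitely generated module $N$ as $F/R$ with $F$ free of finite rank, apply (2) to $R\subseteq F$ to identify $N\cong\bigoplus_{i=1}^{k}\mathcal{O}/\mathcal{O}x_i \oplus \mathcal{O}^{n-k}$; the first summand is precisely the torsion part and the second is free, giving the direct-sum decomposition asserted. Finally, (4) is immediate from this same cyclic decomposition, since both the torsion piece and the free piece visibly split into cyclic summands.
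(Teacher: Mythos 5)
The paper does not prove this theorem at all: all four parts are attributed directly to Jacobson's \emph{The Theory of Rings}, so there is no internal argument to compare your sketch against. Your overall plan does follow the classical line Jacobson uses (diagonalize $\to$ elementary divisors of a submodule $\to$ torsion--free splitting $\to$ cyclic decomposition), and given (1) your deductions of (2)--(4) are essentially right, modulo noting in (2) that $M'$ is automatically finitely generated because a skew PID is left and right Noetherian.

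The substantive gap is in your step (1). In a non-commutative ring, a Bezout identity does not by itself produce an invertible $2\times 2$ transformation. From $\mathcal{O}a+\mathcal{O}b=\mathcal{O}d$, $d=ua+vb$, $a=a'd$, $b=b'd$, you get $ua'+vb'=1$, but the natural cofactor completion $\begin{smatrix} u & v \\ -b' & a' \end{smatrix}$ need not be invertible: the would-be inverse $\begin{smatrix} a' & -v \\ b' & u \end{smatrix}$ produces off-diagonal entries $uv-vu$ and $a'b'-b'a'$ that have no reason to vanish over a skew ring. To complete the unimodular row $(u,v)$ to an invertible matrix you essentially need the kernel of $(x,y)\mapsto xa+yb$ to be free of rank one, which is an instance of part (2); so the dependency order (1) $\Rightarrow$ (2) in your plan is circular as stated. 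The classical escape is not an explicit $2\times 2$ completion but a maximality/ACC argument: among all matrices equivalent to the given one, pick one with a minimal $(1,1)$-entry in the divisibility order and argue this entry must then divide every other entry. Your length-function termination argument is consonant with this, but it requires first showing $\mathcal{O}/a\mathcal{O}$ has finite length (atomicity of a skew PID, again via ACC), and it also leaves unaddressed whether \emph{elementary} operations -- as opposed to arbitrary invertible transformations -- suffice, a distinction the statement actually makes and which does not come for free in a general skew PID (it does hold in the Euclidean examples $K\sp{\sigma}$, $K\sps{\sigma}$, $K\sls{\sigma}[t]$ that the paper actually uses).

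The total-divisor promotion is, as you anticipate, the delicate core, and your ``two-sided gcd'' heuristic glosses over a genuine asymmetry: a generator of $\mathcal{O}d_i\mathcal{O}+\mathcal{O}d_{i+1}\mathcal{O}$ viewed as a left principal ideal need not coincide with its generator as a right principal ideal, so there is no single ``two-sided gcd'' that transparently left- \emph{and} right-divides $d_{i+1}$. That step would need to be worked out carefully rather than dispatched as a single $2\times 2$ cleanup.
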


From the first part of the previous theorem, and the characterization of ideals in valuation rings in Ex.~\ref{exmp:skew-pids}, we obtain the following corollary.

\begin{cor}\label{cor:diagonalising-over-valuation-ring}
Let $\mathcal{O}$ be the valuation ring of a discretely valued skew field $\cK$, and $\sigma\in \cO$ a uniformizer. Any rectangular matrix with entries in $\mathcal{O}$ can be transformed by elementary row and column operations into a matrix in diagonal form where the non-zero diagonal entries are of the form $\sigma^{\nu_1},\ldots, \sigma^{\nu_n}$ with $n\geq 0$, and $0\leq \nu_1\leq \ldots\leq \nu_n$.
\end{cor}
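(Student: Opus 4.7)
The plan is to deduce the corollary directly from Theorem~\ref{thm:modules-over-pids}(\ref{nj:thm16}) together with the very explicit description of the (left and right) ideals of $\cO$ recorded in Example~\ref{exmp:skew-pids}. First I would apply Theorem~\ref{thm:modules-over-pids}(\ref{nj:thm16}) to the given rectangular matrix over $\cO$ (which is a skew PID). This yields an invertible product of elementary row and column operations transforming the matrix into a diagonal one whose non-zero diagonal entries $d_{1},\ldots,d_{n}\in\cO$ satisfy: $d_{i}$ is a total divisor of $d_{i+1}$ for $i=1,\ldots,n-1$.

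Next I would rewrite each $d_{i}$ as a power of the uniformizer times a unit. Concretely, set $\nu_{i}:=v(d_{i})\geq 0$. Since $\cO^{\times}=\{x\in\cO\mid v(x)=0\}$, the element $u_{i}:=d_{i}\sigma^{-\nu_{i}}\in\cK$ has valuation $0$, hence lies in $\cO^{\times}$; thus $d_{i}=u_{i}\sigma^{\nu_{i}}$. Multiplying the $i$-th row of the diagonal matrix on the left by $u_{i}^{-1}\in\cO^{\times}$ is an elementary row operation, and it replaces $d_{i}$ by $\sigma^{\nu_{i}}$. Performing this for each $i$ leaves a diagonal matrix with diagonal entries $\sigma^{\nu_{1}},\ldots,\sigma^{\nu_{n}}$ (and possibly some zeros).

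Finally, I would verify the monotonicity $\nu_{1}\leq\nu_{2}\leq\cdots\leq\nu_{n}$. By the total divisibility property from Theorem~\ref{thm:modules-over-pids}(\ref{nj:thm16}), $d_{i}$ divides $d_{i+1}$, so $d_{i+1}\in \cO d_{i}$ (or $d_{i}\cO$). The ideal description in Example~\ref{exmp:skew-pids} gives $\cO d_{i}=\cO\sigma^{\nu_{i}}$ and $\cO d_{i+1}=\cO\sigma^{\nu_{i+1}}$, and the inclusion $\cO\sigma^{\nu_{i+1}}\subseteq \cO\sigma^{\nu_{i}}$ is equivalent to $\nu_{i}\leq \nu_{i+1}$ (using $v(\sigma^{\nu})=\nu$).

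I do not expect any real obstacle here: the only point requiring a brief argument is the factorization $d=u\sigma^{v(d)}$ with $u\in\cO^{\times}$, which is immediate from the multiplicativity of $v$, and the compatibility of the left-ideal description $\cO\sigma^{s}=\sigma^{s}\cO$ with both the left and right versions of total divisibility (so that the same argument works regardless of whether the divisibility in Theorem~\ref{thm:modules-over-pids}(\ref{nj:thm16}) is read on the left or on the right).
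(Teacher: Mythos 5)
Your proof is correct and follows the same approach the paper indicates: apply Theorem~\ref{thm:modules-over-pids}\eqref{nj:thm16} to get a diagonal matrix with total-divisibility among successive entries, then use the description of ideals of $\cO$ from Example~\ref{exmp:skew-pids}. The paper leaves the deduction to the reader; you have simply spelled out the two ingredients — the factorization $d_i = u_i\sigma^{\nu_i}$ with $u_i\in\cO^\times$ (achieved by a unit-scaling row operation) and the translation of divisibility into $\nu_i\leq\nu_{i+1}$ via $\cO\sigma^{\nu_{i+1}}\subseteq\cO\sigma^{\nu_i}$ — both of which are exactly what the paper's one-line justification is pointing at.
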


We close this section by defining what we mean by saying that a matrix $B$ has rank $r$ modulo $\sigma^s$.

\begin{defn}\label{def:rank-modulo-sigma-s}
Let $B$ be a rectangular matrix with entries in the valuation ring $\mathcal{O}$ of a discretely valued skew field $\cK$. Let $\sigma\in \cO$ be a uniformizer, and $s\geq 0$.
Let $\sigma^{\nu_1},\ldots, \sigma^{\nu_n}$ ($n\geq 0$, and $0\leq \nu_1\leq \ldots\leq \nu_n$) be the non-zero diagonal entries obtained by diagonalizing $B$ via elementary row and column operations as in the previous corollary.

We say that \emph{$B$ has rank $r$ modulo $\sigma^s$} if 
\[  \#\{ i\in \{1,\ldots, n\} \mid \nu_i<s \} = r.\]
%i.e., the number of $\nu_i$ that are smaller than $s$ equals $r$. 
\end{defn}

\section{Newton polygons}\label{sec:newton-polygons}

Throughout this section, let $(\cK,v)$ be a complete discretely valued skew field.
Further, let $\cK[t]$ be the polynomial ring over $\cK$ in a central indeterminate $t$, i.e. the indeterminate $t$ commutes with all elements in $\cK$.

In this section, we develop the theory of Newton polygons of polynomials in $\cK[t]$.
It runs parallel to the commutative setting, and the proofs are almost identical. As our coefficient ring is not commutative, there is a ``left-version'' and a ``right-version'' most of the times. However, one version can always be obtained from the other by applying it to the opposite ring of $\cK$. The opposite ring of $\cK$ is, by definition, the same additive group, but with multiplication $*$ given by $x*y:=y\cdot x$.

\begin{defn}
Let $f=\sum_{i=0}^n a_it^i\in \cK[t]$ be a polynomial. The Newton polygon of $f$ is defined to be the 
lower convex hull of the set of points
$P_{i}=\left(i,v(a_{i})\right)$, $i=0,\ldots, n$
ignoring the points with $a_{i}=0$.
We denote the Newton polygon of $f$ by $N_f$.

If $s$ is an edge of the Newton polygon from one vertex $(i,w_i)$ to another vertex $(j,w_j)$, we call the difference $|j-i|$ the \emph{length} of the edge, and the quotient $\frac{w_j-w_i}{j-i}$ the \emph{slope} of the edge. 
A vertex where two edges meet will be called a \emph{break point} of the Newton polygon.
\end{defn}

\begin{exmp}
For $f=a_0+a_1 t+a_2t^2+a_3t^3+t^5$ with $v(a_0)=3$, $v(a_1)=2$ and $v(a_2)=v(a_3)=1$ the Newton polygon $N_f$ consists of two edges. The first edge is of length $2$ with slope $\frac{v(a_2)-v(a_0)}{2}=-1$, and the second of length $3$ with slope $\frac{v(1)-v(a_2)}{3}=-\frac{1}{3}$ (see Figure \ref{fig:1}).

\noindent
\begin{figure}[ht] 
\begin{tikzpicture}[
    scale=0.8,
    axis/.style={thin, ->, >=stealth'},
    every node/.style={color=black},
    ]
	%\scriptsize 
    \tiny

	% Axis 
    \draw[axis] (-1,0)  -- (6,0) ;
    \draw[axis] (0,-1) -- (0,5) ;
    \draw[semithick] (1,-0.1) -- (1,0.1); 
    \draw[semithick] (2,-0.1) -- (2,0.1); 
    \draw[semithick] (3,-0.1) -- (3,0.1); 
    \draw[semithick] (4,-0.1) -- (4,0.1); 

    \draw[semithick] (-0.1,1) -- (0.1,1); 
    \draw[semithick] (-0.1,2) -- (0.1,2); 
    \draw[semithick] (-0.1,3) -- (0.1,3); 
    \draw[semithick] (-0.1,4) -- (0.1,4); 

	% Points
    \draw [fill] (0,3) circle [radius=.081] node [below left] (0,3) {$(0,v(a_0))$};
    \draw [fill] (1,2) circle [radius=.081] node [above right] (1,2) {$(1,v(a_1))$};
    \draw [fill] (2,1) circle [radius=.081] node [below left] (2,1) {$(2,v(a_2))$};
    \draw [fill] (3,1) circle [radius=.081] node [above right] (3,1) {$(3,v(a_3))$};
    \draw [fill] (5,0) circle [radius=.081] node [below] (5,0) {$(5,0)$};

    % Lines
    \draw[thick] (0,3) -- (2,1) -- (5,0); 
\end{tikzpicture} 
\caption{Newton polygon for $\sigma^3+\sigma^2\cdot t+\sigma\cdot t^2+\sigma\cdot t^3+t^4$.}\label{fig:1}
\end{figure}
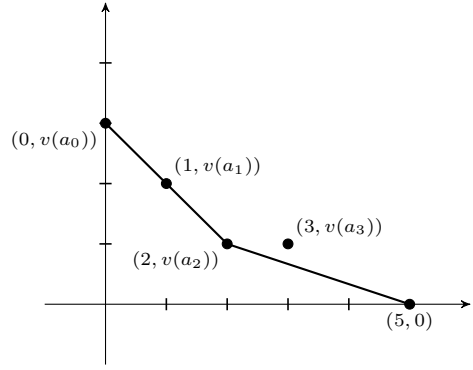
\end{exmp}

\begin{figure}[ht] 
\begin{tikzpicture}[
    scale=0.8,
    axis/.style={thin, ->, >=stealth'},
    every node/.style={color=black},
    ]
	%\scriptsize 
    \tiny

	% Axis 
    \draw[axis] (-1,0)  -- (5,0) ;
    \draw[axis] (0,-1) -- (0,5) ;
    \draw[semithick] (1,-0.1) -- (1,0.1); 
    \draw[semithick] (2,-0.1) -- (2,0.1); 
    \draw[semithick] (3,-0.1) -- (3,0.1); 
    \draw[semithick] (4,-0.1) -- (4,0.1); 

    \draw[semithick] (-0.1,1) -- (0.1,1); 
    \draw[semithick] (-0.1,2) -- (0.1,2); 
    \draw[semithick] (-0.1,3) -- (0.1,3); 
    \draw[semithick] (-0.1,4) -- (0.1,4); 

	% g(t)
    \draw [fill,color=blue] (0,3) circle [radius=.081] ;
    \draw [fill,color=blue] (1,1) circle [radius=.081];
    \draw [fill,color=blue] (4,0) circle [radius=.081] ;
    \draw [thick,color=blue] (0,3) -- (1,1) -- (4,0); 
	% f(t)
    \draw [fill,color=black] (0,1) circle [radius=.081] ;
    \draw [fill,color=black] (1,0) circle [radius=.081] ;
    \draw [thick,color=black] (0,1) -- (1,0); 
	% h(t)=g*f
    \draw [fill,color=green] (0,4) circle [radius=.081] ;
    \draw [fill,color=green] (1,2) circle [radius=.081];
    \draw [fill,color=green] (2,1) circle [radius=.081];
    \draw [fill,color=green] (4,1) circle [radius=.081] ;
    \draw [fill,color=green] (5,0) circle [radius=.081] ;
    \draw [thick,color=green] (0,4) -- (1,2) -- (2,1) -- (5,0); 
    
\end{tikzpicture} 
\caption{Newton polygons for $g(t)=\sigma^3+\theta\sigma\cdot t+t^4$ (blue), $f(t)=\sigma+t$ (black), and 
$h=g\cdot f=\sigma^4+(\theta\sigma^2+\sigma^3)t+\theta\sigma t^2+\sigma t^4+t^5$ (green).}\label{fig:2}
\end{figure}
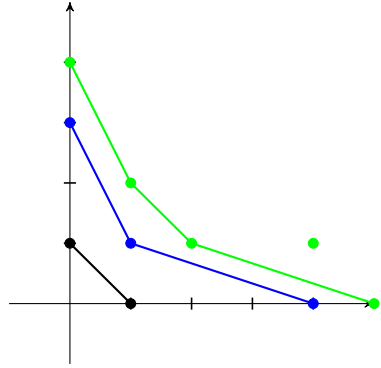

\begin{prop}
Let $f,g\in \cK[t]$ be polynomials. Then the Newton polygon $N_{gf}$ of $g\cdot f$ starts at the vector sum of the starting points of $N_f$ and $N_g$, and ends at the vector sum of the end points of $N_f$ and $N_g$. The edges of the Newton polygon $N_{gf}$ are exactly the edges of both $N_f$ and $N_g$ concatenated in non-decreasing order of their slopes.

The same holds for the product $f\cdot g$.
\end{prop}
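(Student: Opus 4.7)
\medskip

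\textbf{Proof plan.}

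The plan is to run the standard Newton polygon argument from the commutative setting, taking care that the skew-field assumption on $\cK$ is exactly what is needed to avoid vanishing of leading-term products. First I would reduce to the case where both $f$ and $g$ have nonzero constant term: writing $f=t^{i_0}\tilde{f}$ and $g=t^{j_0}\tilde{g}$, using that $t$ is central, the Newton polygons of $f,g,gf$ are just the horizontal translates of those of $\tilde{f},\tilde{g},\tilde{g}\tilde{f}$ by $i_0$, $j_0$ and $i_0+j_0$ respectively. So we may assume $a_0\neq 0$ and $b_0\neq 0$. Then the constant term of $h:=gf$ is $c_0=b_0a_0$, and the leading term is $c_{m+n}=b_na_m$; both are nonzero because $\cK$ has no zero divisors, and they carry the valuations $v(b_0)+v(a_0)$ and $v(b_n)+v(a_m)$. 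This settles the claims about the starting and ending points.

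For the shape, I would pass through the Legendre transform. For $\lambda\in\RR$, set
\[ \mu_\lambda(f):=\min_{i}\bigl(v(a_i)+\lambda i\bigr),\]
and similarly for $g$ and $h$. The collection of values $\mu_\lambda(\cdot)$ for varying $\lambda$ determines the Newton polygon completely: its graph as a piecewise linear concave function of $\lambda$ has break points corresponding to the slopes of the Newton polygon, with horizontal lengths equal to the lengths of the edges. Hence it is enough to show $\mu_\lambda(h)=\mu_\lambda(g)+\mu_\lambda(f)$ for every $\lambda$, because this identifies the Legendre transform of $N_h$ as the sum of those of $N_g$ and $N_f$, equivalently $N_h$ as the infimal convolution of $N_g$ and $N_f$ -- i.e.\ the concatenation of their edges in non-decreasing order of slope.

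The inequality $\mu_\lambda(h)\geq \mu_\lambda(g)+\mu_\lambda(f)$ is immediate from $c_k=\sum_{i+j=k}b_ja_i$ and the ultrametric inequality. For the reverse inequality, I use the usual ``extremal index'' trick: let
\[ i^*:=\max\{i\mid v(a_i)+\lambda i=\mu_\lambda(f)\},\qquad j^*:=\max\{j\mid v(b_j)+\lambda j=\mu_\lambda(g)\},\]
and set $k^*:=i^*+j^*$. For any decomposition $k^*=i+j$, either one of the pairs fails to attain its respective minimum -- in which case the summand $b_ja_i$ has strictly larger valuation than $\mu_\lambda(g)+\mu_\lambda(f)-\lambda k^*$ -- or both attain the minimum, but then $i\leq i^*$ and $j\leq j^*$ together with $i+j=i^*+j^*$ force $(i,j)=(i^*,j^*)$. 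Thus exactly one summand attains the minimal valuation, and since $b_{j^*}a_{i^*}\neq 0$ (again because $\cK$ has no zero divisors), the non-cancellation part of the ultrametric inequality gives $v(c_{k^*})+\lambda k^*=\mu_\lambda(g)+\mu_\lambda(f)$, as required. The argument for $f\cdot g$ is identical with the roles of the two factors exchanged; the only ingredient used on $\cK$ is the absence of zero divisors, which is symmetric.

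I expect the main obstacle to be a purely bookkeeping one: making the reduction ``factor out $t^{i_0}$'' clean on both sides simultaneously (using centrality of $t$), and making the ``unique extremal index'' argument rigorous when coefficients vanish. Once these are stated carefully, all three assertions (start, end, edges) fall out of the single identity $\mu_\lambda(gf)=\mu_\lambda(g)+\mu_\lambda(f)$.
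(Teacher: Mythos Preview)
Your proposal is correct and essentially carries out what the paper only gestures at: the paper's entire proof reads ``The proof is the same as in the commutative case,'' so you are supplying the omitted details. Your function $\mu_\lambda$ is precisely the valuation $v_c$ (with $c=\lambda$) that the paper introduces immediately afterwards in Definition~\ref{def:v_c}, and your identity $\mu_\lambda(gf)=\mu_\lambda(g)+\mu_\lambda(f)$ is the multiplicativity of $v_c$ stated there; so your argument not only proves the proposition but also establishes a fact the paper later asserts without proof.
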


\begin{proof}
The proof is the same as in the commutative case. Also compare the example in Figure \ref{fig:2}.
\end{proof}

As our skew field $\cK$ is complete with respect to the valuation, we will also obtain a reverse result in Prop.~\ref{prop:factoring-by-slopes}. For its proof, however, we need some results on division with remainder in our polynomial ring $\cK[t]$.

\begin{defn}\label{def:v_c}
For $c\in \RR$, we define a valuation $v_c$ on $\cK[t]$ by
\[  v_c\left(\sum_{i=0}^n h_it^i\right) = \min\left\{ v(h_i)+i\cdot c \mid i\in \{0,\ldots, n\} \right\}\in \RR\cup \{\infty\}.\]
It is not hard to verify that this is indeed a valuation, i.e.~it satisfies
\begin{itemize}
\item $v_c(f)=\infty \Longleftrightarrow f=0$,
\item $v_c(f\cdot g)=v_c(f)+v_c(g)$,
\item $v_c(f+g)\geq \min\{ v_c(f),v_c(g)\}$.
\end{itemize}
\end{defn}

\begin{lem} (Right-division with remainder)\\
Let $c\in \RR$ be arbitrary. Let $h,f\in \cK[t]$ with $\deg_t(h)\geq \deg_t(f)$.
\begin{enumerate}
\item There exist unique $q,r\in \cK[t]$ such that 
\[ h=q\cdot f+r, \]
and either $r=0$ or $\deg_t(r)<\deg_t(f)$.
\item \label{lem:item:2} Assume further that $v_c(f)=v(f_d)+d\cdot c$, where $d=\deg_t(f)$ and $f_d$ is the leading coefficient of $f$. Then
\[ v_c(q)\geq v_c(h)-v_c(f)\quad \text{and}\quad v_c(r)\geq v_c(h). \]
\end{enumerate}
\end{lem}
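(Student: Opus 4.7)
The plan is to prove (1) by the standard polynomial division algorithm, which carries over verbatim because $\cK$ is a skew field and hence the leading coefficient $f_d$ of $f$ is invertible. For existence, I would induct on $n := \deg_t(h)$. If $n < d := \deg_t(f)$, take $q = 0$ and $r = h$. Otherwise, setting $q_1 := h_n f_d^{-1} t^{n-d}$ where $h_n$ is the leading coefficient of $h$, the centrality of $t$ in $\cK[t]$ gives that the leading term of $q_1 \cdot f$ equals $h_n f_d^{-1} f_d\, t^n = h_n t^n$, so $h' := h - q_1 f$ has $\deg_t(h') < n$; induction applied to $h'$ then produces $q', r$ with $h' = q' f + r$ and the required degree bound on $r$, and one finishes by setting $q = q_1 + q'$. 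Uniqueness again uses the invertibility of $f_d$: if $q_1 f + r_1 = q_2 f + r_2$ with $q_1 \neq q_2$, the leading coefficient of $(q_1 - q_2) \cdot f$ is a nonzero element times $f_d$, hence nonzero, forcing $\deg_t((q_1 - q_2) f) \geq d > \deg_t(r_2 - r_1)$, a contradiction.

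For (2), I would carry valuation bounds along the very same induction. The base case $n < d$ gives $q = 0$, $r = h$, and the bounds $v_c(q) = \infty$ and $v_c(r) = v_c(h)$ hold trivially. For the inductive step, a direct computation yields
\[ v_c(q_1) \;=\; v(h_n) - v(f_d) + (n-d)c \;=\; \bigl(v(h_n) + nc\bigr) - v_c(f), \]
where the hypothesis $v_c(f) = v(f_d) + dc$ is used in the last equality. Since $v(h_n) + nc \geq v_c(h)$ by the definition of $v_c$, we obtain $v_c(q_1) \geq v_c(h) - v_c(f)$. The multiplicativity of $v_c$ then gives $v_c(q_1 f) = v(h_n) + nc \geq v_c(h)$, and therefore $v_c(h') \geq \min\{v_c(h), v_c(q_1 f)\} \geq v_c(h)$. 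Applying the induction hypothesis to $h'$ furnishes $q', r$ with $v_c(q') \geq v_c(h') - v_c(f) \geq v_c(h) - v_c(f)$ and $v_c(r) \geq v_c(h') \geq v_c(h)$. Setting $q = q_1 + q'$, the ultrametric inequality for $v_c$ gives $v_c(q) \geq v_c(h) - v_c(f)$, completing the induction.

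The whole argument is essentially a verbatim transcription of the commutative proof; the one place where the hypothesis of part (2) genuinely enters is the identity $v_c(q_1 f) = v(h_n) + nc$, without which the crucial bound $v_c(h') \geq v_c(h)$ could fail and the induction would break down. I do not expect any real obstacle here — the only care needed is to keep $f_d^{-1}$ on the \emph{right} of $h_n$ in the formula for $q_1$ and to exploit that $t$ is central in $\cK[t]$, so that the classical leading-term cancellation step remains valid in the skew setting.
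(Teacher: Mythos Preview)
Your proof is correct and follows essentially the same strategy as the paper: both track the valuation $v_c$ through the division algorithm, using the hypothesis $v_c(f)=v(f_d)+dc$ at the key step to ensure the valuation does not drop. The only cosmetic difference is that the paper organizes the bookkeeping as a backwards induction on the coefficient index $i$ of $q=\sum_i q_it^i$ (solving the equations $h_e=q_{e-d}f_d+\sum_{j<d}q_{e-j}f_j$ for $q_{e-d}$ from the top down), whereas you run a forward induction on $\deg_t(h)$ by peeling off one leading term at a time; these are two presentations of the same computation.
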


\begin{proof}
The existence of $q$ and $r$ is shown in \cite[Sect.~2]{oo:tncp}, and uniqueness is obtained in the same way as in the commutative case. So it remains to show the bounds on the valuations of $q$ and $r$ in \eqref{lem:item:2}.\\
Write $h=\sum_{k=0}^n h_kt^k$ where $n=\deg_t(h)$, $f=\sum_{j=0}^d f_jt^j$, and
$q=\sum_{i=0}^{n-d} q_it^i$. As $\deg_t(r)<d$, we obtain for all $e>d$:
\begin{equation}\label{eq:h_e}
 h_e=\sum_{i+j=e} q_if_j=q_{e-d}\cdot f_d+\sum_{j=0}^{d-1} q_{e-j}f_j. 
\end{equation}
We show $v(q_i)+i\cdot c\geq v_c(h)-v_c(f)$ for all $i=n-d,\ldots, 0$ by backwards induction which then implies $v_c(q)=\min \{v(q_i)+i\cdot c \mid 0\leq i\leq n-d\}\geq v_c(h)-v_c(f)$.\\
For $i=n-d$, set $e=n$ in equation \eqref{eq:h_e}. It reduces to $h_n= q_{n-d}\cdot f_d$, and therefore
\[ v(q_{n-d})=v(h_n)-v(f_d)=v(h_n)-v_c(f)-d\cdot c\geq v_c(h)+n\cdot c-v_c(f)-d\cdot c=
v_c(h)-v_c(f)-(n-d)\cdot c.\]
Now, fix $i<n-d$, and assume by induction hypothesis that we have shown the inequality for all larger indices. Then setting $e=i+d$ in equation \eqref{eq:h_e}, we obtain
\begin{eqnarray*}
v(q_{i})&=& v\left(h_{i+d}-\sum_{j=0}^{d-1} q_{i+d-j}f_j\right)-v(f_d) \\
&\geq & \min \{ v(h_{i+d}), v(q_{i+d-j})+v(f_j) \mid j=0,\ldots, d-1\} - v(f_d) \\
%&\geq & \min \{ v_c(h)-(i+d)\cdot c, v_c(h)-v_c(f)-(i+d-j)\cdot c+v_c(f)-j\cdot c \mid j=0,\ldots, d-1\} - v(f_d) \\
&\geq & \min \{ v_c(h)-(i+d)\cdot c, v_c(h)-(i+d-j)\cdot c-j\cdot c \mid j=0,\ldots, d-1\} - v(f_d) \\
&=&  v_c(h)-(i+d)\cdot c - v_c(f)+d\cdot c = v_c(h)-v_c(f)-i\cdot c.
\end{eqnarray*}
The bound for the valuation of $r$ is directly computed as
\[ v_c(r)=v_c\left(h-q\cdot f\right)\geq \min\{ v_c(h), v_c(q)+v_c(f)\}\geq v_c(h),\]
using the bound for $v_c(q)$ just obtained.
\end{proof}

By the same arguments with sides swapped, we obtain the result on left-division with remainder.

\begin{lem}\label{lem:left-division-with-remainder} (Left-Division with remainder)\\
Let $c\in \RR$ be arbitrary. Let $h,f\in \cK[t]$ with $\deg_t(h)\geq \deg_t(f)$.
\begin{enumerate}
\item There exist unique $q,r\in \cK[t]$ such that 
\[ h=f\cdot q+r, \]
and either $r=0$ or $\deg_t(r)<\deg_t(f)$.
\item \label{lem:item:2l} Assume further that $v_c(f)=v(f_d)+d\cdot c$ where $d=\deg_t(f)$ and $f_d$ is the leading coefficient of $f$. Then
\[ v_c(q)\geq v_c(h)-v_c(f)\quad \text{and}\quad v_c(r)\geq v_c(h). \]
\end{enumerate}
\end{lem}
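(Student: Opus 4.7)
The plan is to mirror the proof of the right-division lemma, with all multiplications reversed. Equivalently, one can pass to the opposite ring $\cK^{\rm op}$: the polynomial ring $\cK^{\rm op}[t]$ has the same underlying additive structure as $\cK[t]$ (the indeterminate $t$ being central), and the valuation $v$ on $\cK^{\rm op}$ agrees with $v$ on $\cK$; then left-division $h = f \cdot q + r$ in $\cK[t]$ corresponds to right-division $h = q * f + r$ in $\cK^{\rm op}[t]$, and $v_c$ is the same map. So the first lemma, applied in $\cK^{\rm op}[t]$, gives existence, uniqueness, and the valuation bounds immediately.

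If instead one prefers a direct argument, I would first establish existence and uniqueness of $q, r$ with $h = f \cdot q + r$ by the usual left-division algorithm in $\cK[t]$, which terminates because $f_d$ is invertible in the skew field $\cK$; uniqueness follows from the fact that if $f \cdot q_1 + r_1 = f \cdot q_2 + r_2$ with both remainders of degree $< d$, then $f \cdot (q_1 - q_2) = r_2 - r_1$, and comparing degrees forces $q_1 = q_2$ and $r_1 = r_2$.

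For the valuation bounds, write $h = \sum_{k=0}^{n} h_k t^k$, $f = \sum_{j=0}^{d} f_j t^j$, and $q = \sum_{i=0}^{n-d} q_i t^i$. Since $\deg_t(r) < d$, comparing coefficients for each $e > d$ yields
\[ h_e = \sum_{i+j = e} f_j q_i = f_d \cdot q_{e-d} + \sum_{j=0}^{d-1} f_j \cdot q_{e-j}. \]
I would then prove $v(q_i) + i \cdot c \geq v_c(h) - v_c(f)$ by backwards induction on $i$, starting at $i = n-d$ (where the identity $h_n = f_d \cdot q_{n-d}$ and the assumption $v_c(f) = v(f_d) + d \cdot c$ give the base case directly) and propagating downward via the displayed equation, exactly as in the right-division case; the ultrametric inequality combined with the induction hypothesis yields the required bound. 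Taking the minimum over $i$ gives $v_c(q) \geq v_c(h) - v_c(f)$, and then $v_c(r) = v_c(h - f \cdot q) \geq \min\{v_c(h), v_c(f) + v_c(q)\} \geq v_c(h)$.

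The only mildly delicate point is that one must keep $f_j$ on the \emph{left} of $q_i$ throughout (so that the factorization $h_e = f_d \cdot q_{e-d} + \cdots$ is correct and the leading-coefficient step solves for $q_{e-d}$ on the right of $f_d$); since $v$ is an additive homomorphism on non-zero elements regardless of factor order, no arithmetic inequality changes, and the induction proceeds verbatim.
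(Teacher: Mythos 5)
Your proposal is correct and matches the paper's approach: the paper dispatches this lemma with the one-line remark ``by the same arguments with sides swapped,'' which is exactly your direct mirrored induction, and your opposite-ring packaging is the formalization of that remark that the paper itself already flags at the start of Section~\ref{sec:newton-polygons}. The one step worth keeping explicit, and which you do flag, is that the leading-coefficient equation $h_n = f_d\cdot q_{n-d}$ is solved by multiplying by $f_d^{-1}$ on the left, while the valuation estimates are unaffected by the order of factors since $v$ is additive on products.
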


\begin{prop}\label{prop:factoring-by-slopes}
Let $h\in \cK[t]$, and assume that its Newton polygon has at least two edges, and that $P=(d,e)$ is the first break point of the Newton polygon of $h$ (i.e. the break point with the least $x$-coordinate of all break points). Then the following hold: 
\begin{enumerate}
\item There are polynomials $f,g\in \cK[t]$ such that $\deg_t(f)=d$, the Newton polygon of $f$ consists of the first edge of $N_h$, and $h=f\cdot g$.
\item There are polynomials $f,g\in \cK[t]$ such that $\deg_t(f)=d$, the Newton polygon of $f$ consists of the first edge of $N_h$, and $h=g\cdot f$.
\end{enumerate}
\end{prop}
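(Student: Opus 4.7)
The strategy is a Hensel-type successive approximation in the valuation $v_c$ of Definition~\ref{def:v_c}, with $c := -s_1$ the negative of the slope of the first edge of $N_h$. The hypothesis that $(d,e)$ is the first break point translates into a quantitative statement: $v_c(h) = e + dc = v(h_0)$ is attained precisely by those indices $i$ for which $(i, v(h_i))$ lies on the first edge (in particular by $i = 0$ and $i = d$), and there exists $\delta > 0$ such that $v(h_i) + ic \geq v_c(h) + \delta$ for every $i > d$ with $h_i \neq 0$. I plan to prove part~(1) directly and then deduce part~(2) by applying part~(1) to the opposite ring $\cK^{\rm op}$: reversing the multiplication converts a left factorization in $\cK^{\rm op}[t]$ into a right factorization in $\cK[t]$, while leaving the valuation $v$, the Newton polygon and the value $v_c$ of a polynomial unchanged. (This is the same sides-swapped trick already invoked by the author in deducing Lemma~\ref{lem:left-division-with-remainder} from its right-division counterpart.)

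For part~(1), I would construct $f$ of degree $d$ with Newton polygon equal to the first edge of $N_h$ as a $v_c$-limit of successive approximations $f_n$, the companion $g$ being the exact left-quotient of $h$ by $f$ (whose remainder will vanish in the limit). The initial approximation $f_0 := \sum_{i=0}^{d} h_i t^i$ has the correct degree and Newton polygon, and its leading coefficient attains the minimum $v_c(f_0) = v(h_d) + dc$, so Lemma~\ref{lem:left-division-with-remainder}(\ref{lem:item:2l}) is applicable with divisor $f_0$. Inductively, given $f_n$ with the same structural invariants, left-divide $h$ by $f_n$: $h = f_n g_n + r_n$ with $\deg r_n < d$. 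The lemma and the initial $\delta$-gap yield $v_c(g_n) \geq v_c(h) - v_c(f_n)$ and $v_c(r_n) \geq v_c(h) + \delta$ for $n = 0$. I would then define $f_{n+1} := f_n + \eta_n$ for a correction $\eta_n \in \cK[t]$ of degree $< d$, chosen so that the new remainder $r_{n+1}$ — obtained by left-dividing $h$ by $f_{n+1}$ — satisfies $v_c(r_{n+1}) \geq v_c(r_n) + \delta$, and so that the invariants on $f_{n+1}$ (degree, Newton polygon, leading-coefficient minimum) are preserved because $v_c(\eta_n)$ is strictly positive relative to the relevant thresholds.

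The key step, and in my view the main technical obstacle, is precisely the existence of the correction $\eta_n$ with a uniform $v_c$-bound. A first-order analysis shows that $\eta_n$ must satisfy the Bezout-type congruence
\[ \eta_n \cdot g_n \;\equiv\; -r_n \pmod{f_n \cdot \cK[t]}, \]
with $\deg \eta_n < d$ and $v_c(\eta_n) \geq v_c(r_n) - v_c(g_n) - C$ for a constant $C$ independent of $n$. The solvability amounts to showing that right-multiplication by $g_n$ induces a $\cK$-linear bijection on the rank-$d$ free left $\cK$-module $\cK[t]/f_n \cdot \cK[t]$, with controlled $v_c$-distortion. In the commutative case this is the coprimality of $f_n$ and $g_n$, which follows from the disjointness of the slope ranges of their Newton polygons. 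In the non-commutative case, one must establish the corresponding statement, presumably by analysing a Sylvester-type matrix for $(f_n, g_n)$ over $\cK$ whose entries can be bounded in $v$ using the slope data, and applying the diagonalisation theory for matrices over the skew PID $\cO$ (Theorem~\ref{thm:modules-over-pids}(\ref{nj:thm16}) and Corollary~\ref{cor:diagonalising-over-valuation-ring}) to extract the valuation of its ``determinant''. Once this is in hand, $(f_n)$ is Cauchy in $v_c$, its limit $f$ inherits the structural invariants, the remainders $r_n$ tend to $0$, and therefore $h = f \cdot g$ with $g$ the corresponding left-quotient.
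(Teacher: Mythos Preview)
Your overall strategy---Hensel-type approximation in the valuation $v_c$ with $c$ the negative of the first slope, initial approximation $f_0=\sum_{i\le d} h_it^i$, and deducing part~(2) from part~(1) via the opposite ring---matches the paper's exactly. The gap is in the iteration itself, precisely at the step you flag as the ``main technical obstacle'': solving $\eta_n g_n\equiv -r_n \pmod{f_n\cK[t]}$ with uniform $v_c$-control. Your proposed route through a Sylvester-type matrix and Corollary~\ref{cor:diagonalising-over-valuation-ring} is only a sketch (over a skew field there is no determinant, and extracting a uniform distortion bound on the inverse from invariant-factor data is not routine); you have not actually carried it out, and the word ``presumably'' signals that you know this.

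The paper sidesteps this obstacle entirely by a different update rule. Rather than recomputing $g_n$ as the exact left-quotient of $h$ by $f_n$ at each step, it maintains an independent approximation $g^{(i)}$ (starting from $g^{(0)}=1$) and left-divides the \emph{current error} $h-f^{(i-1)}g^{(i-1)}$ by $f^{(i-1)}$, obtaining quotient $q$ and remainder $r$. Setting $f^{(i)}:=f^{(i-1)}+r$ and $g^{(i)}:=g^{(i-1)}+q$ yields the one-line identity
\[
h-f^{(i)}g^{(i)} \;=\; r\cdot\bigl(1-g^{(i)}\bigr),
\]
whence $v_c(h-f^{(i)}g^{(i)})\ge v_c(r)+v_c(1-g^{(i)})\ge v_c(h)+\alpha(i+1)$, where $\alpha>0$ is your $\delta$. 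No congruence needs to be solved and no coprimality argument is required: the correction to $f$ is simply the remainder $r$, the correction to $g$ is the quotient $q$, and the bounds on $v_c(q)$ and $v_c(r)$ come straight from Lemma~\ref{lem:left-division-with-remainder}\eqref{lem:item:2l}.
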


\begin{rem}\label{rem:right-factor-for-each-edge}
By repeating the previous process for the factor $g$, we eventually get a factorization of $h$ as $h=f_1\cdot f_2\cdots f_k$ where the Newton polygon of each $f_j$ has exactly one edge, each corresponding to one edge of $N_h$, i.e.~having the same length and same slope as that edge of $N_h$.
Furthermore, for each permutation of the $k$ edges of $h$, we get such a factorization. 

In particular, for each edge of $N_h$ there is a right factor $\mu$ of $h$ whose Newton polygon has exactly one edge, and this edge has the same length and same slope as that edge of $N_h$.
\end{rem}

\begin{proof}[Proof of Prop.~\ref{prop:factoring-by-slopes}]
We only prove the first part, as the second is obtained in the same way after swapping sides in products.
Let $s$ be the slope of the first edge, set $c:=-s$, and consider the valuation $v_c$ given in Definition \ref{def:v_c}.

Let $h=\sum_{j=0}^n h_jt^j$, and define $f^{(0)}=\sum_{j=0}^d h_jt^j\in \cK[t]$ as well as $g^{(0)}=1\in \cK[t]$.
The choice of $c$ implies that $v_c(h)=v(h_0)=v(h_d)+d\cdot c$, that $v(h_j)+j\cdot c\geq v_c(h)$ for all $0<j<d$, and that $v(h_j)+j\cdot c>v_c(h)$ for all $j>d$.
Hence, $v_c(f^{(0)})=v_c(h)$, and
\[v_c\left(h-f^{(0)}\cdot g^{(0)}\right)=v_c\left(\sum_{j=d+1}^n h_jt^j\right) =v_c(h)+\alpha \]
for some $\alpha>0$.

Inductively, we are going to define sequences $(f^{(i)})_{i\geq 0}$ and $(g^{(i)})_{i\geq 0}$ of polynomials in $\cK[t]$ such that
\begin{enumerate}
\item[a)] $\deg_t(f^{(i)})=d$ with leading coefficient $h_d$, $v_c(f^{(i)})=v_c(h)$, and for all $i\geq 1$: $v_c(f^{(i)}-f^{(i-1)})\geq v_c(h)+\alpha\cdot i$.
\item[b)] $\deg_t(g^{(i)})\leq n-d$, $v_c(1-g^{(i)})\geq \alpha$ and for all $i\geq 1$: $v_c(g^{(i)}-g^{(i-1)})\geq \alpha\cdot i$.
\item[c)] $v_c\left(h-f^{(i)}\cdot g^{(i)}\right)\geq  v_c(h)+\alpha\cdot (i+1)$.
\end{enumerate}
As the skew field $\cK$ is complete, this implies that the sequences $(f^{(i)})_{i\geq 0}$ and $(g^{(i)})_{i\geq 0}$ converge in $\cK[t]$
to some polynomials $f$ and $g$. By the conditions on the sequences, the polynomials $f$ and $g$ satisfy the conditions $\deg_t(f)=d$, $\deg_t(g)\leq n-d$, and $h=f\cdot g$.
Finally, condition a) implies
$v_c(f-f^{(0)})>v_c(f^{(0)})$, hence the Newton polygon of $f$ is the same as the one of $f^{(0)}$, namely the first edge of $N_h$.

By choice of $f^{(0)}$ and $g^{(0)}$, the conditions a) - c) are fulfilled for $i=0$. Now take $i>0$, and assume that
 $f^{(i-1)}$ and $g^{(i-1)}$ are already constructed. Then the hypothesis of Lemma \ref{lem:left-division-with-remainder}\eqref{lem:item:2l} (left division with remainder) are fulfilled, and we obtain $q,r\in \cK[t]$ such that
 \[  h-f^{(i-1)}\cdot g^{(i-1)} = f^{(i-1)}\cdot q+r, \]
 with the given bounds on the valuations of $q$ and $r$. We set
\[  f^{(i)}:= f^{(i-1)}+r,\qquad g^{(i)}:= g^{(i-1)}+q,\]
and will verify that these polynomials satisfy the conditions a) - c).

As $\deg_t(r)<\deg_t(f^{(i-1)})=d$, we have $\deg_t(f^{(i)})=\deg_t(f^{(i-1)}+r)=d$, and the leading coefficient of $f^{(i)}$ is again $h_d$. Further, by the bounds on the coefficients in the left division with remainder
\[ v_c(f^{(i)}-f^{(i-1)})=v_c(r)\geq v_c\left( h-f^{(i-1)}\cdot g^{(i-1)}\right)\geq v_c(h)+\alpha \cdot i. \]
The latter also implies $v_c(f^{(i)})=\min \{ v_c(f^{(i-1)}), v_c(f^{(i)}-f^{(i-1)})\}=v_c(h)$.
This proves condition a). As $\deg_t(q)= n-d$, and $\deg_t(g^{(i-1)})\leq n-d$, also $\deg_t(g^{(i)})=\deg_t(g^{(i-1)}+q)\leq n-d$, and
\[ v_c(g^{(i)}-g^{(i-1)})=v_c(q)\geq v_c\left( h-f^{(i-1)}\cdot g^{(i-1)}\right)-v_c(f^{(i-1)})\geq  v_c(h)+\alpha \cdot i-v_c(f^{(i-1)})
=\alpha \cdot i.\]
Therefore, also
\[ v_c(1-g^{(i)})\geq \min\{ v_c(1-g^{(i-1)}), v_c(g^{(i-1)}-g^{(i)})\}\geq \alpha.\]
Finally,
\[  h-f^{(i)}\cdot g^{(i)}=h-f^{(i-1)}\cdot g^{(i)}-r\cdot g^{(i)}=r-r\cdot g^{(i)}=r\cdot (1-g^{(i)}),\]
and hence
\[v_c\left(  h-f^{(i)}\cdot g^{(i)}\right)=v_c(r)+v_c(1-g^{(i)})\geq v_c(h)+\alpha \cdot i+\alpha=v_c(h)+\alpha \cdot (i+1).\]
\end{proof}

Using the previous proposition, we have the following non-commutative variants of the Chinese Remainder Theorem
\begin{thm}\label{thm:chinese-remainder-theorem}
\begin{enumerate}
\item Let $h\in \cK[t]$ be a polynomial whose Newton polygon consists of exactly $k$ edges, and let $\mu_1,\ldots, \mu_k$ be right factors of $h$ each corresponding to a different edge, then there is a canonical isomorphism of left $\cK[t]$-modules
\[ \cK[t]/h\cK[t] \cong \cK[t]/\mu_1\cK[t]\oplus \ldots \oplus\cK[t]/\mu_k\cK[t]. \]
\item Let $h\in \cK[t]$ be a polynomial whose Newton polygon consists of exactly $k$ edges, and let $\nu_1,\ldots, \nu_k$ be left factors of $h$ each corresponding to a different edge, then there is a canonical isomorphism of right $\cK[t]$-modules
\[ \cK[t]h\backslash \cK[t] \cong \cK[t]\nu_1\backslash \cK[t]\oplus \ldots \cK[t]\nu_k\backslash \oplus\cK[t]. \]
\end{enumerate}
\end{thm}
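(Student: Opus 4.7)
I focus on part (1); part (2) follows by applying (1) to the opposite ring $\cK^{\mathrm{op}}$, under which right factors become left factors and left modules become right modules.

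Consider the canonical diagonal map of left $\cK[t]$-modules
\[
\Phi\colon \cK[t]/\cK[t]h \longrightarrow \bigoplus_{i=1}^{k} \cK[t]/\cK[t]\mu_i, \qquad \bar p \longmapsto (\bar p,\dots,\bar p),
\]
which is well-defined because each $\mu_i$ is a right factor of $h$, so $\cK[t]h \subseteq \cK[t]\mu_i$. The plan is to prove that $\Phi$ is injective and that source and target have the same finite $\cK$-dimension; together these force $\Phi$ to be an isomorphism.

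For the dimension count, right-division with remainder shows that $\{1,t,\dots,t^{\deg_t p-1}\}$ is a $\cK$-basis of $\cK[t]/\cK[t]p$ for any nonzero $p$. Assuming $h(0)\neq 0$ (otherwise factor out the central element $t^{\ord_t h}$ and reduce to this case), each right factor $\mu_i$ also satisfies $\mu_i(0)\neq 0$, so $N_{\mu_i}$ starts at $x=0$ and $\deg_t\mu_i$ equals the length of the $i$-th edge of $N_h$. Summing gives $\sum_i \deg_t\mu_i = \deg_t h$, matching the $\cK$-dimension of the source.

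For injectivity, $\ker\Phi = \bigcap_i \cK[t]\mu_i \big/ \cK[t]h$. Since $\cK[t]$ is a skew PID (Ex.~\ref{exmp:skew-pids}), write $\bigcap_i \cK[t]\mu_i = \cK[t]m$. From $h\in\cK[t]m$ one has $\deg_t m\leq\deg_t h$, and it suffices to establish the reverse inequality; then $m$ and $h$ are associates and $\cK[t]m = \cK[t]h$. Writing $m=q_i\mu_i$ for each $i$ and applying the product rule for Newton polygons (the unlabeled proposition preceding Prop.~\ref{prop:factoring-by-slopes}), $N_m$ is the sorted concatenation of $N_{q_i}$ and $N_{\mu_i}$, so the edge of $N_m$ with slope $s_i$ -- the unique slope of $N_{\mu_i}$ -- has length at least $\deg_t\mu_i$. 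As $s_1,\dots,s_k$ are pairwise distinct, these contributions lie on distinct edges of $N_m$, and summing gives $\deg_t m \geq \sum_i \deg_t\mu_i = \deg_t h$.

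The crux is the Newton polygon estimate $\deg_t m\geq\deg_t h$ for the least common left multiple $m$: it hinges on the slopes $s_i$ being pairwise distinct, so the contributions of different $\mu_i$'s cannot merge into a single edge of $N_m$. The remaining ingredients -- the division algorithm for the dimension count, and the skew PID structure to obtain $m$ -- are standard.
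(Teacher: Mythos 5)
Your proof is correct, but it takes a genuinely different route from the paper's. The paper proves the statement by reducing to two factors (splitting off a single edge from the rest), observing that distinct slopes force the right greatest common divisor of the two factors to be a unit, invoking Ore's Bezout identity $r\lambda_1+s\mu=1$ to write down an explicit inverse $(f,g)\mapsto fs\mu+gr\lambda_1$ of the diagonal map, and then inducting on the number of edges. You instead handle all $k$ factors at once by a dimension count plus a degree bound on the least common left multiple: both sides have left $\cK$-dimension $\deg_t h$ (via right division with remainder and the edge-length tally), and the kernel $\bigl(\bigcap_i\cK[t]\mu_i\bigr)/\cK[t]h=\cK[t]m/\cK[t]h$ is trivial because multiplicativity of Newton polygons forces $N_m$ to contain, for each $i$, an edge of slope $s_i$ of length at least $\deg_t\mu_i$, and the $s_i$ being pairwise distinct makes these contributions disjoint, so $\deg_t m\geq\sum_i\deg_t\mu_i=\deg_t h$ while $m$ right-divides $h$. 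Both arguments pivot on the same fact — distinctness of slopes — but exploit it differently: the paper through right coprimality and Bezout, you through the lcm degree estimate. Your approach is non-constructive and uniform in $k$; the paper's yields an explicit inverse. One loose end worth noting: the preliminary reduction to $h(0)\neq 0$ by "factoring out $t^{\ord_t h}$" is not itself a direct-sum reduction until you already have a Chinese Remainder statement separating $t^{e}$ from the rest; but since a power of $t$ contributes no edges to $N_h$ this is a degenerate situation that the theorem's own phrasing also leaves implicit, and your main argument is sound in the case $h(0)\neq 0$, which is what actually arises in the paper's applications.
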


This follows inductively from the following lemma.

\begin{lem}
Let $h\in \cK[t]$ be a polynomial with the first break point of the Newton polygon $N_h$ being at $P=(d,e)$. Let
$h=\nu\cdot \lambda_1$ and $h=\lambda_2\cdot \mu$ be factorizations of $h$ from Proposition \ref{prop:factoring-by-slopes} where the Newton polygons of 
$\nu$ and $\mu$ consist of the first edge of $h$. Then the canonical homomorphism of left $\cK[t]$-modules
\[ \cK[t]/h\cK[t] \cong \cK[t]/\lambda_1\cK[t]\oplus \cK[t]/\mu\cK[t], \]
is an isomorphism,
as well as the canonical homomorphism of right $\cK[t]$-modules
\[ \cK[t]h\backslash \cK[t] \cong \cK[t]\nu\backslash \cK[t]\oplus \cK[t]\lambda_2\backslash \cK[t] \]
is an isomorphism.
\end{lem}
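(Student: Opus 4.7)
My approach is to prove the isomorphism for left modules via a dimension count after establishing a Bezout-type coprimality; the right module version follows by a mirror argument with sides interchanged.

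The canonical map
\[ \cK[t]/h\cK[t]\longrightarrow \cK[t]/\lambda_1\cK[t]\oplus \cK[t]/\mu\cK[t] \]
is well-defined because the factorizations $h=\nu\lambda_1$ and $h=\lambda_2\mu$ place $h$ inside the ideals associated with $\lambda_1$ and $\mu$, respectively. Counting $\cK$-dimensions gives $\deg h=n$ on the source and $\deg\lambda_1+\deg\mu=(n-d)+d=n$ on the target. Hence it suffices to prove surjectivity, which reduces to the coprimality $\cK[t]\lambda_1+\cK[t]\mu=\cK[t]$: granted a Bezout identity $1=a\lambda_1+b\mu$, the element $f=f_2a\lambda_1+f_1b\mu$ maps to any prescribed pair $(f_1,f_2)$, since the first summand vanishes mod $\cK[t]\lambda_1$ while $f_1b\mu=f_1(1-a\lambda_1)\equiv f_1$ mod $\cK[t]\lambda_1$, with the symmetric computation on the other factor.

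The heart of the proof is therefore this coprimality. Since $\cK[t]$ is a skew PID (Example~\ref{exmp:skew-pids}), we may write $\cK[t]\lambda_1+\cK[t]\mu=\cK[t]\delta$; then $\delta$ is a common right divisor of $\lambda_1$ and $\mu$. Write $\lambda_1=p_1\delta$ and $\mu=p_2\delta$. By the multiplicativity of Newton polygons under multiplication, the slopes (weighted by edge length) appearing in $N_\delta$ appear both in $N_{\lambda_1}$ and in $N_\mu$. Now $h=\nu\lambda_1$ with $N_\nu$ the first edge of $N_h$ (slope $s_1$) forces $N_{\lambda_1}$ to consist of the remaining edges of $N_h$ (slopes $s_2<\cdots<s_k$), whereas $N_\mu$ consists of a single edge of slope $s_1$. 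The two slope sets are disjoint, so $N_\delta$ carries no edges, $\deg\delta=0$, $\delta\in\cK^\times$, and $\cK[t]\lambda_1+\cK[t]\mu=\cK[t]$ as required.

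The right module statement follows by the entirely analogous mirror argument: well-definedness from $h\in\nu\cK[t]$ and $h\in\lambda_2\cK[t]$, matching dimensions $\deg\nu+\deg\lambda_2=d+(n-d)=n$, and coprimality $\nu\cK[t]+\lambda_2\cK[t]=\cK[t]$ by the same Newton-polygon slope-disjointness argument, now applied to common left divisors of $\nu$ (slopes $\{s_1\}$) and $\lambda_2$ (slopes $\{s_2,\ldots,s_k\}$). The principal thing to watch in executing this proof is the careful bookkeeping of left versus right throughout, which is why I reduce everything to one clean PID/Newton-polygon computation rather than attempting a direct intersection-of-ideals argument; the symmetric multiplicativity of Newton polygons is precisely what allows both halves of the lemma to be settled by the same underlying slope-disjointness observation.
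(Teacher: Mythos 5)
Your proof is correct and follows essentially the same route as the paper's: slope disjointness of $N_{\lambda_1}$ and $N_\mu$ forces the right greatest common divisor to be trivial, a Bezout identity $1 = a\lambda_1 + b\mu$ follows, and this identity produces the preimages needed for the isomorphism. The paper simply states the triviality of the right gcd and cites Ore for the Bezout identity, then writes down an explicit inverse $(f,g)\mapsto fs\mu + gr\lambda_1$; you derive the coprimality more carefully from the skew PID property together with the multiplicativity of Newton polygons, and you replace the verification that the explicit inverse is well defined by a cleaner dimension count plus surjectivity, which is a small improvement since checking well-definedness of the paper's inverse map requires an extra (omitted) argument that $p\lambda_1 s\mu + q\mu r\lambda_1 \in \cK[t]h$.
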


\begin{proof}
As no edge of $\lambda_1$ has the same slope as the edge of $\mu$, the right greatest common divisor of $\lambda_1$ and $\mu$ is $1$.
Hence by \cite[Thm.~4]{oo:tncp}, there exist $r,s\in \cK[t]$ such that $r \lambda_1+s \mu=1$. 
An inverse to the canonical homomorphism of left modules is therefore given by $(f,g)\mapsto fs\mu+gr \lambda_1$.

In the same way, one obtains the isomorphism for the right modules.
\end{proof}

\section{Modules over complete discretely valued skew fields}\label{sec:modules}

As before, let $(\cL,v)$ be a complete discretely valued skew field with valuation ring $\cO$.
Throughout this section, $M$ will denote a finite dimensional left-$\cL$-vector space.

We stick here to left modules. However, all the statements and theorems in this section about left modules will also be true for right modules. This is clear as a right module is a left module for the opposite ring.

\begin{defn}
An \textit{$\cO$-lattice} $\Lambda$ in $M$ is a finitely generated $\cO$-submodule $\Lambda$ of $M$ containing an $\cL$-basis of $M$.

\end{defn}

We collect some facts on $\cO$-lattices which are well-known in the commutative setting, but are also valid in the non-commutative setting.

\begin{rem}\label{rem:lattices} \ 
\begin{enumerate}
\item If $\sigma\in \cO$ is a uniformizer, and $\Lambda$ an $\cO$-lattice in $M$, then
$\bigcup\limits_{n\geq 0} \sigma^{-n}\Lambda = M.$
\item As $\cO$ is a skew principal ideal domain, any $\cO$-lattice $\Lambda$ in $M$ is a free $\cO$-module generated by a suitable $\cL$-basis of $M$ (cf.~Thm.~\ref{thm:modules-over-pids}\eqref{nj:thm18}). 
\item Any $\cO$-submodule $\Lambda'$ of an $\cO$-lattice $\Lambda$ of the same rank is an $\cO$-lattice in $M$. (It is finitely generated free containing an $\cL$-basis of $M$ by~Thm.~\ref{thm:modules-over-pids}\eqref{nj:thm17}.)
\item For two  $\cO$-lattices $\Lambda$ and $\Lambda'$ in $M$, their intersection $\Lambda\cap \Lambda'$ as well as their sum $\Lambda+\Lambda'\subseteq M$ are  $\cO$-lattices, too.
\item If $\Lambda$ is an $\cO$-lattice of $M$, and $\Lambda'$ an $\cO$-submodule of $M$. Then $\Lambda'$ is an $\cO$-lattice in $M$, if and only if there are integers $l_1,l_2\in \ZZ$ such that
$\sigma^{l_1}\Lambda\subseteq \Lambda'\subseteq \sigma^{l_2}\Lambda$.
\end{enumerate}
\end{rem}

\begin{lem}\label{lem:O-lattice-description}
An $\cO$-submodule $\Lambda$ of $M$ is an $\cO$-lattice if and only if $\Lambda$ contains an $\cL$-basis of $M$, and for every $m\in M, m\ne 0$, there is $x\in \cL$ such that $x\cdot m\not\in \Lambda$. 
\end{lem}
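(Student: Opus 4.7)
The plan is to prove the two implications separately.

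For the forward direction, assume $\Lambda$ is an $\cO$-lattice. If some nonzero $m\in M$ had $xm\in\Lambda$ for every $x\in\cL$, then taking $x=\sigma^{-n}$ and using that $\Lambda$ is a left $\cO$-module gives $\cO\sigma^{-n}m\subseteq\Lambda$ for each $n\geq 0$, whence $\cL m=\bigcup_n\cO\sigma^{-n}m\subseteq\Lambda$. Since the map $x\mapsto xm$ is an $\cO$-linear isomorphism $\cL\xrightarrow{\sim}\cL m$, the submodule $\Lambda$ would contain an $\cO$-submodule isomorphic to $\cL$. But every $\cO$-submodule of the finitely generated free module $\Lambda$ is finitely generated by Theorem~\ref{thm:modules-over-pids}\eqref{nj:thm17}, contradicting the fact that $\cL$ is not finitely generated over $\cO$ (the chain $\cO\subsetneq\sigma^{-1}\cO\subsetneq\sigma^{-2}\cO\subsetneq\ldots$ never stabilizes).

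For the converse, fix an $\cL$-basis $e_1,\ldots,e_d$ of $M$ contained in $\Lambda$ and set $\Lambda_0:=\bigoplus_i\cO e_i$. It suffices to exhibit an integer $N\geq 0$ with $\Lambda\subseteq\sigma^{-N}\Lambda_0$: then an $\cO$-submodule of the finitely generated free module $\sigma^{-N}\Lambda_0$ over the skew PID $\cO$ is finitely generated by Theorem~\ref{thm:modules-over-pids}\eqref{nj:thm17}. I proceed by induction on $d$. For $d=1$, the classification of $\cO$-submodules of $\cL$ gives either $\Lambda=\sigma^{-N}\cO e_1$ (done) or $\Lambda=\cL e_1$ (excluded, being a line). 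For $d>1$, set $M':=\bigoplus_{i<d}\cL e_i$; by induction $\Lambda\cap M'$ is bounded in $M'$. The projection $\pi_d:M\to\cL$ onto the $e_d$-coordinate sends $\Lambda$ to a left $\cO$-submodule of $\cL$ containing $1$, so $\pi_d(\Lambda)$ equals either $\sigma^{-N_2}\cO$ for some $N_2\geq 0$ or $\cL$. In the first case pick $m'\in\Lambda$ with $\pi_d(m')=\sigma^{-N_2}$; each $m\in\Lambda$ has $\pi_d(m)=c\sigma^{-N_2}=c\pi_d(m')$ for some $c\in\cO$, so $m-cm'\in\Lambda\cap M'$ and $\Lambda=(\Lambda\cap M')+\cO m'$ is finitely generated.

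The main obstacle is ruling out the case $\pi_d(\Lambda)=\cL$, which I do by constructing a forbidden line via completeness. For each $k\geq 0$ choose $m_k\in\Lambda$ with $\pi_d(m_k)=\sigma^{-k}$ and write $m_k=y_k+\sigma^{-k}e_d$ with $y_k\in M'$. Then $\sigma m_{k+1}-m_k=\sigma y_{k+1}-y_k$ lies in $\Lambda\cap M'$, which is bounded, so $\sigma^{k+1}y_{k+1}-\sigma^ky_k=\sigma^k(\sigma y_{k+1}-y_k)$ has all coordinate-valuations tending to $+\infty$. Completeness of $\cL$ then produces a limit $z=\lim_k\sigma^ky_k\in M'$, and I claim $\cL(z+e_d)\subseteq\Lambda$. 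Indeed, for any $l\geq 0$ pick $k\geq l$ large enough that every coordinate of $\sigma^ky_k-z$ has valuation $\geq l$; then the identity
\[ \sigma^{k-l}m_k-\sigma^{-l}(z+e_d)=\sigma^{-l}(\sigma^ky_k-z)\in\Lambda_0\subseteq\Lambda, \]
combined with $\sigma^{k-l}m_k\in\Lambda$, yields $\sigma^{-l}(z+e_d)\in\Lambda$ for every $l\geq 0$, and therefore $\cL(z+e_d)\subseteq\Lambda$. Since $z+e_d\ne 0$ (its $e_d$-coordinate is $1$), this contradicts the hypothesis and finishes the proof. The delicate point is the valuation bookkeeping in this final case: it goes through despite non-commutativity precisely because left-multiplication by a power of $\sigma$ shifts every coordinate-valuation by the same integer, even though $\sigma$ does not commute with elements of $K$.
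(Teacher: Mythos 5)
Your proof is correct, and it takes a genuinely different route from the paper's. For the easy implication the paper exhibits an explicit scalar $x$ with $v(x)<-v(x_1)$, whereas you argue by contradiction via the observation that $\cL m$ would be an $\cO$-submodule of the finitely generated free module $\Lambda$ that is isomorphic to $\cL$, hence not finitely generated; both are fine, the paper's being slightly more direct. For the substantive converse the strategies diverge more sharply. The paper fixes an auxiliary $\cO$-lattice $\tilde{\Lambda}$, considers the decreasing chain $\Lambda_i=\sigma^i\Lambda\cap\tilde{\Lambda}$ and its images $\bar{\Lambda}_i$ in the finite-dimensional $(\cO/\sigma\cO)$-space $\tilde{\Lambda}/\sigma\tilde{\Lambda}$, and uses stabilization of that chain to bound $\Lambda$ above by $\sigma^{-n_0}\tilde{\Lambda}$, with no induction on dimension and no explicit invocation of completeness. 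You instead induct on $d=\dim_{\cL}M$, first classifying the $\cO$-submodules of $\cL$ containing $1$ in the base case, then projecting $\Lambda$ onto the last coordinate $\pi_d$; the case $\pi_d(\Lambda)=\cO\sigma^{-N_2}$ reduces to the codimension-one inductive hypothesis plus one extra generator, and the case $\pi_d(\Lambda)=\cL$ is excluded by an explicit Cauchy-sequence construction of a limit $z\in M'$ with $\cL(z+e_d)\subseteq\Lambda$. Your argument makes the dependence on completeness of $\cL$ entirely explicit (the limit $z$ is where it is used), and the bookkeeping you flag at the end — that left multiplication by $\sigma^l$ shifts all coordinate valuations uniformly, so non-commutativity with $K$ is harmless — is exactly the right thing to note. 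The price is a longer, multi-case argument; the paper's chain-stabilization proof is shorter, at the cost of a somewhat terse final step. Both are valid proofs of the lemma.
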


\begin{proof}
For showing the first implication, let $\Lambda$ be an $\cO$-lattice, and $b_1,\ldots, b_d$ be a basis of $\Lambda$. Let $m\in M$ be arbitrary, then there are $x_1,\ldots, x_d\in \cL$ such that $m=\sum_{i=1}^d x_ib_i$. Choose $x\in \cL$ such that $v(x)< -v(x_1)$. Then $xm=\sum_{i=1}^d (xx_i)b_i\not\in \Lambda$ as $v(xx_1)<0$, i.e. $xx_1\not\in \cO$.

For the other implication, let $\Lambda$ be an $\cO$-submodule of $M$ containing an $\cL$-basis of $M$, and satisfying the condition that for every $m\in M$, there is $x\in \cL$ such that $x\cdot m\not\in \Lambda$. 
Choose an $\cO$-lattice $\tilde{\Lambda}$ in  $M$ and consider the $\cO$-lattices $\Lambda_i$ of $M$ given by
\[ \Lambda_i:=\sigma^i\Lambda \cap \tilde{\Lambda} \]
for all $i\geq 0$, where $\sigma\in \cO$ is a uniformizer, as well as
\[ \bar{\Lambda}_i:=(\Lambda_i+\sigma\tilde{\Lambda})/(\sigma\tilde{\Lambda}) \subseteq \tilde{\Lambda}/(\sigma\tilde{\Lambda}).  \]
The latter are vector spaces over the skew field $\cO/\sigma\cO$ of dimension at most $d$.
As by definition, the sequence $(\Lambda_i)_{i\geq 0}$ is a desending chain of $\cO$-modules, the sequence
$(\bar{\Lambda}_i)_{i\geq 0}$ is a descending chain of $(\cO/\sigma\cO)$-vector spaces. Hence, there is some $n_0\geq 0$ such that
$\bar{\Lambda}_n=\bar{\Lambda}_{n_0}$ for all $n\geq n_0$.

Assume that $\bar{\Lambda}_{n_0}\neq 0$, then there is $m\in \Lambda_{n_0} \setminus \sigma\tilde{\Lambda}$, i.e.~$0\neq m\in \Lambda_{n}=\sigma^n\Lambda \cap \tilde{\Lambda}$ for all $n\in \NN$.
However, this implies $\sigma^{-n}m\in \Lambda$ for all $n\geq n_0$. As $\Lambda$ is an $\cO$-module, we get $\sigma^{-n}m\in \Lambda$ for all $n\in \ZZ$, contradicting our assumption.

Therefore $\bar{\Lambda}_{n}=0$ for all $n\geq n_0$, i.e.~$\Lambda_{n}+\sigma\tilde{\Lambda}=\sigma\tilde{\Lambda}$, or in other words $\Lambda_{n}\subseteq \sigma\tilde{\Lambda}$. Hence
\[ \Lambda\cap \sigma^{-n}\tilde{\Lambda}\subseteq \sigma^{1-n}\tilde{\Lambda} \]
for all $n\geq n_0$. Inductively, one obtains
\[ \Lambda\cap \sigma^{-n}\tilde{\Lambda} = \Lambda\cap \sigma^{-n_0}\tilde{\Lambda}\]
for all $n\geq n_0$, and by taking the union over all $n$:
\[ \Lambda =  \Lambda\cap \sigma^{-n_0}\tilde{\Lambda},\]
i.e.~$\Lambda\subseteq \sigma^{-n_0}\tilde{\Lambda}$ which implies that $\Lambda$ is finitely generated.
\end{proof}

\begin{prop}\label{prop:integral-submodule}
Let $\Lambda$ be an $\cO$-lattice inside $M$.
Further, let $M'$ be an $\cL$-submodule of $M$, and define
$\Lambda':=\Lambda\cap M'$.
Then any $\cO$-basis of $\Lambda'$ can be extended to an $\cO$-basis of $\Lambda$.
\end{prop}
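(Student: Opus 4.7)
The plan is to apply the elementary divisor theorem (Theorem~\ref{thm:modules-over-pids}\eqref{nj:thm17}) to the inclusion $\Lambda'\subseteq \Lambda$. Since $\Lambda$ is a finitely generated free $\cO$-module (Remark~\ref{rem:lattices}), the theorem yields an $\cO$-basis $\{b_1,\ldots,b_d\}$ of $\Lambda$ together with elements $x_1,\ldots,x_k\in \cO$ (for some $k\leq d$) such that $\{x_1 b_1,\ldots,x_k b_k\}$ is an $\cO$-basis of $\Lambda'$. In particular $\Lambda'$ is free of rank $k$, so any two $\cO$-bases of $\Lambda'$ are related by a matrix in $\GL_k(\cO)$.

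The heart of the proof will be showing that each $x_i$ is a unit in $\cO$, i.e.\ that $x_i\notin \mathfrak{m}=\sigma\cO$. If this failed, I would write $x_i=\sigma y_i$ with $0\ne y_i\in \cO$; then $y_i b_i$ lies in $\Lambda$, and \emph{crucially} also in $M'$, because $M'$ is an $\cL$-subspace (hence closed under multiplication by $\sigma^{-1}\in \cL$) and $y_i b_i=\sigma^{-1}(x_ib_i)$. So $y_i b_i\in \Lambda'$, and expanding it in the basis $\{x_j b_j\}_{j\leq k}$ and comparing against the linear independence of $\{b_1,\ldots,b_d\}$ will force $c_j x_j=0$ for $j\ne i$ and $c_i\sigma y_i=y_i$ for some $c_j\in \cO$. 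Since $\cO$ is a domain, this cancels on the right to give $c_i\sigma=1$, contradicting $\sigma$ being a uniformizer.

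Once every $x_i$ is a unit, the set $\{x_1 b_1,\ldots,x_k b_k,b_{k+1},\ldots,b_d\}$ is itself an $\cO$-basis of $\Lambda$, and it extends the particular basis of $\Lambda'$ produced by the elementary divisor theorem. To promote this to an \emph{arbitrary} $\cO$-basis $\{v_1,\ldots,v_k\}$ of $\Lambda'$, I let $U\in \GL_k(\cO)$ be the change-of-basis matrix from $(x_1 b_1,\ldots,x_k b_k)$ to $(v_1,\ldots,v_k)$ and apply the block-diagonal matrix $\mathrm{diag}(U,I_{d-k})\in \GL_d(\cO)$ to the extended basis; this produces an $\cO$-basis of $\Lambda$ whose first $k$ vectors are precisely $v_1,\ldots,v_k$.

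I expect the main obstacle to be the unit argument in the middle paragraph: in the non-commutative setting one must be careful about sidedness when comparing coefficients and cancelling. The fact that we are in a skew \emph{domain} is what makes the needed cancellations legitimate, and the fact that $M'$ is an $\cL$-submodule (rather than merely an $\cO$-submodule) is what allows the dangerous element $y_i b_i=\sigma^{-1}(x_i b_i)$ to sit inside $M'$ in the first place---both ingredients are indispensable to the argument.
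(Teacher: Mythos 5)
Your proof is correct, and it hinges on exactly the right idea: the $\cL$-stability of $M'$ is what kills any torsion in $\Lambda/\Lambda'$. The paper's proof is shorter and more abstract: it shows directly that $\Lambda/\Lambda'$ is torsion-free, because if $x\cdot m\in \Lambda'\subseteq M'$ with $x\in\cO\smallsetminus\{0\}$ and $m\in\Lambda$, then multiplying by $x^{-1}\in\cL$ gives $m\in M'\cap\Lambda=\Lambda'$; since finitely generated torsion-free modules over a skew PID are free, the short exact sequence $0\to\Lambda'\to\Lambda\to\Lambda/\Lambda'\to 0$ splits and any basis of $\Lambda'$ joins with lifted basis vectors of $\Lambda/\Lambda'$. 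You instead invoke the elementary divisor theorem to produce an adapted basis $\{b_i\}$ with $\Lambda'=\bigoplus_i \cO\, x_ib_i$, and then prove each invariant factor $x_i$ is a unit by the same contradiction (if $x_i=\sigma y_i$, then $y_ib_i=\sigma^{-1}(x_ib_i)\in\Lambda\cap M'=\Lambda'$, ultimately forcing $\sigma$ to be invertible in $\cO$). The two statements are equivalent, since $\Lambda/\Lambda'\cong\bigoplus_i\cO/x_i\cO\oplus\cO^{d-k}$ is torsion-free exactly when all $x_i$ are units. What your version buys is explicitness about the final basis extension, including the reduction from a particular basis of $\Lambda'$ to an arbitrary one via a block change-of-basis matrix in $\GL_d(\cO)$; what the paper's version buys is brevity and independence from the elementary divisor theorem, needing only the free/torsion decomposition over a skew PID.
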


\begin{proof}
As $\cO$ is a skew PID, we just have to show that $\Lambda/\Lambda'$ is torsionfree, hence free. Then the join of a basis of $\Lambda'$ and representatives of a basis of $\Lambda/\Lambda'$ form a basis of $\Lambda$.

Assume to the contrary, that $\Lambda/\Lambda'$ is not torsionfree. Hence, there exist $m\in \Lambda - \Lambda'$, and $x\in \cO$ such that $x\cdot m\in \Lambda'$. As 
$\Lambda'=\Lambda\cap M'$, this implies $x\cdot m\in M'$, and hence $m\in M'$, leading to the contradiction $m\in \Lambda\cap M'=\Lambda'$. 
\end{proof}

\begin{rem}
Be aware that in general we can not extend this result to a decomposition $M=M'\oplus M''$ of submodules,
i.e. we can not find bases of the $\cO$-sublattices $\Lambda'$ and $\Lambda''$ which join to a basis of $\Lambda$. 
\end{rem}

We now turn our attention to $\cL[t]$-modules which are finite dimensional as $\cL$-vector spaces.
We start by stating the invariant factors theorem which follows directly from the proof of Thm.~\ref{thm:modules-over-pids}\eqref{nj:thm19} in \cite{nj:tr}, and is also explained in \cite[p.~380]{pmc:sftgdr}.

\begin{thm}\label{thm:elementary-divisor-theorem} (Invariant Factors Theorem, see \cite[p.~380]{pmc:sftgdr})
Given a left $\cL[t]$-module $M$ which is finitely generated as $\cL$-vector space of dimension $d$,
there exist monic polynomials $\lambda_1,\ldots,\lambda_d\in \cL[t]\smallsetminus\{0\}$ such that $\lambda_{i+1}$ is left-divisible and right-divisible by $\lambda_i$ for all $i=1,\dots,d-1$, and
\[  M \cong \cL[t]/\lambda_1\cL[t]\oplus \ldots \oplus\cL[t]/\lambda_d\cL[t] \]
as $\cL[t]$-modules.
\end{thm}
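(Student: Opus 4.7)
The plan is to realise $M$ as the cokernel of the matrix $t\one_d - D$, where $D\in\Mat_{d\times d}(\cL)$ encodes the action of $t$ in a fixed $\cL$-basis, and then to apply the Smith normal form theorem for the skew PID $\cL[t]$. More precisely, I would fix an $\cL$-basis $e_1,\ldots,e_d$ of $M$, define $D\in\Mat_{d\times d}(\cL)$ by $t\cdot e_j = \sum_{i=1}^d D_{ij}e_i$, and consider the canonical surjective $\cL[t]$-linear map $\pi\colon \cL[t]^d \twoheadrightarrow M$ sending the standard basis to $(e_1,\ldots,e_d)$. A direct reduction argument then shows that $\ker\pi$ coincides with the $\cL[t]$-submodule of $\cL[t]^d$ generated by the columns of $t\one_d-D$, since every element of $\cL[t]^d$ can be rewritten modulo these columns as an element of $\cL^d$, on which $\pi$ restricts to an $\cL$-isomorphism onto $M$.

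Next I would apply Theorem~\ref{thm:modules-over-pids}\eqref{nj:thm16} to the matrix $t\one_d-D$ over the skew PID $\cL[t]$. The theorem produces invertible matrices $P,Q\in\GL_d(\cL[t])$ and polynomials $\mu_1,\ldots,\mu_d\in\cL[t]$ with $\mu_i$ a total divisor of $\mu_{i+1}$, such that $P(t\one_d-D)Q=\diag(\mu_1,\ldots,\mu_d)$. These invertible matrices induce automorphisms of $\cL[t]^d$, and hence the presentation from the first step turns into
\[ M \;\cong\; \bigoplus_{i=1}^d\cL[t]/\mu_i\cL[t].\]
None of the $\mu_i$ is zero, since $M$ is $\cL[t]$-torsion: for every $m\in M$, the cyclic submodule $\cL[t]\cdot m\subseteq M$ has finite $\cL$-dimension, so $m$ has a non-trivial annihilator.

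Finally, I would normalise each $\mu_i$ to a monic polynomial $\lambda_i$. Since $t$ is central in $\cL[t]$, rescaling $\mu_i$ by the inverse of its leading coefficient in $\cL^{\times}$ produces a monic polynomial $\lambda_i$, and this rescaling can be absorbed into the corresponding elementary-matrix adjustments so that the total-divisor property among the consecutive $\lambda_i$'s is preserved.

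The main obstacle is the non-commutative book-keeping in this last step: in $\cL[t]$ the right ideal $\mu_i\cL[t]$ is in general not a left ideal, so interpreting $\cL[t]/\lambda_i\cL[t]$ as an $\cL[t]$-module compatible with the left-module structure on $M$ and checking that the monic normalisation preserves both left- and right-divisibility among the $\lambda_i$ requires care. The centrality of $t$ is precisely what makes these manipulations work, since it permits scalars in $\cL$ to move freely past powers of $t$. A detailed proof of this invariant factors theorem over skew PIDs is given in \cite[p.~380]{pmc:sftgdr}.
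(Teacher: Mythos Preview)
Your proposal is correct and follows exactly the approach the paper outlines: the theorem is stated with a citation to \cite[p.~380]{pmc:sftgdr} and \cite{nj:tr}, and Remark~\ref{rem:invariant-factor-computation} describes precisely your argument---choose an $\cL$-basis, let $D$ represent the $t$-action, recognise $M$ as the cokernel of $C=t\cdot\one_d-D$, and diagonalise $C$ by row and column operations over the skew PID $\cL[t]$. Your treatment of the monic normalisation and the non-commutative bookkeeping is more explicit than the paper's remark, but the underlying strategy is identical.
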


\begin{rem}\label{rem:invariant-factor-computation}
Contrary to the commutative case, the monic polynomials $\lambda_i$ are not unique, but only unique up to \emph{similarity} which is equivalent to the factors $\cL[t]/\lambda_i\cL[t]$ being unique up to isomorphism (cf.~\cite[Ch.~3, Theorem 31]{nj:tr}). However, the
way to obtain them is the same as in the commutative case:\\
Choose an $\cL$-basis $(e_1,\ldots, e_d)$ of $M$, represent multiplication by $t$ by a matrix $D$ with respect to this basis, and
consider the matrix $C:=t\cdot \one -D\in \Mat_{d\times d}(\cL[t])$.

By applying row and column operations, we can transform the matrix $C$ into diagonal form with diagonal entries $\lambda_1,\ldots, \lambda_d$ as in the theorem, due to the existence of left Euclidean and right Euclidean algorithms in $\cL[t]$.

The isomorphism of $\cL[t]$-modules is then obtained by recognizing that $M$ is the cokernel of the map $\cL[t]^d\to\cL[t]^d$ given by $C$ on the standard basis, and row and column operations on $C$ just correspond to changes of bases in the source and target, respectively.
\end{rem}

\begin{thm}\label{thm:decomposition-in-one-slopes}
Given a left $\cL[t]$-module $M$ which is finitely generated as $\cL$-vector space of dimension $d$, 
there exist monic polynomials $f_1,\ldots,f_k\in \cL[t]\smallsetminus\cL$ ($k\leq d$) such that the Newton polygon of each $f_i$ consists of one edge, and
\[  M \cong \cL[t]/f_1\cL[t]\oplus \ldots \oplus\cL[t]/f_k\cL[t] \]
as $\cL[t]$-modules.
\end{thm}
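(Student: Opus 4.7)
The plan is to combine the Invariant Factors Theorem (Theorem~\ref{thm:elementary-divisor-theorem}) with the slope-factorization of Proposition~\ref{prop:factoring-by-slopes} (or rather its iteration in Remark~\ref{rem:right-factor-for-each-edge}) and the Chinese Remainder-type decomposition of Theorem~\ref{thm:chinese-remainder-theorem}. In short: invariant factors give a decomposition into cyclic pieces $\cL[t]/\lambda_i\cL[t]$, each $\lambda_i$ splits by slopes into factors with one-edge Newton polygons, and the Chinese Remainder Theorem turns such a factorization into a direct-sum decomposition of the cyclic module.

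First, apply Theorem~\ref{thm:elementary-divisor-theorem} to obtain monic polynomials $\lambda_1,\ldots,\lambda_d\in \cL[t]$ with
\[ M \cong \cL[t]/\lambda_1\cL[t]\oplus \ldots \oplus\cL[t]/\lambda_d\cL[t] \]
as left $\cL[t]$-modules. Since $M$ has $\cL$-dimension $d$, we have $\sum_{i=1}^d \deg_t(\lambda_i)=d$, so those $\lambda_i$ which lie in $\cL$ must equal $1$ and contribute a zero summand; we discard these indices, keeping only those $i$ with $\deg_t(\lambda_i)\geq 1$.

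Next, for each remaining $\lambda_i$, let $k_i\geq 1$ be the number of edges of its Newton polygon $N_{\lambda_i}$. By Remark~\ref{rem:right-factor-for-each-edge} (applied iteratively, or equivalently by repeated use of Proposition~\ref{prop:factoring-by-slopes}), we may write
\[ \lambda_i = \mu_{i,1}\cdot \mu_{i,2}\cdots \mu_{i,k_i} \]
where each $\mu_{i,j}\in \cL[t]\smallsetminus \cL$ is monic, its Newton polygon consists of a single edge, and the edges of $\mu_{i,1},\ldots, \mu_{i,k_i}$ are exactly the edges of $N_{\lambda_i}$ in some order. In particular, each $\mu_{i,j}$ is a right factor of $\lambda_i$ corresponding to a distinct edge of $N_{\lambda_i}$. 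Applying Theorem~\ref{thm:chinese-remainder-theorem}(1) then yields an isomorphism of left $\cL[t]$-modules
\[ \cL[t]/\lambda_i\cL[t]\;\cong\; \cL[t]/\mu_{i,1}\cL[t]\oplus \ldots\oplus \cL[t]/\mu_{i,k_i}\cL[t]. \]

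Collecting these decompositions across all surviving $i$, and relabelling the resulting factors as $f_1,\ldots,f_k$, we obtain
\[ M \cong \cL[t]/f_1\cL[t]\oplus \ldots \oplus \cL[t]/f_k\cL[t] \]
with each $f_l$ monic, $\deg_t(f_l)\geq 1$, and $N_{f_l}$ consisting of exactly one edge. Finally, $\sum_l \deg_t(f_l) = \sum_i \deg_t(\lambda_i) = d$ and each $\deg_t(f_l)\geq 1$, hence $k\leq d$.

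There is no serious obstacle: the only point requiring a little care is to use \emph{right} factors of each $\lambda_i$ (as in Remark~\ref{rem:right-factor-for-each-edge}) so that Theorem~\ref{thm:chinese-remainder-theorem}(1) applies to the \emph{left} module $\cL[t]/\lambda_i\cL[t]$, and to discard the trivial invariant factors equal to $1$ before invoking the slope factorization.
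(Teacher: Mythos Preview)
Your proof is correct and follows exactly the paper's approach: apply the Invariant Factors Theorem (Theorem~\ref{thm:elementary-divisor-theorem}) and then the Chinese Remainder Theorem (Theorem~\ref{thm:chinese-remainder-theorem}) to each cyclic factor $\cL[t]/\lambda_i\cL[t]$. One minor slip: from a \emph{single} factorization $\lambda_i=\mu_{i,1}\cdots\mu_{i,k_i}$ over a non-commutative ring it does \emph{not} follow that every $\mu_{i,j}$ is a right factor of $\lambda_i$ (your ``In particular'' is unjustified as stated), but this is harmless since the last sentence of Remark~\ref{rem:right-factor-for-each-edge}---which you already cite---directly furnishes a right factor for each edge, and that is all Theorem~\ref{thm:chinese-remainder-theorem}(1) needs.
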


\begin{proof}
This is obtained directly by applying Thm.~\ref{thm:elementary-divisor-theorem}, and then applying Thm.~\ref{thm:chinese-remainder-theorem} to the factors $\cL[t]/\lambda_i\cL[t]$ obtained in Thm.~\ref{thm:elementary-divisor-theorem}.
\end{proof}

We now turn our attention to $\cO$-lattices in such $\cL[t]$-modules $M$.
The next theorem and its corollary is about the existence of a lattice with special properties, whereas Theorem~\ref{thm:one-slope-modules} and Corollary \ref{cor:positive-slopes} are about properties for general $\cO$-lattices.

\begin{thm}\label{thm:stable-O-lattice}
Let $\sigma\in \cL$ be a uniformizer (i.e.~$v(\sigma)=1$), and let $f\in \cL[t]$ be a non-trivial monic polynomial of degree $d$ whose Newton polygon $N_f$ consists of exactly one edge. Let $s\in \QQ$ be the slope of this edge.
Further, let  $M$ be a left $\cL[t]$-module isomorphic to $\cL[t]/f\cL[t]$. 
\begin{enumerate}
\item \label{thm:stable-O-lattice:item:1} There is an $\cO$-lattice $\Lambda$ in $M$ such that
\[  \sigma^{r}t^d\Lambda = \Lambda, \]
where $r=ds\in \NN$. If $s$ is positive, the lattice $\Lambda$ can be chosen to further satisfy $t^{-1}\Lambda\subseteq \Lambda$.
\item \label{thm:stable-O-lattice:item:2} If $u,v\in \ZZ$, $u\neq 0$, and $\Lambda'$ is a lattice in $M$ such that $\sigma^v t^u\Lambda'=\Lambda'$, then $\frac{v}{u}=s$.
\end{enumerate}
\end{thm}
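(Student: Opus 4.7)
The plan is to construct a $\QQ$-valued valuation $\bar v$ on $M$ satisfying $\bar v(tm) = \bar v(m) - s$ and then take $\Lambda$ to be its closed unit ball; the one-edge hypothesis on the Newton polygon of $f$ is precisely what lets such a valuation exist. Write $f = \sum_{i=0}^d a_i t^i$ with $a_d = 1$; the single-edge condition reads $v(a_i) \geq -r + is$ for all $i$, with equality at $i = 0$, so in particular $a_0 \neq 0$. Using the $\cL$-basis $\{1, t, \ldots, t^{d-1}\}$ of $M$, I would set
\[ \bar v\!\left(\sum_{i=0}^{d-1} x_i t^i\right) := \min_{0 \leq i \leq d-1}\bigl(v(x_i) - is\bigr), \qquad \bar v(0) := \infty. \]
Centrality of $t$ makes $\bar v(m + m') \geq \min\{\bar v(m), \bar v(m')\}$ and $\bar v(\alpha m) = v(\alpha) + \bar v(m)$ for $\alpha \in \cL$ immediate.

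The one nontrivial step, and the main obstacle, is the compatibility $\bar v(tm) = \bar v(m) - s$. Computing $tm = -x_{d-1}a_0 + \sum_{k=1}^{d-1}(x_{k-1} - x_{d-1} a_k) t^k$ and using $v(a_j) \geq -r + js$ together with $r = ds$, each coefficient of $tm$ has valuation $\geq \bar v(m) + (k-1)s$, hence $\bar v(tm) \geq \bar v(m) - s$. For the reverse, let $i_0$ be any index realizing the minimum defining $\bar v(m)$. If $i_0 = d-1$, then the $k=0$ coefficient $-x_{d-1}a_0$ has valuation exactly $v(x_{d-1}) - r = \bar v(m) - s$, with no cancellation. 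Otherwise $i_0 < d-1$ forces $v(x_{d-1}) > \bar v(m) + (d-1)s$ strictly, so at $k = i_0 + 1$ the term $x_{d-1} a_{i_0+1}$ has strictly greater valuation than $x_{i_0}$; hence no cancellation, and the $k$-th coefficient of $tm$ has valuation exactly $\bar v(m) + i_0 s$, yielding equality.

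With $\bar v$ in hand, set $\Lambda := \{m \in M : \bar v(m) \geq 0\}$. This is a left $\cO$-submodule containing the $\cL$-basis $\{\sigma^{\lceil is\rceil} t^i\}_{i=0}^{d-1}$, and any $x \in \cL$ with $v(x) < -\bar v(m)$ sends a given $0 \neq m$ outside $\Lambda$, so Lemma~\ref{lem:O-lattice-description} promotes $\Lambda$ to an $\cO$-lattice. The identity $\bar v(\sigma^r t^d m) = r + \bar v(m) - ds = \bar v(m)$, together with the invertibility of $t$ in $M$ (which follows from $a_0 \neq 0$), gives $\sigma^r t^d \Lambda = \Lambda$. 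When $s > 0$, $\bar v(t^{-1}m) = \bar v(m) + s \geq 0$ for $m \in \Lambda$ shows $t^{-1}\Lambda \subseteq \Lambda$, completing part \eqref{thm:stable-O-lattice:item:1}.

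Part \eqref{thm:stable-O-lattice:item:2} is then a formal consequence. Any $\cO$-lattice $\Lambda'$ is finitely generated, hence $\bar v$ is bounded below on it; on the other hand $\bar v(\sigma^n m) = n + \bar v(m)$ shows $\bar v$ is unbounded above on $\Lambda'$. If $\sigma^v t^u \Lambda' = \Lambda'$ with $u \neq 0$, then $\sigma^v t^u$ and its inverse shift $\bar v$ by $\pm(v - us)$; a nonzero shift, iterated in the direction that decreases $\bar v$, would produce elements of arbitrarily negative $\bar v$ in $\Lambda'$, contradicting the lower bound. Hence $v - us = 0$, i.e., $v/u = s$.
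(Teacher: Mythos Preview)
Your proof is correct and takes a different route from the paper's. The paper builds $\Lambda$ explicitly as the $\cO$-span of $\{\sigma^{\lfloor is\rfloor}t^i b : 0\le i\le d-1\}$ (where $b$ is the class of $1$) and proves by a two-sided induction on $n\in\ZZ$ that $\sigma^{\lfloor ns\rfloor}t^n b\in\Lambda$ for all $n$; the identities $\sigma^{r}t^d\Lambda=\Lambda$ and, for $s>0$, $t^{-1}\Lambda\subseteq\Lambda$ are then read off from this. For part~\eqref{thm:stable-O-lattice:item:2} the paper sandwiches $\Lambda'$ between $\sigma$-shifts of this $\Lambda$ and derives a contradiction from $dv-ru\ne 0$. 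You instead package the single-edge hypothesis into a Gauss-type valuation $\bar v$ on $M$ with the key identity $\bar v(tm)=\bar v(m)-s$, and take $\Lambda$ to be its unit ball; both the lattice identity and part~\eqref{thm:stable-O-lattice:item:2} then drop out formally from how $\sigma$ and $t$ shift $\bar v$. Your approach is more conceptual---the Newton condition is visibly exactly what makes $\bar v(tm)=\bar v(m)-s$ hold---and it renders part~\eqref{thm:stable-O-lattice:item:2} essentially free, while the paper's explicit-basis argument avoids introducing auxiliary structure. One small wording point: your case split only works if you take $i_0=d-1$ whenever $d-1$ attains the minimum; as phrased, ``$i_0<d-1$ forces $v(x_{d-1})>\bar v(m)+(d-1)s$ strictly'' presumes $d-1$ is \emph{not} a minimizer, which ``let $i_0$ be any index realizing the minimum'' does not guarantee.
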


\begin{proof}
Write  $f=\sum_{i=0}^{d-1} c_it^i + t^d$ with $c_i\in \cL$, and let $b\in M$ correspond to the residue class of $1$ in $\cL[t]/f\cL[t]$.
Since the slope of $N_f$ is $s$, we have $v(c_0)=-ds=-r$ and $v(c_i)\geq (i-d)\cdot s$ for $i=1,\ldots, d-1$.
We let $\Lambda$ be the $\cO$-lattice  generated by $\{ \sigma^{\lfloor is\rfloor}t^ib \mid i=0,\ldots, d-1\}$ and claim that this lattice satisfies the desired conditions. Here $\lfloor x \rfloor$ denotes the floor of $x$, i.e.~the largest integer $\leq x$.

We will show $\sigma^{\lfloor ns\rfloor}t^nb\in \Lambda$ for all $n\in \ZZ$, from which one deduces $\sigma^{ds}t^d\Lambda \subseteq \Lambda$ as well as $\Lambda=\sigma^{ds}t^d\left(\sigma^{-ds}t^{-d} \Lambda\right) \subseteq \sigma^{ds}t^d\Lambda$.
If $s$ is positive, one further obtains from this statement that $t^{-1}\sigma^{\lfloor is\rfloor}t^ib \in \Lambda$ for all $i=0,\ldots, d-1$, since $ \lfloor is\rfloor\geq \lfloor (i-1)s\rfloor$ in this case.

\medskip

First at all, the condition $\sigma^{\lfloor ns\rfloor}t^nb\in \Lambda$ is fulfilled for $n=0,\ldots, d-1$ by definition of $\Lambda$.
Let $n>d-1$, then we have
\begin{eqnarray*} 
\sigma^{\lfloor ns\rfloor}t^nb &=& \sigma^{\lfloor ns\rfloor}t^{n-d}\left( -\sum_{i=0}^{d-1} c_it^ib \right) \\
&=&  -\sum_{i=0}^{d-1} \left( \sigma^{\lfloor ns\rfloor}\cdot c_i\right)t^{n-d+i}b.
\end{eqnarray*}
Since $v(c_i)$ is an integer greater or equal to $(i-d)\cdot s$, we have
\[ v\left(\sigma^{\lfloor ns\rfloor}\cdot c_i\right) = \lfloor ns\rfloor + v(c_i)=\lfloor ns+ v(c_i) \rfloor\geq \lfloor (n+i-d)s\rfloor. \]
Hence, by induction hypothesis $\left( \sigma^{\lfloor ns\rfloor}\cdot c_i\right)t^{n-d+i}b\in \Lambda$, and therefore
$\sigma^{\lfloor ns\rfloor}t^nb\in \Lambda$.

On the other hand, for $n<0$, we have
\begin{eqnarray*}
\sigma^{\lfloor ns\rfloor}t^nb 
&=& \sigma^{\lfloor ns\rfloor}t^{n}\left( -c_0^{-1}\cdot \bigl(t^db + \sum_{i=1}^{d-1} c_it^ib\bigr) \right) \\
&=&  -\sigma^{\lfloor ns\rfloor}c_0^{-1} t^{n+d}b -
\sum_{i=1}^{d-1} \left( \sigma^{\lfloor ns\rfloor}\cdot c_0^{-1}\cdot c_i\right)t^{n+i}b.
\end{eqnarray*}
Since $v(c_0)=-ds\in \NN$, we have
\[ v\left(\sigma^{\lfloor ns\rfloor}c_0^{-1}\right)=\lfloor ns\rfloor +ds = \lfloor (n+d)s\rfloor, \]
as well as,
\[ v\left(\sigma^{\lfloor ns\rfloor} \cdot c_0^{-1}\cdot c_i\right)=\lfloor (n+d)s+v(c_i)\rfloor\geq \lfloor (n+d)s+(i-d)s\rfloor
=\lfloor (n+i)s\rfloor. \]
Hence, by backwards induction hypothesis all summands above are in $\Lambda$, and therefore $\sigma^{\lfloor ns\rfloor}t^nb\in \Lambda$.

For showing part \eqref{thm:stable-O-lattice:item:2}, let $u,v\in \ZZ$, $u\neq 0$, and $\Lambda'$ be a lattice in $M$ such that $\sigma^v t^u\Lambda'=\Lambda'$.
Let $\Lambda$ be the lattice constructed in part \eqref{thm:stable-O-lattice:item:1}.
 
As $\Lambda$ and $\Lambda'$ are two $\cO$-lattices in $M$, there are some $l_1,l_2\geq 0$ such that
\[ \sigma^{-l_1}\Lambda'\supseteq \Lambda \supseteq \sigma^{l_2}\Lambda'. \]
Then for any $n\in \ZZ$, we obtain
\begin{eqnarray*}
\Lambda &\supseteq & \sigma^{l_2}\Lambda' =\sigma^{l_2}\sigma^{dnv} t^{dnu}\Lambda' \\
&\supseteq & \sigma^{l_2}\sigma^{dnv} t^{dnu}\sigma^{l_1}\Lambda = \sigma^{l_1+l_2}\sigma^{dnv}\sigma^{-rnu}\Lambda \\
&=& \sigma^{l_1+l_2+n(dv-ru)}\Lambda 
\end{eqnarray*}
If $\frac{v}{u}\neq \frac{r}{d}$, and hence $dv-ru\neq 0$, we obtain a contradiction by choosing $n$ such that $l_1+l_2+n(dv-ru)<0$.
\end{proof}

\begin{cor}\label{cor:stable-O-lattices}
Let $M$ be a left $\cL[t]$-module which is finitely generated as $\cL$-vector space.
Let $f_1,\ldots, f_k\in \cL[t]$ be non-trivial monic polynomials whose Newton polygons consist of exactly one edge such that
$M$ is isomorphic to $\bigoplus_{i=1}^k\cL[t]/f_i\cL[t]$.
For $i=1,\ldots, k$, let $s_i$ be the slope corresponding to $f_i$, $d_i$ the degree of $f_i$, and $M_i\subseteq M$ the submodule corresponding to the factor $\cL[t]/f_i\cL[t]$.

Then for each $i=1,\ldots, k$, there are $\cO$-lattices $\Lambda_i$ in $M_i$ such that 
\[ \sigma^{d_is_i}t^{d_i}\Lambda_i=\Lambda_i. \]
If all the slopes are positive, the lattices can be chosen to further satisfy $t^{-1}\Lambda_i\subseteq \Lambda_i$.
\end{cor}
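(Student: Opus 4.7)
The statement is essentially a direct corollary of Theorem~\ref{thm:stable-O-lattice} applied to each summand individually. My plan is the following.

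First, I would invoke the isomorphism $M \isom \bigoplus_{i=1}^k \cL[t]/f_i\cL[t]$ to identify each summand $M_i$ as an $\cL[t]$-submodule of $M$ that is itself isomorphic to $\cL[t]/f_i\cL[t]$. Since each $f_i$ is a non-trivial monic polynomial whose Newton polygon consists of a single edge of slope $s_i$ and $\deg_t(f_i)=d_i$, the module $M_i$ falls exactly under the hypotheses of Theorem~\ref{thm:stable-O-lattice}.

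Next, I would apply Theorem~\ref{thm:stable-O-lattice}\eqref{thm:stable-O-lattice:item:1} separately to each $M_i$. This yields, for every $i \in \{1,\ldots,k\}$, an $\cO$-lattice $\Lambda_i \subseteq M_i$ satisfying
\[ \sigma^{d_is_i}t^{d_i}\Lambda_i = \Lambda_i, \]
which is exactly the required identity. (Note that $d_is_i \in \NN$ by the definition of the Newton polygon slope, so this makes sense.) In the case where every slope $s_i$ is positive, the same theorem furthermore allows the choice of each $\Lambda_i$ to satisfy the inclusion $t^{-1}\Lambda_i \subseteq \Lambda_i$, which gives the second assertion.

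There is no real obstacle here; the corollary is merely the componentwise version of Theorem~\ref{thm:stable-O-lattice}. The only point worth remarking is that we do not need to assemble the $\Lambda_i$ into a single lattice in $M$ compatible with the decomposition, since the statement only asks for lattices $\Lambda_i$ inside the respective summands $M_i$, and these are produced independently of each other.
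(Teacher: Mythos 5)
Your proof is correct and follows exactly the same approach as the paper, which simply applies Theorem~\ref{thm:stable-O-lattice} to each factor $M_i$. Your additional remark that the $\Lambda_i$ need not be assembled into a single lattice in $M$ is a sensible clarification but does not change the argument.
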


\begin{proof}
We just have to apply the previous theorem to each factor $M_i$.
\end{proof}

\begin{thm}\label{thm:one-slope-modules}
Let $\sigma\in \cL$ be a uniformizer (i.e.~$v(\sigma)=1$), and let $f\in \cL[t]$ be a non-trivial monic polynomial whose Newton polygon $N_f$ consists of exactly one edge.
Further, let  $M$ be a left $\cL[t]$-module isomorphic to $\cL[t]/f\cL[t]$, and let $\Lambda\subset M$ be an $\cO$-lattice in $M$.
\begin{enumerate}
\item \label{thm:item:1} If the slope of $N_f$ is non-positive, then there exists an $\cO$-basis $(b_1,\ldots, b_d)$ of $\Lambda$ such that for all $k\geq 0$ and $m\in \Lambda$,
\[\pr_d(t^k\cdot m)\in \cO\cdot b_d,\]
where $\pr_d:M\to \cL\cdot b_d,\sum_{i=1}^d x_ib_i\mapsto x_db_d$ denotes the projection to the last coordinate.
\item \label{thm:item:2} If the slope of $N_f$ is positive, there exist $k_0\geq 0$ such that for all $k\geq k_0$,
\[ t^k\Lambda \supseteq \sigma^{-1}\Lambda\]
\end{enumerate}
\end{thm}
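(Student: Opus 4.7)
The plan is to handle the two parts separately, in each case comparing $\Lambda$ to an auxiliary lattice adapted to the slope.

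\medskip
For part~(1), I would begin with the observation that the slope condition $s\leq 0$ forces $v(c_i)\geq (i-d)s\geq 0$ for every coefficient of $f = t^d + \sum_{i=0}^{d-1} c_i t^i$, so all $c_i$ lie in $\cO$ and $t$ acts on $M$ as an element integral over $\cO$. Consequently the $\cO$-module
\[\Lambda^*:= \cO[t]\cdot\Lambda = \Lambda+t\Lambda+\cdots+t^{d-1}\Lambda\]
is finitely generated, contains $\Lambda$, and is therefore a $t$-stable $\cO$-lattice containing $\Lambda$. I would then invoke the invariant factor theorem (Theorem~\ref{thm:modules-over-pids}\eqref{nj:thm17}) on the inclusion $\Lambda\subseteq\Lambda^*$ to obtain a basis $(\tilde e_1,\ldots,\tilde e_d)$ of $\Lambda^*$ and integers $0\leq\nu_1\leq\cdots\leq\nu_d$ with $(\sigma^{\nu_i}\tilde e_i)_i$ a basis of $\Lambda$. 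The critical, non-obvious step is the choice $b_d:=\sigma^{\nu_1}\tilde e_1$ corresponding to the \emph{smallest} invariant factor, with $b_1,\ldots,b_{d-1}$ being the remaining $\sigma^{\nu_i}\tilde e_i$ ($i\geq 2$) in any order. The verification is then a valuation-tracking exercise: any $m\in\Lambda$ expressed in $(\tilde e_i)$ has its $\tilde e_i$-coefficient of valuation at least $\nu_i$; $t$-stability of $\Lambda^*$ provides $\cO$-entries for the matrix of $t$ in the basis $(\tilde e_i)$; hence the $\tilde e_1$-coefficient of $t^k m$ has valuation at least $\min_i\nu_i=\nu_1$, so after dividing by $\sigma^{\nu_1}$ to convert into the $b_d$-coefficient, one lands in $\cO$ as required.

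\medskip
For part~(2) the slope is positive, so Theorem~\ref{thm:stable-O-lattice}\eqref{thm:stable-O-lattice:item:1} provides an $\cO$-lattice $\Lambda_0$ with $t^d\Lambda_0=\sigma^{-ds}\Lambda_0$ and $t\Lambda_0\supseteq\Lambda_0$. Iterating yields $t^{md}\Lambda_0=\sigma^{-mds}\Lambda_0$ and $t^r\Lambda_0\supseteq\Lambda_0$ for all $r\geq 0$, and since $t$ commutes with $\sigma$ (as $t$ is central in $\cL[t]$) this gives $t^k\Lambda_0\supseteq \sigma^{-\lfloor k/d\rfloor ds}\Lambda_0$ for every $k\geq 0$. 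Choosing $a,b\geq 0$ with $\sigma^a\Lambda_0\subseteq\Lambda\subseteq\sigma^{-b}\Lambda_0$, the desired inclusion $t^k\Lambda\supseteq\sigma^{-1}\Lambda$ reduces to $\sigma^{a-\lfloor k/d\rfloor ds}\Lambda_0\supseteq\sigma^{-b-1}\Lambda_0$, which holds once $\lfloor k/d\rfloor\geq (a+b+1)/(ds)$; taking $k_0:=d\lceil(a+b+1)/(ds)\rceil$ suffices.

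\medskip
The main obstacle, I expect, lies in the correct identification of $b_d$ in part~(1): the naive choice of $b_d$ as the invariant-factor-largest vector (which the index $d$ in the sorted decomposition would suggest) fails outright, and only by running through the valuation bookkeeping does one see that the smallest-invariant-factor direction is the one that works. Everywhere else the argument reduces routinely to careful accounting with the non-commutative valuation, relying crucially on the integrality forced by $s\leq 0$ in part~(1) and the centrality of $t$ in part~(2).
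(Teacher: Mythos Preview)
Your argument is correct and follows the paper's approach closely. The only variation is in part~(1): you compare $\Lambda$ to its $\cO[t]$-saturation $\Lambda^*=\cO[t]\cdot\Lambda\supseteq\Lambda$ (so the relevant extremal invariant is the \emph{minimum} $\nu_1$), whereas the paper compares $\Lambda$ to the cyclic-generator lattice $\Lambda'=\sum_{i<d}\cO\, t^i b$, which need not contain $\Lambda$ and therefore leads to the \emph{maximum} exponent $n_d$ playing the distinguished role---but the elementary-divisor-plus-valuation-tracking mechanism is identical, and part~(2) matches the paper's proof essentially verbatim.
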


\begin{proof}
\begin{enumerate}
\item Since the slope of $N_f$ is non-positive, all the coefficients of $f$ lie in $\cO$. Let $b\in M$ correspond to the residue class of $1$ in $\cL[t]/f\cL[t]$. Let $\Lambda'$
be the $\cO$-lattice of $M$ generated by $b, tb,\ldots, t^{d-1}d$. This is indeed an $\cO[t]$-submodule, since the coefficients of the monic polynomial $f$ lie in $\cO$.

By the elementary divisor theorem (Thm.~\ref{thm:modules-over-pids}\eqref{nj:thm17}), there is a basis $(b_1,\ldots, b_d)$ of $\Lambda$ and $n_1,\ldots, n_d\in \ZZ$ such that $(\sigma^{n_1}b_1,\ldots, \sigma^{n_d}b_d)$ is a basis of $\Lambda'$.\footnote{We take into account that all left ideals of $\cO$ are of the form $\cO\sigma^k$ with $k\in \NN$.}
By rearranging the basis, we can achieve that $n_d$ is the maximum of the numbers $n_i$.

Then for all $k\geq 0$, and $m=\sum_{i=1}^d x_ib_i\in\Lambda$ (i.e. $x_i\in \cO$) we obtain:
\[ t^k\cdot m=\sum_{i=1}^d x_i\cdot t^k\cdot b_i=
\sum_{i=1}^d \sigma^{-n_i}\cdot t^k\cdot \underbrace{\left(\sigma^{n_i}x_i\sigma^{-n_i}\right)}_{\in \cO}\sigma^{n_i}b_i
\in \sigma^{-n_d}\Lambda', \]
since $t^k\cdot \left(\sigma^{n_i}x_i\sigma^{-n_i}\right)\cdot \sigma^{n_i}b_i\in \Lambda'$ for all $i=1,\ldots, d$, and 
$n_d=\max\{ n_i\mid i=1,\ldots, d\}$.

As $\pr_d(\Lambda')=\cO\cdot \sigma^{n_d}b_d$, we finally get:
\[ \pr_d(t^k\cdot m) \in \sigma^{-n_d}\cO\sigma^{n_d}\cdot b_d=\cO\cdot b_d.\]
\item 
Let $\Lambda'$ be an $\cO$-lattice in $M$ as in Theorem \ref{thm:stable-O-lattice}, i.e.~satisfying $\sigma^rt^d\Lambda'=\Lambda'$, as well as $t^{-1}\Lambda'\subseteq \Lambda'$, where $d=\deg_t(f)$, and $\frac{r}{d}=s$ is the slope of the Newton polygon $N_f$.
As $\Lambda$ and $\Lambda'$ are two $\cO$-lattices in $M$, there are some $l_1,l_2\geq 0$ such that
\[ \sigma^{-l_1}\Lambda'\supseteq \Lambda \supseteq \sigma^{l_2}\Lambda'. \]
Let $n_0\in \NN$ satisfy $n_0\geq \frac{1+l_1+l_2}{r}$, and let $k_0=dn_0$. We then obtain for all $k\geq k_0$,
\begin{eqnarray*}
t^k\Lambda &\supseteq & t^k\sigma^{l_2}\Lambda' =  \sigma^{l_2}t^{dn_0}t^{k-k_0}\Lambda' \\
&\supseteq & \sigma^{l_2}t^{dn_0}\Lambda' = \sigma^{l_2}\sigma^{-rn_0}\Lambda' \\
%&=& \sigma^{l_2}t^{k-k_0}\sigma^{-rn_0}\Lambda' \supseteq \sigma^{l_2}\sigma^{rn_0}\Lambda' \\
&\supseteq & \sigma^{l_2}\sigma^{-rn_0}\sigma^{l_1}\Lambda \supseteq \sigma^{-1}\Lambda.
\end{eqnarray*}

\end{enumerate}
\end{proof}

\begin{cor}\label{cor:positive-slopes}
Let $\sigma\in \cL$ be a uniformizer, and let $g_1,\ldots, g_k\in \cL[t]$ be non-trivial monic polynomials whose Newton polygons only have positive slopes.
Further, let  $M$ be a left $\cL[t]$-module isomorphic to $\bigoplus_{i=1}^k\cL[t]/g_i\cL[t]$, and let $\Lambda\subset M$ be an $\cO$-lattice in $M$.
Then there is a natural number $n_0\in \NN$, such that for all $n\geq n_0$,
\[  t^n\Lambda\supseteq \sigma^{-1}\Lambda. \]
\end{cor}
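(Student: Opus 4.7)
The plan is to reduce to the single-edge case of Theorem~\ref{thm:one-slope-modules}\eqref{thm:item:2}, build a convenient reference lattice $\Lambda^*$ on which the iteration is easy, and then transport the conclusion to the arbitrary lattice $\Lambda$ via the standard lattice comparison in Remark~\ref{rem:lattices}.

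First, I would refine the decomposition of $M$. Since each $g_i$ has a Newton polygon with positive slopes only, applying Proposition~\ref{prop:factoring-by-slopes} (cf.~Remark~\ref{rem:right-factor-for-each-edge}) and Theorem~\ref{thm:chinese-remainder-theorem} to each factor $\cL[t]/g_i\cL[t]$ yields an isomorphism $M\cong \bigoplus_{j=1}^{k'} \cL[t]/f_j\cL[t]$, where each $f_j$ has a one-edge Newton polygon of positive slope $s_j$. Let $M_j\subset M$ be the submodule corresponding to the $j$-th summand. By Corollary~\ref{cor:stable-O-lattices}, I may choose for every $j$ an $\cO$-lattice $\Lambda_j\subset M_j$ satisfying $t^{-1}\Lambda_j\subseteq \Lambda_j$. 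Put $\Lambda^*:=\bigoplus_j \Lambda_j$; this is an $\cO$-lattice in $M$ with $t^{-1}\Lambda^*\subseteq \Lambda^*$, so the chain $(t^n\Lambda^*)_{n\geq 0}$ is increasing. By Theorem~\ref{thm:one-slope-modules}\eqref{thm:item:2} applied to each $\Lambda_j$, there exist $k_0^{(j)}\in\NN$ with $t^k\Lambda_j\supseteq \sigma^{-1}\Lambda_j$ for all $k\geq k_0^{(j)}$; setting $k_0:=\max_j k_0^{(j)}$ yields $t^k\Lambda^*\supseteq \sigma^{-1}\Lambda^*$ for all $k\geq k_0$.

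Next, I iterate. Since $t$ is a central indeterminate of $\cL[t]$ it commutes with $\sigma$, so for any $m\geq 1$
\[ t^{mk_0}\Lambda^* = t^{(m-1)k_0}\bigl(t^{k_0}\Lambda^*\bigr) \supseteq t^{(m-1)k_0}\sigma^{-1}\Lambda^* = \sigma^{-1} t^{(m-1)k_0}\Lambda^*, \]
and an induction on $m$ gives $t^{mk_0}\Lambda^*\supseteq \sigma^{-m}\Lambda^*$. Combined with the monotonicity of $(t^n\Lambda^*)_n$, this yields $t^n\Lambda^*\supseteq \sigma^{-m}\Lambda^*$ for every $n\geq mk_0$. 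Finally, since $\Lambda$ and $\Lambda^*$ are both $\cO$-lattices in $M$, Remark~\ref{rem:lattices} supplies integers $l_1,l_2\geq 0$ with $\sigma^{l_2}\Lambda^*\subseteq \Lambda \subseteq \sigma^{-l_1}\Lambda^*$. Taking $m:=l_1+l_2+1$ and $n_0:=mk_0$, for all $n\geq n_0$:
\[ t^n\Lambda \;\supseteq\; t^n\sigma^{l_2}\Lambda^* \;=\; \sigma^{l_2}t^n\Lambda^* \;\supseteq\; \sigma^{l_2-m}\Lambda^* \;=\; \sigma^{-l_1-1}\Lambda^* \;\supseteq\; \sigma^{-1}\Lambda, \]
which is the desired inclusion. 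The only subtle point I expect is securing the $t^{-1}$-stability of the reference lattice $\Lambda^*$ used for the iteration in the middle step; this is precisely the additional clause that Corollary~\ref{cor:stable-O-lattices} provides in the positive-slope case, and without it the iteration of the one-edge estimate would not give a uniform statement for all sufficiently large $n$.
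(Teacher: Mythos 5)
Your proof is correct, and the overall architecture matches the paper's: decompose $M$ into one-edge summands (the paper uses Theorem~\ref{thm:decomposition-in-one-slopes} directly), invoke Theorem~\ref{thm:one-slope-modules}\eqref{thm:item:2} on each summand, and finish by comparing two $\cO$-lattices in $M$. The one genuine variation is your choice of reference lattice. The paper works with $\Lambda':=\bigoplus_j(\Lambda\cap M_j)\subseteq\Lambda$, which is not assumed $t^{-1}$-stable, and still iterates the one-edge estimate to get the uniform inclusion; you instead construct $\Lambda^*$ via Corollary~\ref{cor:stable-O-lattices} precisely to secure $t^{-1}\Lambda^*\subseteq\Lambda^*$ and the resulting monotonicity of $(t^n\Lambda^*)_{n\geq 0}$. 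That is perfectly valid, but your closing remark overstates the role of $t^{-1}$-stability: it is convenient, not necessary. From $t^k\Lambda'\supseteq\sigma^{-1}\Lambda'$ for all $k\geq k_0$ alone one already gets $t^n\Lambda'\supseteq\sigma^{-m}\Lambda'$ for all $n\geq mk_0$, by writing $n=(m-1)k_0+n_1$ with $n_1\geq k_0$ and iterating
\[
 t^n\Lambda' = t^{n_1}\bigl(t^{(m-1)k_0}\Lambda'\bigr)\supseteq t^{n_1}\sigma^{-(m-1)}\Lambda' =\sigma^{-(m-1)}\, t^{n_1}\Lambda'\supseteq\sigma^{-m}\Lambda',
\]
using the centrality of $t$ and induction on $m$; the lattice comparison then finishes exactly as in your last display. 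So your extra ingredient from Corollary~\ref{cor:stable-O-lattices} buys a cleaner iteration but is not essential for the uniformity.
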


\begin{proof}
By Thm.~\ref{thm:decomposition-in-one-slopes}, we can decompose the factors $\cL[t]/g_i\cL[t]$ further to obtain a decomposition 
$M\cong \bigoplus_{j=1}^r \cL[t]/f_j\cL[t]$ where the Newton polygons of all $f_j$ have exactly one edge, and the slope of that edge is positive.
 
Let $\Lambda^{(j)}:=\Lambda\cap \cL[t]/f_j\cL[t]$ for all $j=1,\ldots, r$, and 
\[\Lambda':=\bigoplus_{j=1}^r \Lambda^{(j)}\subseteq \Lambda.\]
Applying Thm.~\ref{thm:one-slope-modules}\eqref{thm:item:2} to each factor, we obtain numbers $n_1,\ldots,n_r\in \NN$ such that for each $i=1,\ldots,r$, one has $t^{n}\Lambda^{(i)}\supseteq \sigma^{-1}\Lambda^{(i)}$ for $n\geq n_i$, and hence
$n':=\max\{n_1,\ldots, n_r\}$ fulfills
\[ t^{n}\Lambda'\supseteq  \sigma^{-1}\Lambda' \quad \forall\, n\geq n'.\]
As $\Lambda'$ and $\Lambda$ are two lattices in $M$, we can proceed similar to the end of the proof of Thm.~\ref{thm:one-slope-modules} \eqref{thm:item:2} to obtain some $n_0\in \NN$ such that for all $n\geq n_0$,
\[ t^{n}\Lambda\supseteq  \sigma^{-1}\Lambda. \qedhere \]
\end{proof}

\section{Matrices over complete discretely valued skew fields}\label{sec:matrices}

Let $\cL$ be a complete discretely valued skew field, and $\cO$ its valuation ring with uniformizer~$\sigma$. Let $d\geq 1$, and $D\in \Mat_{d\times d}(\cL)$ a nonzero matrix. Further, we let $\lambda_1,\ldots,\lambda_d\in \cL[t]$ be its invariant factors, i.e.~polynomials $\lambda_1|\lambda_2|\ldots|\lambda_d$ obtained by diagonalizing the matrix $t\cdot \one_d-D$ as in Remark \ref{rem:invariant-factor-computation}.

%
%Furthermore, let $\Lambda$ be a free left $\cO$-module of rank $d$, and $M=\cL\otimes_{\cO} \Lambda$.
%
%Throughout this section, $\phi:M\to M$ is a fixed endomorphism of $\cL$-modules. Given a $\cL$-basis $(e_1,\ldots,e_d)$ of $M$, this endomorphism can be represented by a matrix $D\in \Mat_{d\times d}(\cL)$ which is defined by
%\[ \phi(e_i)= \sum_{j=1}^d D_{ij} e_j, \quad \text{for all }i=1,\ldots, d, \]
%or in matrix form
%\[ \phi \svect{e}{d} = D\cdot \svect{e}{d}. \]
%
%We further consider $M$ as a module over the polynomial ring $\cL[t]$ where $t$ acts by applying~$\phi$.

In this section, we apply the results of the previous sections to obtain the equivalence of the following conditions on $D$ and its powers which will be used in the next sections.

\begin{thm}\label{thm:equvialent-matrix-conditions}
Let $D\in \Mat_{d\times d}(\cL)$, and let $\lambda_d\in \cL[t]$ be its last invariant factor as above. For $n\geq 1$, we define $s_n\in \NN$ to be the least non-negative integer such that all entries of $D,D^2,\ldots, D^n$ have valuation greater or equal to $-s_n$. The following are equivalent:
\begin{enumerate}
\item \label{item:1} There is some $n\geq 1$ such that the matrix $\sigma^{s_n}\cdot D^n\in \Mat_{d\times d}(\cO)$ has full rank $d$ modulo $\sigma^{s_n}$,\footnote{See Definition \ref{def:rank-modulo-sigma-s} for our notion of rank modulo some power of $\sigma$.}
\item \label{item:2} there is some $n\geq 1$ such that the block matrix 
$\begin{smatrix}\sigma^{s_n}\cdot D\\ \sigma^{s_n}\cdot D^2\\ \vdots \\ \sigma^{s_n}\cdot D^n\end{smatrix} 
 \in \Mat_{nd\times d}(\cO)$ has rank $d$ modulo $\sigma^{s_n}$,
\item[(2')] \label{item:2prime} there is some $n\geq 1$ such that the block matrix 
$\begin{pmatrix}\sigma^{s_n}\cdot D, & \sigma^{s_n}\cdot D^2,& \ldots,& \sigma^{s_n}\cdot D^n\end{pmatrix} 
 \in \Mat_{d\times nd}(\cO)$ has rank $d$ modulo $\sigma^{s_n}$,
\item \label{item:3} all slopes of the Newton polygon of $\lambda_d$ are positive.
\end{enumerate}
\end{thm}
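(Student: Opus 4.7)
The plan is to first translate each of conditions (1), (2), (2') into a purely module-theoretic statement about the $\cO$-lattice $\Lambda := \cO^d$ inside the left $\cL[t]$-module $M := \cL^d$ on which $t$ acts via $D$. Using the Smith normal form from Cor.~\ref{cor:diagonalising-over-valuation-ring}, together with the observation that for $B \in \Mat_{m\times k}(\cO)$ having rank $k$ modulo $\sigma^s$ amounts to $B$ being injective and $B^{-1}(\sigma^s\cO^m) \subseteq \sigma\cO^k$, one obtains the dictionary:
\begin{itemize}
\item (1) $\Leftrightarrow$ $t^n\Lambda \supseteq \sigma^{-1}\Lambda$ for some $n \geq 1$;
\item (2) $\Leftrightarrow$ for some $n$, $\{v \in \Lambda : t^iv \in \Lambda \text{ for } i = 1,\ldots,n\} \subseteq \sigma\Lambda$ (which implicitly forces $D$ to be invertible);
\item (2') $\Leftrightarrow$ $\sigma^{-1}\Lambda \subseteq t\Lambda + t^2\Lambda + \cdots + t^n\Lambda$ for some $n$.
\end{itemize}
The key identity driving these translations is $\sigma^{s_n}D^iv \in \sigma^{s_n}\cO^d \iff t^iv \in \Lambda$, obtained by left-multiplying across the factor $\sigma^{s_n}$.

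With the dictionary in hand, (1) $\Rightarrow$ (2) and (1) $\Rightarrow$ (2') are transparent: the containment $t^n\Lambda \supseteq \sigma^{-1}\Lambda$ immediately yields both successor statements (via $\sigma\Lambda \supseteq t^{-n}\Lambda$ for (2), and by including $t^n\Lambda$ inside the sum $t\Lambda + \cdots + t^n\Lambda$ for (2')). For (3) $\Rightarrow$ (1), I apply Thm.~\ref{thm:decomposition-in-one-slopes} to decompose $M \cong \bigoplus_j \cL[t]/f_j\cL[t]$ into one-slope summands; because each invariant factor $\lambda_i$ left- and right-divides $\lambda_d$, the slopes of every $\lambda_i$ appear among those of $N_{\lambda_d}$, so assumption (3) forces every $f_j$ to have positive slope, and Cor.~\ref{cor:positive-slopes} applied to $\Lambda$ yields $t^n\Lambda \supseteq \sigma^{-1}\Lambda$ for all sufficiently large $n$.

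The substantive direction is $(2) \Rightarrow (3)$ together with $(2') \Rightarrow (3)$, which I prove by contraposition. Assume some edge of $N_{\lambda_d}$ has slope $s \leq 0$. By Thm.~\ref{thm:decomposition-in-one-slopes} there is a direct $\cL$-summand $N \subseteq M$ whose Newton polygon is exactly this one edge. Thm.~\ref{thm:stable-O-lattice} supplies an $\cO$-lattice $\Lambda^{(N)} \subset N$ with $\sigma^{ds}t^d\Lambda^{(N)} = \Lambda^{(N)}$; since $s \leq 0$ this rewrites as $t^d\Lambda^{(N)} \subseteq \Lambda^{(N)}$, so the enlarged lattice $\Lambda^{(N)}_0 := \Lambda^{(N)} + t\Lambda^{(N)} + \cdots + t^{d-1}\Lambda^{(N)}$ is $t$-stable. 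To refute (2), I pick the minimal $l$ with $\sigma^l\Lambda^{(N)}_0 \subseteq \Lambda$; minimality yields some $v \in \sigma^l\Lambda^{(N)}_0 \setminus \sigma\Lambda$, and since $t$ is central in $\cO[t]$ the rescaled lattice $\sigma^l\Lambda^{(N)}_0$ remains $t$-stable, so $t^iv \in \sigma^l\Lambda^{(N)}_0 \subseteq \Lambda$ for every $i$, contradicting (2) for any $n$. To refute (2'), I iterate the relation $\sigma^{-1}\Lambda \subseteq t\Lambda + \cdots + t^n\Lambda$ (moving $\sigma^{-1}$ past central $t$'s) to $\sigma^{-k}\Lambda \subseteq t^k\Lambda + \cdots + t^{kn}\Lambda$ for every $k \geq 1$, then project via $\pi_N\colon M \to N$: since each $t^j\pi_N(\Lambda)$ lies in the fixed lattice $\sigma^{-l'}\Lambda^{(N)}_0$ (because $\pi_N(\Lambda) \subseteq \sigma^{-l'}\Lambda^{(N)}_0$ and $\Lambda^{(N)}_0$ is $t$-stable), one deduces $\pi_N(\Lambda) \subseteq \sigma^{k-l'}\Lambda^{(N)}_0$ for all $k$, forcing $\pi_N(\Lambda) = 0$. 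But $\pi_N(\Lambda)$ is a lattice in $N \neq 0$, a contradiction.

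The main obstacle is establishing the dictionary cleanly in the skew setting, where the interplay between left-multiplication by $\sigma^{s_n}$ and the Smith form of a rectangular matrix over the noncommutative PID $\cO$ must be tracked with care — in particular, one repeatedly uses that $\cO$ is stable under conjugation by $\sigma$, so scaled submodules like $\sigma^l\Lambda^{(N)}_0$ remain $\cO$-submodules, and that $t$ is central in $\cO[t]$, so $t$-stability survives scaling by powers of $\sigma$. Once the dictionary is in place, the rest of the proof reduces to the known $t$-stability from Thm.~\ref{thm:stable-O-lattice} combined with the intersection vanishing $\bigcap_k \sigma^k\Lambda^{(N)}_0 = 0$ valid for any lattice.
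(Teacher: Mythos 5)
Your proof is correct, and the overall architecture matches the paper's in the non-contraposition directions: the paper also establishes $(3)\Rightarrow(1)$ by decomposing $M$ via Theorem~\ref{thm:decomposition-in-one-slopes} and invoking Corollary~\ref{cor:positive-slopes} to get $t^n\Lambda\supseteq\sigma^{-1}\Lambda$, then translating back to the matrix statement via $BD^n=\sigma^{-1}\one_d$; and the implications $(1)\Rightarrow(2)$ and $(1)\Rightarrow(2')$ are dismissed as trivial there. Your explicit module-theoretic dictionary is a useful addition that makes those trivial implications genuinely transparent rather than merely asserted.

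Where you genuinely diverge from the paper is in the contrapositive $(2)\Rightarrow(3)$ and $(2')\Rightarrow(3)$. The paper's proof decomposes $M$, isolates the non-positive slope factor, and then uses Proposition~\ref{prop:integral-submodule} together with the ``projection-to-last-coordinate'' part of Theorem~\ref{thm:one-slope-modules}\eqref{thm:item:1} to produce a particular $\cO$-basis of $\Lambda$ in which the last column of every $C^k$ lies in $\cO$; passing back to $D$ by conjugation, the block matrix is then seen to have rank at most $d-1$ modulo $\sigma^s$. Your proof instead applies Theorem~\ref{thm:stable-O-lattice} to the non-positive slope summand $N$ to obtain a lattice $\Lambda^{(N)}$ with $t^d\Lambda^{(N)}\subseteq\Lambda^{(N)}$, builds the $t$-stable lattice $\Lambda^{(N)}_0=\sum_{i=0}^{d-1}t^i\Lambda^{(N)}$, and then derives contradictions against the module-theoretic reformulations of (2) and (2') directly: for (2) by a minimality argument producing a vector $v\in\Lambda\setminus\sigma\Lambda$ with $t^iv\in\Lambda$ for all $i$, and for (2') by iterating the inclusion, projecting to $N$, and deducing $\pi_N(\Lambda)\subseteq\bigcap_k\sigma^{k-l'}\Lambda^{(N)}_0=0$. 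This stays entirely at the lattice level and never needs the change of basis, Proposition~\ref{prop:integral-submodule}, or Theorem~\ref{thm:one-slope-modules}\eqref{thm:item:1}; the trade-off is that one must take a little care that the dictionary is stated in a left/right-consistent way (since (2) is naturally the left-module condition and (2') the right-module one), and that $\pi_N(\Lambda)$ is indeed a lattice in $N$ (which follows because $\pi_N$ is a surjective $\cL$-linear projection and $\Lambda$ is finitely generated and contains a basis of $M$). Both routes are sound; yours is arguably cleaner once the dictionary is in place.
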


\begin{rem}
In the first three conditions, we could replace $s_n$ by any larger number. This would lead to an equivalent condition. This fact becomes clear during the proof of the theorem.
\end{rem}

\begin{proof}
The implications \eqref{item:1}$\Rightarrow$\eqref{item:2} and \eqref{item:1}$\Rightarrow$
%\eqref{item:2prime}
(2') are trivial. We show \eqref{item:2}$\Rightarrow$\eqref{item:3} by contraposition. So assume that the Newton polygon of $\lambda_d$ has an edge of non-positive slope.

We let $\Lambda$ be a free $\cO$-module of rank $d$, and $(e_1,\ldots,e_d)$ be a basis. Further, let 
$M=\cL\otimes_{\cO} \Lambda$,
and define an $\cL$-linear $t$-action on it by letting
\[ t\cdot  \svect{e}{d} = D\cdot \svect{e}{d} \]
component-wise. This turns $M$ into a left-$\cL[t]$-module.
By Thm.~\ref{thm:elementary-divisor-theorem}, $M$ can be decomposed into a direct sum of $\cL[t]$-modules $\bigoplus_{i=1}^d \cL[t]/\lambda_i\cL[t]$ where the $\lambda_i$ are the invariant factors of $D$.
By Thm.~\ref{thm:decomposition-in-one-slopes}, we can decompose these further into
\[ M \cong \bigoplus_{i=1}^r \cL[t]/f_i\cL[t], \]
where the Newton polygons of each $f_i$ has exactly one edge. Without loss of generality, we can assume
that $f_r$ is a factor of $\lambda_d$ whose Newton polygon consists of one edge with non-positive slope.
Let $M'\subset M$ be the submodule corresponding to the sum $\bigoplus_{i=1}^{r-1} \cL[t]/f_i\cL[t]$, and $\Lambda':= \Lambda\cap M'$. Then by Prop.~\ref{prop:integral-submodule}, the quotient $\Lambda'':=\Lambda/\Lambda'$ is free, and $\cL\otimes_{\cO} \Lambda''$ has to be isomorphic to the last factor $\cL[t]/f_r\cL[t]$ of $M$.

We choose a basis $(b_1,\ldots,b_m,b_{m+1},\ldots, b_d)$ of $\Lambda$ in the following way: $(b_1,\ldots,b_m)$ is a basis of $\Lambda'$, and $(b_{m+1},\ldots, b_d)$ is a lift of a basis $(\bar{b}_{m+1},\ldots, \bar{b}_d)$ of $\Lambda''$ where $(\bar{b}_{m+1},\ldots, \bar{b}_d)$ is chosen as in Thm.~\ref{thm:one-slope-modules}\eqref{thm:item:1}. By this choice of basis, as in Thm.~\ref{thm:one-slope-modules}, we obtain $\pr_d(t^k\cdot m)\in \cO\cdot b_d$ for all $k\geq 0$ and all $m\in \Lambda$ where $\pr_d:M\to \cL\cdot b_d$ denotes the projection to the last coordinate.

If $C\in \Mat_{d\times d}(\cL)$ is the matrix representing the $t$-action on this basis, i.e. given by
\[ t\cdot  \svect{b}{d} = C\cdot \svect{b}{d}, \]
this implies that the last column of each $C^k$ ($k\geq 0$) has entries in $\cO$. 
In particular, for $n\geq 1$, and for $s\geq 0$ such that all entries of all $C, C^2\ldots, C^n$ have valuation at least $-s$,
the entries of the last column of 
\[ \begin{smatrix}\sigma^{s}\cdot C\\ \sigma^{s}\cdot C^2\\ \vdots \\ \sigma^{s}\cdot C^n\end{smatrix}\in \Mat_{nd\times d}(\cO) \]
are divisible by $\sigma^{s}$. This implies that the rank modulo $\sigma^{s}$ of this block matrix does not exceed $d-1$.

As $(e_1,\ldots, e_d)$ and $(b_1,\ldots, b_d)$ are two bases of $\Lambda$, there exists a unique matrix $B\in \GL_d(\cO)$ such that
\[ \svect{b}{d} = B\cdot \svect{e}{d}, \]
and hence, $C=BDB^{-1}$. On one hand, this means that $s$ above can be chosen to be $s_n$, on the other hand that the rank modulo $\sigma^{s}$ of
\[ \begin{smatrix}\sigma^{s}\cdot D\\ \sigma^{s}\cdot D^2\\ \vdots \\ \sigma^{s}\cdot D^n\end{smatrix}\in \Mat_{nd\times d}(\cO) \]
does not exceed $d-1$, since we have
\[ \begin{smatrix}\sigma^{s}\cdot D\\ \sigma^{s}\cdot D^2\\ \vdots \\ \sigma^{s}\cdot D^n\end{smatrix}
= \scalar{\sigma^{s}B^{-1}\sigma^{-s}}\cdot \begin{smatrix}\sigma^{s}\cdot C\\ \sigma^{s}\cdot C^2\\ \vdots \\ \sigma^{s}\cdot C^n\end{smatrix}\cdot B\in \Mat_{nd\times d}(\cO).\]

The implication %\eqref{item:2prime}
(2')$\Rightarrow$\eqref{item:3} is shown in the same manner, but using right $\cO$-modules.

It remains to show the implication \eqref{item:3}$\Rightarrow$\eqref{item:1}. So we assume that all slopes of the Newton polygon of $\lambda_d$ are positive. As the other invariant factors are divisors of $\lambda_d$ this implies that all slopes of all the Newton polygons are positive.

As above, we consider the free $\cO$-module $\Lambda$ of rank $d$ with basis $(e_1,\ldots,e_d)$,
and the $\cL$-vector space $M=\cL\otimes_{\cO} \Lambda$ with additional $t$-action given by
\[ t\cdot  \svect{e}{d} = D\cdot \svect{e}{d}. \]
Again by Thm.~\ref{thm:decomposition-in-one-slopes}, we can decompose $M$ into
\[ M \cong \bigoplus_{i=1}^r \cL[t]/f_i\cL[t], \]
where the Newton polygons of each $f_i$ has exactly one edge, but this time all slopes are positive.
By Cor.~\ref{cor:positive-slopes}, there is some $n\in \NN$ such that 
$t^n\Lambda\supseteq \sigma^{-1}\Lambda$.

Hence, there is a matrix $B\in \Mat_{d\times d}(\cO)$ such that
\[ BD^n\cdot \svect{e}{d} = B\cdot t^n \svect{e}{d}= \sigma^{-1}\svect{e}{d} =\scalar{ \sigma^{-1}}
\cdot \svect{e}{d}.\]
Therefore we have
\[ \left(\sigma^{s_n}B\sigma^{-s_n}\right)\cdot \sigma^{s_n}D^n = \scalar{\sigma^{s_n-1}}\in \Mat_{d\times d}(\cO).\]
As $\sigma^{s_n}B\sigma^{-s_n}\in \Mat_{d\times d}(\cO)$, and the right hand side has full rank $d$ modulo $\sigma^{s_n}$, also $\sigma^{s_n}D^n\in \Mat_{d\times d}(\cO)$ has full rank $d$ modulo $\sigma^{s_n}$.
\end{proof}

\section{$t$-modules and $t$-motives} \label{sec:t-modules}

From now on, let $\Fq$ be the finite field with $q$ elements, and let 
%$p$ be its characteristic. Let 
$K$ be a perfect field containing $\Fq$.
Further, let $\Fq[t]$ be a polynomial ring over $\Fq$ in an indeterminate $t$ which is linearly independent to $K$, and $\ell:\Fq[t]\to K$ a homomorphism of $\Fq$-algebras.

%$\e:\Lie(E)(\CC_\infty)\to E(\infty)$ exponential map, is unique with the properties 
%\[\forall a\in A, x\in\Lie(E)(\CC_\infty):\ \phi_a(\e(x))=\e(\dphi_a(x)) \]
%and $d(\e)=\id_{\Lie(E)}$

As in the introduction, we denote by $K\{\tau\}$ the skew polynomial ring
\[  K\sp{\tau}=\left\{ \sum_{i=0}^n \alpha_i\tau^i \,\middle|\, n\geq 0, \alpha_i\in K\right\} \]
with multiplication uniquely given by additivity and the rule
\[ \tau \cdot \alpha = \alpha^q\cdot \tau, \]
for all $\alpha\in K$, i.e., 
\[ \left( \sum_{i=0}^n \alpha_i\tau^i\right) \cdot \left(\sum_{j=0}^m \beta_j\tau^j\right)
= \sum_{k=0}^{n+m} \left( \sum_{i=0}^k \alpha_i\cdot (\beta_{k-i})^{q^i} \right) \cdot \tau^k.  \]
This ring equals the ring $\End_{\grp,\Fq}(\Ga)$ of $\Fq$-linear group endomorphisms of $\Ga$ by identifying $\tau$ with the $q$-th power Frobenius map, and $\alpha\in K$ with scalar multiplication by~$\alpha$.

We further consider the skew Laurent series ring over $K$ in $\sigma=\tau^{-1}$,
\[ K\sls{\sigma} :=\left\{ \sum_{i=i_0}^\infty \alpha_i\sigma^i \,\middle|\, i_0\in \ZZ, \alpha_i\in K\right\}, \]
with $\sigma \cdot \alpha = \alpha^{1/q}\cdot \sigma$ for all $\alpha\in K$ (well defined as $K$ is assumed to be perfect), as well as the subring of skew power series in $\sigma$,
\[ K\sps{\sigma} :=\left\{ \sum_{i=0}^\infty \alpha_i\sigma^i \,\middle|\, \alpha_i\in K\right\}. \]

We equip the ring $K\sls{\sigma}$ with the discrete valuation $v$ given by
\[ v\left( \sum_{i=i_0}^\infty \alpha_i\sigma^i\right) = \inf\{ i\in \ZZ \mid \alpha_i\ne 0\}, \]
i.e., the order of the series in $\sigma$. This turns $K\sls{\sigma}$ into a complete discretely valued skew field with valuation ring  $K\sps{\sigma}$ and uniformizer $\sigma$.
The ring $K\sp{\tau}$ is naturally embedded into $K\sls{\sigma}$ via
\[ \sum_{i=0}^n \alpha_i\tau^i \mapsto \sum_{i=0}^n \alpha_i\sigma^{-i}. \]

For $x=\sum_{i=i_0}^\infty \alpha_i\sigma^i\in K\sls{\sigma}$, and $k\in \ZZ$, we set its $k$-th  twist to be 
\[ x^{(k)}:=\sum_{i=i_0}^\infty \alpha_i^{q^k}\sigma^i.\]
Also for matrices $B\in \Mat_{d\times e}(K\sls{\sigma})$, and $k\in \ZZ$, we denote by
$B^{(k)}$ the matrix whose $(i,j)$-th entry is the $k$-th twist of the $(i,j)$-th entry of $B$.

For a matrix $B\in \Mat_{d\times e}(K\sls{\sigma})$, we write $v(B)$ for the infimum of all valuations $v(B_{i,j})$ of entries of $B$.

\medskip

A $t$-module $(E,\phi)$ over $K$ of dimension $d$ is by definition an $\Fq$-vector space scheme $E$ over $K$ isomorphic to $\Ga^d$ together with a homomorphism of $\Fq$-algebras $\phi:\Fq[t]\to \End_{\grp,\Fq}(E)$ into the ring of $\Fq$-vector space scheme endomorphisms of $E$, such that for all $a\in \Fq[t]$, the endomorphism $\dphi_a$ on $\Lie(E)$ induced by $\phi_a$ fulfills the condition that $\dphi_a-\ell(a)$ is nilpotent.

Throughout this and the next sections, we fix a $t$-module $(E,\phi)$  over $K$ of dimension $d$, as well as a coodinate system $\kappa$, i.e., an isomorphism of $\Fq$-vector space schemes $\kappa:E\cong \Ga^d$ defined over $K$.

With respect to this coordinate system, we can represent $\phi_t$ by a matrix $D\in \Mat_{d\times d}(K\{\tau\})$. Formally, the endomorphism $\tilde{\phi}_t:=\kappa \circ \phi_t\circ \kappa^{-1}$ is given as
\[ \tilde{\phi}_t\svect{x}{d} = D\cdot \svect{x}{d}, \]
for all $\transp{(x_1,\ldots, x_d)}\in \Ga^d(K)$.

Later we will use the maximal $\tau$-degree of entries of $D$, and we will shortly write $\deg_{\tau}(\phi_t)$ for this number.\footnote{Be aware that our notation is a bit lazy, as that number depends on the chosen coordinate system in general. However, this doesn't cause any trouble, as we fix one coordinate system throughout the paper.} 
In the same way for any $a\in \Fq[t]\smallsetminus \Fq$, we denote by $\deg_{\tau}(\phi_a)$ the maximal $\tau$-degree of entries of the matrix representing $\tilde{\phi}_a$.

Let $\kappa_i:E\to \Ga$ be the $i$-th component of $\kappa$, i.e.~the composition of $\kappa$ with the projection $\pr_i:\Ga^d\to \Ga$ to the $i$-th component of $\Ga^d$. 
Then the tuple $(\kappa_1,\ldots, \kappa_d)$ is a $K\{\tau\}$-basis of the $t$-motive of $E$,
\[ \mot:=\Hom_{\grp,\Fq}(E,\Ga). \]
The dual $t$-motive associated to $E$ is
\[ \dumot:=\Hom_{\grp,\Fq}(\Ga,E). \]
It is usually considered as a left $K\{\sigma\}[t]$-module, where the $t$-action is given by composition with $\phi_t$, and the left-$K\{\sigma\}$-action stems from the natural right action of $K\{\tau\}=\End_{\grp,\Fq}(\Ga)$, by considering $K\{\sigma\}$ as the opposite ring of $K\{\tau\}$.
In this paper, however, we stick to considering $\dumot$ with the natural right-$K\{\tau\}$-action, and will also write the $t$-action from the right. 
Given the choice of coordinate system $\kappa$ above, a $K\{\tau\}$-basis of  $\dumot$ is given by
$(\dk_1,\ldots, \dk_d)$, where $\dk_j:\Ga\to E$ is the composition of the injection $\inj_j:\Ga\to \Ga^d$ into the $j$-th component with $\kappa^{-1}$ for all $j=1,\ldots, d$. The situation is depicted in the following commutative diagram.
\[ \xymatrix{
 & & \Ga \\
 E \ar[r]^{\kappa}_{\cong} \ar@/^0.5pc/[rru]^{\kappa_i} & \Ga^d \ar[ru]_{\pr_i} & \\
 & & \Ga \ar[ul]_{\inj_j} \ar[uu]_{\delta_{ij}\cdot \id_{\Ga}} \ar@/^0.5pc/[ull]^{\dk_j}
} \]
%We also remark that the identity
%\[ \sum_{j=1}^d \dk_j\circ \kappa_j = \id_E \]
%holds.
A short calculation shows that via these bases, the $t$-action on the $t$-motive is described by %\footnote{Is that common knowledge? Student exercise? Or should I prove it?}
\begin{equation} \label{eq:t-action-on-mot} 
t\cdot \svect{\kappa}{d} = D\cdot \svect{\kappa}{d}, 
\end{equation}
and the $t$-action on the dual $t$-motive is described by
\begin{equation}\label{eq:t-action-on-dumot} 
 \zvect{\dk}{d}\cdot t = \zvect{\dk}{d}\cdot D.
\end{equation}

We will usually consider $D$ as a matrix with coefficients in $K\sls{\sigma}\supseteq K\sp{\tau}$, and recognize that the maximum of the $\tau$-degrees of the entries of $D$ in $K\sp{\tau}$ is nothing else than the additive inverse of the valuation $v(D)$ of $D\in K\sls{\sigma}$.

Therefore, when $s$ is the maximal $\tau$-degree of entries of $D$, the matrix $\sigma^sD\in \Mat_{d\times d}(K\sls{\sigma})$ has entries of non-negative valuation, hence $\sigma^sD\in \Mat_{d\times d}(K\sps{\sigma})$, and even in $\Mat_{d\times d}(K\{\sigma\})$.

\section{Criterion for abelian and $t$-finite $t$-modules}\label{sec:criterion}

In this section, we prove our main theorem \ref{mainthm:equivalence} on abelian and $t$-finite $t$-modules.

We use the notion of the previous section.

\begin{prop}\label{prop:sufficient-criterion}
Assume that there exists $a\in\Fq[t]\smallsetminus \Fq$, $s:=\deg_{\tau}(\phi_a)$ such that the matrix  representing $\sigma^s\cdot \phi_a$  has full rank modulo $\sigma^{s}$. Then the following hold.
\begin{enumerate}
\item \label{prop:sufficient-criterion:item:1} $E$ is abelian, i.e.~$\mot$ is a finitely generated $K[t]$-module.
\item \label{prop:sufficient-criterion:item:2} $E$ is $t$-finite, i.e.~$\dumot$ is a finitely generated $K[t]$-module.
\end{enumerate}
\end{prop}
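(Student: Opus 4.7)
My plan is to identify the hypothesis as condition~(1) of Theorem~\ref{thm:equvialent-matrix-conditions} (with $n=1$), applied not to $D=\phi_t$ but to $D=\phi_a$ viewed in $\Mat_{d\times d}(K\sls{\sigma})$ through the embedding $K\{\tau\}\hookrightarrow K\sls{\sigma}$. Since $-v(\phi_a)=\deg_\tau(\phi_a)=s$, one has $s_1=s$ in that theorem, and the assumption that $\sigma^{s}\phi_a$ has full rank modulo $\sigma^{s}$ is precisely condition~(1). Hence condition~(3) also holds: the Newton polygon of the last invariant factor of $\phi_a$ (computed over $K\sls{\sigma}$) has only strictly positive slopes, and by divisibility the same is true of every invariant factor.

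Next, consider $\bar\mot:=K\sls{\sigma}\otimes_{K\{\tau\}}\mot$ as a $K\sls{\sigma}[a]$-module with $a$ acting through $\phi_a$. By Theorem~\ref{thm:decomposition-in-one-slopes} (applied with the central indeterminate $a$ in place of $t$) one obtains $\bar\mot\cong\bigoplus_{l}K\sls{\sigma}[a]/f_l K\sls{\sigma}[a]$ where every Newton polygon $N_{f_l}$ consists of a single edge of strictly positive slope, and Corollary~\ref{cor:positive-slopes} (again with $a$ in place of $t$) supplies some $n_0\in\NN$ such that
\[
\phi_a^{n}\,\bar\Lambda \;\supseteq\; \sigma^{-1}\bar\Lambda \qquad \text{for all } n\ge n_0,
\]
where $\bar\Lambda:=\bigoplus_i K\sps{\sigma}\bar\kappa_i$ is the standard $K\sps{\sigma}$-lattice in $\bar\mot$. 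The right-module analogue yields the same lattice inequality for the corresponding scalar extension of $\dumot$.

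The remaining task is to convert this lattice inequality into $K[t]$-finite generation of $\mot$ (abelianness) and of $\dumot$ ($t$-finiteness). I would use two supplementary facts: first, because $\phi_a$ is invertible in $\Mat_{d\times d}(K\sls{\sigma})$, its invariant factors over the skew PID $K\{\tau\}$ are all non-zero, so Theorem~\ref{thm:modules-over-pids}\eqref{nj:thm19} forces $\mot/\phi_a\mot$ and $\dumot/\phi_a\dumot$ to be finite-dimensional over $K$; second, writing $\mot^{\circ}$ for the image of $\mot$ in $\bar\mot$, one has $\mot^{\circ}\cap\sigma^{-k}\bar\Lambda=F_k\mot^{\circ}$ (the $\tau$-degree-$\le k$ piece), so bounds on $\sigma$-valuations in $\bar\mot$ translate into bounds on $\tau$-degrees in $\mot$. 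Choosing lifts $m_1,\ldots,m_N\in\mot$ of a $K$-basis of $\mot/\phi_a\mot$ and augmenting them with the $\tau^k\kappa_i$ needed to cover $\tau$-degrees up to a bound set by $n_0$ and the $\tau$-degrees of the $m_j$, an induction on $\tau$-degree based on the lattice inequality then shows that every $\tau^N\kappa_i$ lies in the $K[a]$-span of this finite set. Since $K[t]$ is a finite free $K[a]$-module, $\mot$ is finitely generated over $K[t]$; the argument for $\dumot$ is entirely parallel, with the roles of rows and columns interchanged. The main obstacle, in my view, is precisely this descent: the lattice inequality lives in the non-commutative completion $K\sls{\sigma}$ with coefficients that are infinite series in $\sigma$, whereas the desired generation is in terms of polynomial $\tau$-expansions, and terminating the induction at a uniform $\tau$-degree requires simultaneously exploiting the finite-dimensionality of $\mot/\phi_a\mot$ and the strict positivity $v(\phi_a^{-n})\ge n(s-\nu_d)>0$ that drives the $\sigma$-valuations of the error terms upward.
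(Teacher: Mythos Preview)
Your approach is valid and the descent you flag as the main obstacle can be closed. Once you know $v(\phi_a^{-1})\ge s-\nu_d>0$ (which already follows from the Smith normal form of $\sigma^s\phi_a$, so the detour through Theorem~\ref{thm:equvialent-matrix-conditions} and Corollary~\ref{cor:positive-slopes} is not strictly needed), the induction runs as follows: with $m_1,\dots,m_N\in\mot$ lifting a $K$-basis of $\mot/\phi_a\mot$ and $L=\max_j\deg_\tau m_j$, let $W$ be the $K[a]$-span of the $m_j$ and of all $\tau^k\kappa_i$ with $k\le L$. For $R>L$, write $\tau^R\kappa_i=\sum_j c_jm_j+\phi_a\cdot m'$ with $c_j\in K$, $m'\in\mot$; then $m'=\phi_a^{-1}(\tau^R\kappa_i-\sum c_jm_j)\in\mot\cap\sigma^{-(R-(s-\nu_d))}\bar\Lambda$, so $\deg_\tau m'<R$ and $m'\in W$ by induction, whence $\tau^R\kappa_i\in W$.

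However, this is a genuinely different route from the paper's. The paper never passes to the completion $K\sls{\sigma}$ or invokes the lattice machinery; it diagonalizes $\sigma^s D$ over the skew \emph{polynomial} ring $K\{\sigma\}$ (via Theorem~\ref{thm:modules-over-pids}\eqref{nj:thm16}) and, from the fact that all diagonal entries have $\sigma$-order $<s$, explicitly constructs $D'\in\Mat_{d\times d}(K\{\tau\})$ such that $D'\cdot(D-t\one_d)$ (respectively $(D-t\one_d)\cdot D'$ for $\dumot$) has invertible leading $\tau$-coefficient in $\GL_d(K)$, reducing to the strictly pure case. Your argument recycles the module-theoretic apparatus of Sections~\ref{sec:modules}--\ref{sec:matrices}; the paper's is self-contained and constructive, and keeps Proposition~\ref{prop:sufficient-criterion} logically independent of the Newton-polygon and lattice theory.
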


\begin{proof}
We start by proving \eqref{prop:sufficient-criterion:item:1}, and
we first consider the case that the hypothesis is fulfilled for $a=t$.

With respect to the $K\{\tau\}$-basis $\kappa_1,\ldots,\kappa_d$ of $\mot$, we have
\[ t\cdot \svect{\kappa}{d} = D\cdot \svect{\kappa}{d}. \]
Hence, the matrix $D-t\cdot \one_d\in \Mat_{d\times d}(K[t]\{\tau\})$ annihilates the $K\{\tau\}$-basis $\kappa_1,\ldots,\kappa_d$.

Write $D=D_0+D_1\tau+\ldots + D_s\tau^s$ with $D_i\in \Mat_{d\times d}(K)$ ($i=0,\ldots, s$), and $D_s\neq 0$ (as $s=\deg_{\tau}(\phi_t)$).
If the matrix $D_s$ is invertible, %we are done. Indeed, this implies that 
we can write
\[ \tau^s \svect{\kappa}{d}=-D_s^{-1}\cdot \left( D-t\cdot \one_d-D_s\tau^s\right)\cdot \svect{\kappa}{d}, \]
and the $\tau$-degree of $D-t\cdot \one_d-D_s\tau^s$ is at most $s-1$.
Therefore all $\tau^s\kappa_j$ are $K[t]$-linear combinations of the $\tau^i\kappa_j$ with $i<s$ and $j=1,\ldots, d$, and by twisting the equation by powers of $\tau$, we obtain, that all $\tau^k\kappa_j$
with $k\geq s$ are in the $K[t]$-span of the $\tau^i\kappa_j$ with $i<s$ and $j=1,\ldots, d$. In particular, $\mot$ is finitely generated as $K[t]$-module.\footnote{Actually, unless the $t$-module $E$ wasn't a product of $\GG_a$'s with trivial $t$-action, the considered case with $D_s\in \GL_d(K)$ is called a \emph{strictly pure} $t$-module in \cite{cn-mp:hpqpam}, and it is well known to be abelian.}

If $D_s$ is not invertible, we are going to find a matrix $D'\in \Mat_{d\times d}(K\{\tau\})$ such that 
$D'\cdot (D-t\one_d)$ has the property that its top $\tau$-coefficient matrix is an invertible matrix with
coefficients in $K$.
Then we can conclude the finite generation as in the special case.

Consider the matrix $\sigma^s D\in \Mat_{d\times d}(K\{\sigma\})$. As $K\{\sigma\}$ is a skew PID, there are matrices $B,C\in \GL_{d}(K\{\sigma\})$ such that $B(\sigma^s D)C$ is a diagonal matrix (cf.~Thm.~\ref{thm:modules-over-pids}\eqref{nj:thm16}). As by assumption on $D$, the rank of $\sigma^s D$ modulo $\sigma^s$ is $d$, all the diagonal entries of this matrix have $\sigma$-orders less then $s$. Let $\nu_1,\ldots, \nu_d\in \{0,\ldots, s-1\}$ be these orders.

 Multiplying from the left with the diagonal matrix 
\[ T =  \begin{pmatrix} 
\tau^{\nu_1} & 0 & \cdots & 0\\ 
0 & \tau^{\nu_2} & \ddots & \vdots\\
\vdots & \ddots & \ddots & 0 \\
0 & \cdots & 0 & \tau^{\nu_d} \end{pmatrix}, \] 
we get $TB(\sigma^s D)C\in \Mat_{d\times d}(K\{\sigma\})$, and
\[ TB(\sigma^s D)C \equiv S \mod \sigma\] with $S\in \GL_d(K)$ (of course a diagonal matrix). Hence,
\[ CTB(\sigma^s D) \equiv CSC^{-1}\equiv S' \mod \sigma\] with $S'\in \GL_d(K)$.
As $\nu_i<s$ for all $i=1,\ldots, d$, one further has
\[ CTB(\sigma^s\cdot t\one_d) \in \sigma\Mat_{d\times d}(K[t]\{\sigma\}).\]

Finally, there is an integer $r\in \NN$ such that $D':=\tau^{r}CTB\sigma^s\in \Mat_{d\times d}(K\{\tau\})$, i.e.~that no negative $\tau$-powers remain.
By construction, this $D'$ satisfies the desired property, as the top $\tau$-coefficient matrix of
$D'\cdot (D-t\one_d)$
is the $r$-th twist $S'^{(r)}$ of $S'\in \GL_d(K)$.

In the general case, i.e.~that the hypothesis holds for some $a\in \Fq[t]\smallsetminus \Fq$, we set $D$ to be the matrix representing $\phi_a$. The same proof as above shows that the $t$-motive $\mot$ is finitely generated as $\Fq[a]$-module. In particular, $\mot$ is finitely generated as $\Fq[t]$-module, hence $E$ is abelian.

\medskip

The proof for the dual $t$-motive is almost identical to the previous one, but with sides swapped. We briefly sketch the main steps, pointing out similarities and differences. The general case when the hypothesis is fulfilled for some $a$ is obtained from the special case $a=t$ in the same way. So we restrict to the case that the hypothesis on the rank holds for the matrix $\sigma^s D$ representing $\sigma^s \cdot \phi_t$. 
As explained before, the $t$-action on $\dumot$ is given by
\[ \zvect{\dk}{d}\cdot t = \zvect{\dk}{d}\cdot D,\]
so $D-t\one_d\in \Mat_{d\times d}(K[t]\{\tau\})$ is annihilating the basis $(\dk_1,\ldots, \dk_d)$ (but this time by multiplication from the right).

If the top coefficient $D_s$ of $D$ is invertible, then so is its $(-s)$-th twist $D_s^{(-s)}$, and by using
\[ D_s\tau^s=\tau^s D_s^{(-s)}, \]
we get 
\[ \zvect{\dk}{d}\tau^s=-\zvect{\dk}{d}\cdot \left( D-t\cdot \one_d-D_s\tau^s\right)\cdot \left(D_s^{(-s)}\right)^{-1}, \]
and we deduce the finite generation of $\dumot$ as $K[t]$-module in the same way as for $\mot$.

If $D_s$ is not invertible, we are going to find a matrix $D'\in \Mat_{d\times d}(K\{\tau\})$ such that 
$(D-t\one_d)\cdot D'$ has the property that its top $\tau$-coefficient matrix is an invertible matrix with
coefficients in $K$, in order to conclude finite generation as in the special case.

First at all, we have the same matrices $B,C\in \GL_{d}(K\{\sigma\})$ as in the proof for the $t$-motive such that $B(\sigma^s D)C\in \Mat_{d\times d}(K\{\sigma\})$ is a diagonal matrix, and we can choose the same matrix $T$, in order to obtain that $B(\sigma^s D)CT\in \Mat_{d\times d}(K\{\sigma\})$ and
\[ B(\sigma^s D)CT \equiv \tilde{S} \mod \sigma\] with $\tilde{S}\in \GL_d(K)$ (of course a diagonal matrix). Hence,
\[ (\sigma^sD)\cdot CTB \equiv  B^{-1}\tilde{S}B\equiv \tilde{S}' \mod \sigma\] with $\tilde{S}'\in \GL_d(K)$,
and 
\[\tau^s(\sigma^sD)\cdot CTB\sigma^s=\left((\sigma^sD)\cdot CTB\right)^{(s)}\equiv\tilde{S}'^{(s)}\mod \sigma.\]
Further, as above
\[ t\one_d\cdot CTB\sigma^s \in \sigma\Mat_{d\times d}(K[t]\{\sigma\}).\]

Finally, there is $r\in \NN$ such that $D':=CTB\sigma^s\tau^{r}\in \Mat_{d\times d}(K\{\tau\})$, i.e.~that no negative $\tau$-powers remain.
By construction, this $D'$ satisfies the desired property, as the top $\tau$-coefficient matrix of
$(D-t\one_d)\cdot D'$
is $\tilde{S}'^{(s)}\in \GL_d(K)$.
\end{proof}

\begin{exmp}
The almost strictly pure $t$-modules defined in \cite[\S 4.5]{cn-mp:hpqpam} are examples of $t$-modules satisfying the hypothesis. Indeed, by definition these are the ones where for some $n\in\NN$, the leading coefficient matrix of the matrix representing $\tilde{\phi}_{t^n}$ is invertible.
\end{exmp}

Next, we provide an example of a simple $t$-module of dimension $2$ which is abelian, but (at least in characteristic different from $2$) is not almost strictly pure.
This shows that there are more abelian $t$-modules than almost strictly pure ones and extensions of those.

\begin{exmp}\label{exmp:new-t-module}
Consider the $t$-module $(E,\phi)$ over the rational function field $K=\Fq(\theta)$ with
\[ \phi_t=\begin{pmatrix} \theta & 0 \\ 1 & \theta \end{pmatrix}+
\begin{pmatrix} 0 & 0 \\ 1 &0\end{pmatrix}\cdot \tau+
\begin{pmatrix} 1 & 0 \\ 0 &1\end{pmatrix}\cdot \tau^2+
\begin{pmatrix} 0 & 1 \\ 0 &0\end{pmatrix}\cdot \tau^3 =
\begin{pmatrix} \theta+\tau^2 & \tau^3 \\ 1+\tau & \theta+\tau^2 \end{pmatrix}
. \]
The matrix 
\[ \sigma^3\cdot \phi_t=\begin{pmatrix} \sigma+\theta^{1/q^3}\sigma^{3} & 1 \\ \sigma^{2} +\sigma^{3}  &  \sigma+\theta^{1/q^3}\sigma^{3}\end{pmatrix} \]
does not have full rank modulo $\sigma^3$, as modulo $\sigma^{3}$ the second row is the $\sigma$-multiple of the first one.

However, this $t$-module satisfies the hypothesis of the previous theorem with $a=t^2$:\\
\[ \phi_{t^2}=(\phi_t)^2=\begin{pmatrix} \theta+(\theta^{q^2}+\theta)\tau^2+\tau^3+2\tau^4 & 
 (\theta^{q^3}+\theta)\cdot \tau^3+2\tau^5 \\ 
 2\theta+(\theta^q+\theta)\tau+2\tau^2+2\tau^3 & 
 \theta^2+(\theta^{q^2}+\theta)\tau^2+\tau^3+2\tau^4 \end{pmatrix}, \]
and 
\[ \sigma^{5}\phi_{t^2}=\begin{pmatrix} 
2\sigma+\sigma^{2}+(\theta^{1/q^3}+\theta^{1/q^5})\sigma^{3}+\theta^{1/q^5}\sigma^{5} & 
2 +  (\theta^{1/q^2}+\theta^{1/q^5})\cdot \sigma^{2} \\ 
2\sigma^{2}+2\sigma^{3}+(\theta^{1/q^4}+\theta^{1/q^5})\sigma^{4}+2\theta^{1/q^5}\sigma^{5} & 
 2\sigma+\sigma^{2}+(\theta^{1/q^3}+\theta^{1/q^5})\sigma^{3}+\theta^{1/q^5}\sigma^{5} \end{pmatrix}. \]

Therefore, this $t$-module $E$ is abelian and $t$-finite.

We will see later (see Example \ref{exmp:new-t-module-revisited}) that unless the characteristic of $\Fq$ is $2$, the $t$-module $E$ is not pure, and in particular not almost strictly pure (cf.~\cite[\S 4.5]{cn-mp:hpqpam}).%
\footnote{Be aware that the condition of the leading coefficient matrix being invertible depends on the chosen coordinate system. Hence, we can not see this directly from the given matrix for $\phi_t$.}

\medskip

We conclude this example by showing that $E$ is indeed a simple $t$-module:\\
A non-trivial submodule of $E$ would be generated by an eigenvector $\left(\begin{smallmatrix} x\\ y\end{smallmatrix}\right)\in K\sp{\tau}^2$ of the matrix 
\[ \phi_t=\begin{pmatrix} \theta+\tau^2 & \tau^3 \\ 1+\tau & \theta+\tau^2 \end{pmatrix}.\]
Hence, there would be some $c\in K\sp{\tau}$ such that the matrix
\[ \phi_t-c\one_2= \begin{pmatrix} \theta+\tau^2-c & \tau^3 \\ 1+\tau & \theta+\tau^2-c \end{pmatrix}\]
does not have full rank.
By dividing (from the right) by the lowest $\tau$-power occurring in $x$ and $y$, we can assume that
the vector $\transp{(x,y)}$ is not congruent to $\transp{(0,0)}$ modulo $\tau$.

As the matrix $\phi_t-c\one_2$ is lower diagonal modulo $\tau^3$, we must have $c=\theta+\tau^2-\tau^3\cdot c_3$ for some $c_3\in K\sp{\tau}$ which results in the equation
\[ \begin{pmatrix} \tau^3c_3 & \tau^3 \\ 1+\tau & \tau^3c_3 \end{pmatrix}\cdot \begin{pmatrix}  x\\ y\end{pmatrix} = \begin{pmatrix}  0\\ 0\end{pmatrix}.\]
Therefore (considering the equation modulo $\tau^3$ again), $x$ is divisible by $\tau^3$, i.e.~$x=\tau^3x_3$ for some $x_3\in K\sp{\tau}$. Hence,
\begin{eqnarray*} 
\begin{pmatrix}  0\\ 0\end{pmatrix} &=& \begin{pmatrix} \tau^3c_3 & \tau^3 \\ 1+\tau & \tau^3c_3 \end{pmatrix}\cdot \begin{pmatrix}  \tau^3 x_3\\ y\end{pmatrix} = \begin{pmatrix} \tau^3c_3\tau^3 & \tau^3 \\ (1+\tau)\tau^3 & \tau^3c_3 \end{pmatrix}\cdot \begin{pmatrix}  x_3\\ y\end{pmatrix}\\
&=& \tau^3\cdot \begin{pmatrix} c_3\tau^3 & 1 \\ (1+\tau) & c_3 \end{pmatrix}\cdot \begin{pmatrix}  x_3\\ y\end{pmatrix}.
\end{eqnarray*}
As the matrix $\begin{pmatrix} c_3\tau^3 & 1 \\ (1+\tau) & c_3 \end{pmatrix}$ is invertible modulo $\tau^3$, 
we conclude $x_3\equiv y\equiv 0$ modulo $\tau^3$, contradicting the condition that the eiqenvector $\transp{(x,y)}$ is not congruent to $\transp{(0,0)}$ modulo~$\tau$. 

\end{exmp}

\begin{prop}\label{prop:necessary-criterion-mot}
Let $E$ be a $t$-module given as above, and assume that $E$ is abelian. Then there exists a natural number $n$ such that the block matrix
\[ \begin{pmatrix}
\sigma^s D \\
\sigma^s D^2 \\
\vdots \\
\sigma^s D^n
\end{pmatrix}  \in \Mat_{nd\times d}(K\{\sigma\})
\]
 has rank $d$ modulo $\sigma^s$, where $s$ is the maximum $\tau$-degree of all the entries of the matrices $D,\ldots, D^n$.
\end{prop}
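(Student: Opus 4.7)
My plan is to reduce to condition (3) of Theorem \ref{thm:equvialent-matrix-conditions}—that every slope of the Newton polygon of $\lambda_d$ is positive—since the equivalence $(3)\Leftrightarrow(2)$ proven there immediately yields the desired block-matrix rank condition. I will prove (3) by contrapositive: assuming some slope of $N_{\lambda_d}$ is non-positive, I will show that $\mot$ cannot be finitely generated over $K[t]$. To set up, view $V=K\sls{\sigma}^d$ as a left $K\sls{\sigma}[t]$-module where $t$ acts by right multiplication with $D$ on coefficient row vectors, and identify $\mot\cong K\{\tau\}^d$ with the $K\{\tau\}[t]$-submodule of $V$ whose $l$-th standard row vector $e_l$ corresponds to $\kappa_l$ via $\tau\mapsto\sigma^{-1}$; since $t$ is central, scalar multiplication by $\sigma$ commutes with the $t$-action on $V$.

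Next, decompose $V\cong\bigoplus_j V_j$ via Theorem \ref{thm:decomposition-in-one-slopes}, each $V_j\cong K\sls{\sigma}[t]/f_jK\sls{\sigma}[t]$ with one-edge Newton polygon; since the slopes collected by $\lambda_d$ are exactly those of the $f_j$, there is an index $j_0$ with $\mathrm{slope}(N_{f_{j_0}})\leq 0$. Let $\pi\colon V\twoheadrightarrow V_{j_0}$ be the projection and $\Lambda:=\pi(K\sps{\sigma}^d)$, a $K\sps{\sigma}$-lattice in $V_{j_0}$. By Theorem \ref{thm:one-slope-modules}\eqref{thm:item:1}, I choose a basis $(b_1,\ldots,b_{d_{j_0}})$ of $\Lambda$ satisfying
\[ \pr_{d_{j_0}}(t^km)\in K\sps{\sigma}\,b_{d_{j_0}}\quad\text{for all } k\geq 0,\ m\in\Lambda. \]
Assuming for contradiction that $\mot$ is abelian, $\pi(\mot)$ is also finitely generated over $K[t]$, so $\pi(\mot)=K[t]\cdot W$ for a finite-dimensional $K$-subspace $W\subseteq V_{j_0}$; choose $N\in\NN$ with $W\subseteq\sigma^{-N}\Lambda$. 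Because $\sigma$ and $t$ commute on $V_{j_0}$, this gives $\pi(\mot)\subseteq \sigma^{-N}\sum_{k\geq 0}t^k\Lambda$, and applying the $K\sls{\sigma}$-linear map $\pr_{d_{j_0}}$ together with the rigidity above yields $\pr_{d_{j_0}}(\pi(\mot))\subseteq\sigma^{-N}K\sps{\sigma}\,b_{d_{j_0}}$.

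To finish, note that since $\pi$ is surjective and $\{\kappa_l\}_l$ is a $K\sls{\sigma}$-basis of $V$, the $b_{d_{j_0}}$-coefficients $c_l\in K\sls{\sigma}$ of $\pi(\kappa_l)$ cannot all vanish; pick $l$ with $c_l\neq 0$. Then for every $N'\geq 0$ the element $\pi(\tau^{N'}\kappa_l)=\sigma^{-N'}\pi(\kappa_l)\in \pi(\mot)$ has $b_{d_{j_0}}$-coefficient $\sigma^{-N'}c_l$ of $\sigma$-valuation $v(c_l)-N'$, and taking $N'>N+v(c_l)$ contradicts the containment above. The hardest step is finding the correct bridge between the $K[t]$-finite generation of $\mot$—a statement purely about a $K\{\tau\}$-module—and a quantitative valuation bound inside a suitably chosen one-slope $K\sls{\sigma}[t]$-quotient of $V$; the commutation of $\sigma$ with $t$ (allowing a single $\sigma^{-N}$ to absorb the finite spread of generators of $\pi(\mot)$) together with Theorem \ref{thm:one-slope-modules}\eqref{thm:item:1} (giving rigid control on the last coordinate under iterated $t$-multiplication) are precisely the tools that make this bridge work.
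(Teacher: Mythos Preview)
Your argument is correct, but it takes a genuinely different route from the paper's proof. The paper proceeds directly and elementarily: since $\mot$ is finitely generated over $K[t]$, there is some $r$ and a matrix $B=\sum_k B_k t^k\in\Mat_{d\times d}(K[t]\{\tau\})$ of $\tau$-degree $<r$ with $\tau^r\one_d=\sum_k B_k D^k$; left-multiplying by $\sigma^{r+s-1}$ and reading off ranks modulo $\sigma^s$ immediately gives the block-column condition. No Newton polygons, no slope decomposition, no appeal to Theorem~\ref{thm:equvialent-matrix-conditions} are used at this step.

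You instead establish condition~(3) of Theorem~\ref{thm:equvialent-matrix-conditions} directly, by showing that a non-positive slope in $N_{\lambda_d}$ obstructs finite generation of $\mot$ over $K[t]$: project onto a one-slope summand $V_{j_0}$ of non-positive slope, invoke Theorem~\ref{thm:one-slope-modules}\eqref{thm:item:1} to confine the last coordinate of any $K[t]$-finitely generated image inside a fixed $\sigma^{-N}\cO\,b_{d_{j_0}}$, and observe that the unbounded $\tau$-powers $\tau^{N'}\kappa_l$ in $\mot$ escape this bound. This is more conceptual---it explains structurally \emph{why} abelianness forces positive slopes---but it leans on the heavier machinery of Sections~\ref{sec:newton-polygons}--\ref{sec:matrices} and in particular on the implication $(3)\Rightarrow(1)\Rightarrow(2)$ of Theorem~\ref{thm:equvialent-matrix-conditions} for the final step. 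The paper's route is self-contained at this point, needing only the definition of ``abelian'' and a short matrix identity; in exchange, it postpones the structural insight to the assembly of Theorem~\ref{thm:main-theorem}.
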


\begin{proof}
As a $K$-module, $\mot$ is generated by the set $\{ \tau^j\kappa_i \mid i=1,\ldots, d, j\geq 0\}$. As it is 
finitely generated as $K[t]$-module, there exist numbers $r_1,\ldots, r_d$ such that
$\{ \tau^j\kappa_i \mid i=1,\ldots, d, 0\leq j\leq r_i-1\}$ generates $\mot$ as $K[t]$-module, and hence also $\{ \tau^j\kappa_i \mid i=1,\ldots, d, 0\leq j\leq r-1\}$ for $r=\max \{r_i \mid 1\leq i\leq d\}$ generates $\mot$.
Therefore, there exists a matrix $B\in \Mat_{d\times d}(K[t]\{\tau\})$ of $\tau$-degree less than $r$ such that
\[ \svect{\tau^r\kappa}{d} = B \svect{\kappa}{d} . \]
Writing $B=\sum_{k=0}^n B_kt^k$ for $B_k\in \Mat_{d\times d}(K\{\tau\})$ ($k=0,\ldots, n$), and applying Equation \eqref{eq:t-action-on-mot}, we obtain
\[ \tau^r\one_d\cdot \svect{\kappa}{d} = (\sum_{k=0}^n B_k D^k)\cdot \svect{\kappa}{d}. \]
As $(\kappa_1,\ldots,\kappa_d)$ is a basis of the free $K\{\tau\}$-module, this implies that
$\tau^r\one_d= \sum_{k=0}^n B_k D^k$.
Multiplying by $\sigma^{r+s-1}$ from the left, we get 
\begin{eqnarray*}
 \sigma^{s-1}\one_d &=& \sum_{k=0}^n \sigma^{r-1}B_k^{(-s)}\cdot \sigma^s D^k \\
&=& \sigma^{r-1}B_0^{(-s)}\cdot \sigma^s\one_d +
\left(\sigma^{r-1}B_1^{(-s)},\, \sigma^{r-1}B_2^{(-s)},\,\ldots,\, \sigma^{r-1}B_n^{(-s)}\right)\cdot \begin{pmatrix}
\sigma^s D \\
\sigma^s D^2 \\
\vdots \\
\sigma^s D^n
\end{pmatrix}.
\end{eqnarray*}
By definition of $r$ and $s$, all the matrices $\sigma^{r-1}B_k^{(-s)}$ and $\sigma^sD^k$ lie in
$\Mat_{d\times d}(K\{\sigma\})$.
Since $\sigma^{s-1}\one_d$ has rank $d$ modulo $\sigma^s$, and $\sigma^s\one_d\equiv 0$ modulo $\sigma^s$, we obtain that
also the block matrix $\transp{\begin{pmatrix}
\sigma^s D, &
\sigma^s D^2,&
\ldots \, , &
\sigma^s D^n\end{pmatrix}}$ has rank $d$ modulo $\sigma^s$.
\end{proof}

Similarly, we get a criterion if $E$ is $t$-finite.

\begin{prop}\label{prop:necessary-criterion-dumot}
Let $E$ be given as above, and assume that $E$ is $t$-finite. Then there exists a natural number $n$ such that the block matrix
\[ \begin{pmatrix}
\sigma^s D, &
\sigma^s D^2, &
\cdots , &
\sigma^s D^n
\end{pmatrix}  \in \Mat_{d\times nd}(K\{\sigma\})
\]
has rank $d$ modulo $\sigma^s$, where $s$ is the maximum $\tau$-degree of all the entries of the matrices $D,\ldots, D^n$.
\end{prop}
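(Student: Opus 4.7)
The argument mirrors that of Proposition~\ref{prop:necessary-criterion-mot}, with the sides of multiplication interchanged to accommodate the right-$K\{\tau\}[t]$-module structure on $\dumot$. My plan is first to convert the hypothesis into an explicit matrix identity in $\Mat_{d\times d}(K\sls{\sigma})$, and then to extract the rank statement by multiplying on two sides by suitable powers of $\sigma$.

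Since $\dumot$ is finitely generated as a $K[t]$-module while $\{\dk_i\tau^j \mid 1\le i\le d,\ j\ge 0\}$ is a $K$-basis of $\dumot$, only finitely many $\tau$-powers occur in any finite $K[t]$-generating set; hence there exists $r \in \NN$ such that $\{\dk_i\tau^j \mid 1\le i\le d,\ 0\le j<r\}$ already generates $\dumot$ over $K[t]$. Expressing each $\dk_i\tau^r$ as a $K[t]$-linear combination of these generators and collecting coefficients yields a matrix $B \in \Mat_{d\times d}(K[t]\{\tau\})$ of $\tau$-degree less than $r$ with $\zvect{\dk\tau^r}{d} = \zvect{\dk}{d}\cdot B$. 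Writing $B = \sum_{k=0}^n B_k t^k$ with $B_k \in \Mat_{d\times d}(K\{\tau\})$, and iterating~\eqref{eq:t-action-on-dumot} together with the centrality of $t$, the right-hand side rewrites as $\zvect{\dk}{d}\cdot \sum_k D^k B_k$. Since $(\dk_1,\ldots,\dk_d)$ is a free right $K\{\tau\}$-basis, freeness produces the identity
\[ \tau^r \one_d \;=\; \sum_{k=0}^{n} D^k B_k \qquad\text{in } \Mat_{d\times d}(K\sls{\sigma}). \]

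Next, I would multiply this identity on the left by $\sigma^s$ and on the right by $\sigma^{r-1}$. Since $\sigma^s\tau^r\sigma^{r-1} = \sigma^{s-1}$, the left-hand side becomes $\sigma^{s-1}\one_d$, while the right-hand side becomes $\sum_{k=0}^n (\sigma^s D^k)(B_k\sigma^{r-1})$. By the choice of $s$, each factor $\sigma^s D^k$ lies in $\Mat_{d\times d}(K\sps{\sigma})$; and because $\deg_\tau B_k < r$, so does each $B_k\sigma^{r-1}$. The $k=0$ summand $\sigma^s B_0\sigma^{r-1}$ has $\sigma$-valuation at least $s$ and therefore vanishes modulo $\sigma^s$, leaving
\[ \sigma^{s-1}\one_d \;\equiv\; \bigl(\sigma^s D,\ \sigma^s D^2,\ \ldots,\ \sigma^s D^n\bigr) \cdot \begin{pmatrix} B_1\sigma^{r-1}\\ \vdots \\ B_n\sigma^{r-1} \end{pmatrix} \pmod{\sigma^s}. \]
Since $\sigma^{s-1}\one_d$ has rank $d$ modulo $\sigma^s$ in the sense of Definition~\ref{def:rank-modulo-sigma-s}, the same Smith-normal-form reasoning as at the end of the proof of Proposition~\ref{prop:necessary-criterion-mot} (using Corollary~\ref{cor:diagonalising-over-valuation-ring}) forces the $d\times nd$ block matrix on the right-hand factor to have rank at least $d$ modulo $\sigma^s$, and hence exactly $d$.

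The key point, distinguishing this from the $\mot$-case, is positioning $\sigma^s$ directly adjacent to $D^k$ on the correct side. Because the derived identity places $D^k$ to the left of $B_k$ (opposite to what one has for $\mot$), a naive mirror of the earlier argument — one-sided multiplication on the right by $\sigma^{r+s-1}$ — would force $s$-twists when pulling $\sigma^s$ across $B_k$ and then across $D^k$. The two-sided multiplication above avoids these twists altogether, since $\sigma^s$ lands directly next to $D^k$ from the outset while $\sigma^{r-1}$ absorbs the $\tau$-part of $B_k$ into $\Mat_{d\times d}(K\sps{\sigma})$.
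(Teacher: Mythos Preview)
Your proof is correct and follows the same approach as the paper: both multiply the identity $\tau^r\one_d=\sum_k D^kB_k$ by $\sigma^s$ on the left and $\sigma^{r-1}$ on the right, then separate off the $k=0$ term and conclude the rank statement. The only cosmetic difference is that the paper rewrites your factor $B_k\sigma^{r-1}$ as the equal expression $\sigma^{r-1}B_k^{(r-1)}$, so the twists you say are avoided do appear in the paper's write-up---but purely as notation, not as an extra step.
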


\begin{proof}
The proof for the dual $t$-motive is almost identical to the previous one, but with sides swapped. We briefly sketch the main steps, pointing out similarities and differences.

As a $K$-module, $\dumot$ is generated by the set $\{ \dk_i\tau^j \mid i=1,\ldots, d, j\geq 0\}$. As it is 
finitely generated as $K[t]$-module, there exist numbers $r_1,\ldots, r_d$ such that
$\{ \dk_i\tau^j \mid i=1,\ldots, d, 0\leq j\leq r_i-1\}$ generate $\dumot$ as $K[t]$-module, and hence also $\{ \dk_i\tau^j \mid i=1,\ldots, d, 0\leq j\leq r-1\}$ for $r=\max \{r_i \mid 1\leq i\leq d\}$.
Therefore, there exists a matrix $B\in \Mat_{d\times d}(K[t]\{\tau\})$ of $\tau$-degree less than $r$ such that
\[ \zvect{\dk}{d}\cdot \tau^r\one_d = \zvect{\dk}{d}\cdot B . \]
Writing $B=\sum_{k=0}^n t^k B_k $ for $B_k\in \Mat_{d\times d}(K\{\tau\})$ ($k=0,\ldots, n$), and applying Equation \eqref{eq:t-action-on-dumot}, we obtain
\[ \zvect{\dk}{d}\cdot \tau^r\one_d  = \zvect{\dk}{d}\cdot (\sum_{k=0}^n D^k B_k  ). \]
As $(\dk_1,\ldots,\dk_d)$ is a basis of the free $K\{\tau\}$-module, this implies that
$\tau^r\one_d= \sum_{k=0}^n D^k B_k $.
Multiplying by $\sigma^s$ from the left and $\sigma^{r-1}$ from the right, we get 
\begin{eqnarray*}
 \sigma^{s-1}\one_d &=& \sum_{k=0}^n \sigma^{s}D^k\cdot  \sigma^{r-1}B_k^{(r-1)} \\
&=& \sigma^{s}\one_d\cdot \sigma^{r-1}B_0^{(r-1)}  +
\begin{pmatrix}
\sigma^s D, &
\sigma^s D^2, &
\cdots , &
\sigma^s D^n
\end{pmatrix} \cdot 
\begin{pmatrix} \sigma^{r-1}B_1^{(r-1)}\\ \sigma^{r-1}B_2^{(r-1)}\\ \vdots\\ \sigma^{r-1}B_n^{(r-1)} \end{pmatrix}.
\end{eqnarray*}
By definition of $r$ and $s$, all the matrices $\sigma^{r-1}B_k^{(r-1)}$ and $\sigma^{s}D^k$ lie in
$\Mat_{d\times d}(K\{\sigma\})$.
Since $\sigma^{s-1}\one_d$ has rank $d$ modulo $\sigma^s$, and $\sigma^{s}\one_d\equiv 0$ modulo $\sigma^s$, we obtain that
also the block matrix $\begin{pmatrix}
\sigma^{s} D, &
\sigma^{s} D^2,&
\ldots \, , &
\sigma^{s} D^n\end{pmatrix}$ has rank $d$ modulo $\sigma^s$.
\end{proof}

We finally obtain our main theorem.

\begin{thm}\label{thm:main-theorem}
For the $t$-module $(E,\phi)$ with $\phi_t$ being represented by the matrix $D$, the following are equivalent
\begin{enumerate}
\item $E$ is abelian,
\item $E$ is $t$-finite,
\item the Newton polygon of the last invariant factor $\lambda_d$ of the matrix $D$ has positive slopes only. 
\end{enumerate}
\end{thm}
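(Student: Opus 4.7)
The plan is to assemble the theorem from the preceding building blocks: Propositions~\ref{prop:sufficient-criterion}, \ref{prop:necessary-criterion-mot}, \ref{prop:necessary-criterion-dumot}, and the matrix equivalence Theorem~\ref{thm:equvialent-matrix-conditions}. All the hard work has already been done in these results, and the theorem is essentially a question of identifying the right matrix conditions and invoking the correct implications.

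First I would establish (3)$\Rightarrow$(1) and (3)$\Rightarrow$(2) simultaneously. Assume that the Newton polygon of $\lambda_d$ has positive slopes only. By the implication (3)$\Rightarrow$(1) of Theorem~\ref{thm:equvialent-matrix-conditions}, there exists an $n\geq 1$ such that $\sigma^{s_n}D^n\in \Mat_{d\times d}(K\sps{\sigma})$ has full rank $d$ modulo $\sigma^{s_n}$. Since $D^n$ represents $\phi_{t^n}$ with respect to the fixed coordinate system, and $s_n$ is precisely the maximum $\tau$-degree of the entries of $\phi_t,\phi_{t^2},\ldots,\phi_{t^n}$ (which in particular bounds $\deg_\tau(\phi_{t^n})$ from above, after possibly replacing $s_n$ by a larger integer as noted in the remark following Theorem~\ref{thm:equvialent-matrix-conditions}), the hypothesis of Proposition~\ref{prop:sufficient-criterion} is satisfied with $a=t^n$. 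Applying both parts of that proposition yields that $E$ is abelian and $t$-finite.

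Next I would establish (1)$\Rightarrow$(3). Assume $E$ is abelian. Proposition~\ref{prop:necessary-criterion-mot} furnishes an $n\geq 1$ for which the column-stacked block matrix $\transp{(\sigma^s D,\ldots,\sigma^s D^n)}$ has rank $d$ modulo $\sigma^s$, where $s$ is the maximum $\tau$-degree among entries of $D,\ldots,D^n$. This is exactly condition (2) of Theorem~\ref{thm:equvialent-matrix-conditions} (the value $s$ here dominates $s_n$, which is harmless by the remark noted above). Invoking (2)$\Rightarrow$(3) of Theorem~\ref{thm:equvialent-matrix-conditions} gives that all slopes of the Newton polygon of $\lambda_d$ are positive. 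Symmetrically, assuming $E$ is $t$-finite, Proposition~\ref{prop:necessary-criterion-dumot} produces the row-stacked block matrix of rank $d$ modulo $\sigma^s$, which is condition (2') of Theorem~\ref{thm:equvialent-matrix-conditions}, and the implication (2')$\Rightarrow$(3) of that theorem concludes the argument.

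Since no step beyond this triangulation is needed, there is no real obstacle at this stage; the genuine content has already been packaged into the matrix-theoretic Theorem~\ref{thm:equvialent-matrix-conditions} and the sufficient-condition Proposition~\ref{prop:sufficient-criterion}. The only subtlety to watch for is ensuring consistency of the integer $s$ versus $s_n$ in the three different statements, but since enlarging $s$ preserves all of the rank conditions in play (an increased $s$ only scales everything by an extra power of $\sigma$ and leaves the modulo-$\sigma^s$ rank unchanged in the relevant sense), this is purely bookkeeping.
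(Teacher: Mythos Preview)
Your proposal is correct and follows exactly the paper's approach: the paper's own proof is a one-sentence assembly of Propositions~\ref{prop:sufficient-criterion}, \ref{prop:necessary-criterion-mot}, \ref{prop:necessary-criterion-dumot} and Theorem~\ref{thm:equvialent-matrix-conditions}, and you have simply spelled out the triangulation in more detail. Your care about the bookkeeping discrepancy between $s$ and $s_n$ is appropriate and correctly resolved by the remark following Theorem~\ref{thm:equvialent-matrix-conditions}.
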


\begin{proof}
Combining Prop.~\ref{prop:sufficient-criterion}, Prop.~\ref{prop:necessary-criterion-mot}, Prop.~\ref{prop:necessary-criterion-dumot}, and Theorem \ref{thm:equvialent-matrix-conditions}, 
we see that being abelian and being $t$-finite are both equivalent to the equivalent conditions given in
Theorem \ref{thm:equvialent-matrix-conditions}, in particular to the condition on the Newton polygon.
\end{proof}

\section{Purity}\label{sec:purity}

In \cite{ga:tm}, Anderson defined when an abelian $t$-module and its $t$-motive are pure, and defined the weight of such a pure $t$-motive, which we both recall here.

\begin{defn}(see \cite[pp.467\& 468]{ga:tm})\\
Let $(E,\phi)$ be an abelian $t$-module, and $\mot=\mot(E)$ its $t$-motive. The $t$-motive $\mot$ and the $t$-module $E$ are called \emph{pure}, if there exists a $K\ps{\frac{1}{t}}$-lattice $\tilde{\Lambda}$ in $K\ls{\frac{1}{t}}\otimes_{K[t]} \mot$, as well as positive integers $u,v\in \NN$ such that
\[  t^u \tilde{\Lambda} = \tau^v  \tilde{\Lambda}.\]
The weight $w(\mot)$ of the $t$-motive is defined to be
\[ w(\mot)= \frac{\dim(E)}{\rk(E)}, \]
where $\dim(E)$ is the dimension of $E$ (equal to the rank of $\mot$ as $K\{\tau\}$-module), and $\rk(E)$ is the rank of $E$ (defined as the rank of $\mot$ as $K[t]$-module).
\end{defn}

Anderson also showed  that for a pure $t$-module $E$, the ratio $\frac{u}{v}$ of the numbers above equals the weight $w(\mot)$ (cf.~\cite[Lemma 1.10.1]{ga:tm}).

The main theorem of this section is
\begin{thm}\label{thm:purity}
Let $(E,\phi)$ be an abelian $t$-module of dimension $d$, and $D\in \Mat_{d\times d}(K\{\tau\})$ the matrix representing $\phi_t$ with respect to a fixed coordinate system. Let $\mot$ be the $t$-motive of $E$.
The $t$-motive $\mot$ is pure if and only if the Newton polygon of the last invariant factor $\lambda_d$ of $D$ has exactly one edge.

In this case, the weight of $\mot$ equals the reciprocal of the slope of the edge.
\end{thm}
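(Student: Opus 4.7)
The plan is to transport the whole question to the $\cL$-side, where the lattice theory of Section~\ref{sec:modules} can be applied. The key reduction is the natural isomorphism $K\ls{1/t}\otimes_{K[t]}\mot \cong \cL\otimes_{K\{\tau\}}\mot$ (Proposition~\ref{prop:iso-of-completions}): under it the valuation rings $K\ps{1/t}$ and $\cO=K\sps{\sigma}$ correspond, so a $K\ps{1/t}$-lattice $\tilde{\Lambda}$ on the left matches an $\cO$-lattice $\Lambda$ on the right, and since $\tau=\sigma^{-1}$ the relation $t^u\tilde{\Lambda}=\tau^v\tilde{\Lambda}$ becomes $\sigma^v t^u \Lambda=\Lambda$. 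Writing $M:=\cL\otimes_{K\{\tau\}}\mot$, a finite-dimensional $\cL$-vector space on which $t$ acts via the matrix $D$, purity of $\mot$ is thus equivalent to the existence of an $\cO$-lattice $\Lambda\subseteq M$ and positive integers $u,v$ with $\sigma^v t^u\Lambda=\Lambda$.

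Next, Theorems~\ref{thm:elementary-divisor-theorem} and \ref{thm:decomposition-in-one-slopes} applied to $M$ give a decomposition $M\cong\bigoplus_{i=1}^k \cL[t]/f_i\cL[t]$ where each $f_i$ has a Newton polygon consisting of a single edge, of some slope $s_i$. Because $\lambda_i\mid \lambda_d$ for every $i$, the set of slopes appearing among the $f_i$ coincides with the set of slopes of $\lambda_d$, and by Theorem~\ref{thm:main-theorem} all of them are positive (since $E$ is abelian). Consequently, $\lambda_d$ has a single edge if and only if the $s_i$ are all equal to a common value $s>0$.

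For the necessity of the single-edge condition, given $\sigma^v t^u\Lambda=\Lambda$ with $u,v>0$, compose with the $\cL[t]$-linear projection $\pi_i\colon M\to M_i:=\cL[t]/f_i\cL[t]$. As $\pi_i$ is surjective, $\pi_i(\Lambda)$ is a finitely generated $\cO$-submodule of $M_i$ containing an $\cL$-basis of $M_i$, hence an $\cO$-lattice (cf.~Remark~\ref{rem:lattices}); and $\sigma^v t^u\pi_i(\Lambda)=\pi_i(\Lambda)$ holds. Theorem~\ref{thm:stable-O-lattice}\eqref{thm:stable-O-lattice:item:2} then forces $v/u=s_i$ for every $i$, so all $s_i$ agree and $\lambda_d$ has a single edge. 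For sufficiency, suppose all $s_i=s$; Corollary~\ref{cor:stable-O-lattices} furnishes $\cO$-lattices $\Lambda_i\subseteq M_i$ with $\sigma^{d_i s}t^{d_i}\Lambda_i=\Lambda_i$ where $d_i=\deg_t f_i$. Take $u$ to be any positive common multiple of the $d_i$ and set $v:=us\in\NN$. Since $t$ is central in $\cL[t]$ (and so commutes with $\sigma$), iterating yields $\sigma^v t^u\Lambda_i=(\sigma^{d_is}t^{d_i})^{u/d_i}\Lambda_i=\Lambda_i$ for each $i$, and the direct sum $\Lambda:=\bigoplus_i \Lambda_i$ satisfies $\sigma^v t^u\Lambda=\Lambda$.

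Translating this lattice back via Proposition~\ref{prop:iso-of-completions}, one obtains $t^u\tilde{\Lambda}=\tau^v\tilde{\Lambda}$ with $u/v=1/s$; Anderson's identification $w(\mot)=u/v$ (\cite[Lemma~1.10.1]{ga:tm}) then gives $w(\mot)=1/s$. The main obstacle is in the opening reduction: one must verify carefully that Proposition~\ref{prop:iso-of-completions} really identifies $K\ps{1/t}$-lattices with $\cO$-lattices and intertwines the $t$- and $\tau$-actions as claimed. Once this reduction is in place, the rest of the argument is an essentially formal combination of the decomposition theorems and the precise lattice structure on one-slope factors established in Section~\ref{sec:modules}.
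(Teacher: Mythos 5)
Your overall route is the same as the paper's: identify $K\ls{\frac{1}{t}}\otimes_{K[t]}\mot$ with $\mothat=K\sls{\sigma}\otimes_{K\{\tau\}}\mot$, decompose into one-slope factors via Theorems~\ref{thm:elementary-divisor-theorem} and \ref{thm:decomposition-in-one-slopes}, and then use Theorem~\ref{thm:stable-O-lattice} and Corollary~\ref{cor:stable-O-lattices} to read off the slopes from the lattice stability condition. Your use of projections $\pi_i$ in the converse direction is a slight variation on the paper's use of intersections, and it works.

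However, there is a genuine gap in your ``opening reduction.'' You claim that under the isomorphism of Proposition~\ref{prop:iso-of-completions} ``the valuation rings $K\ps{\frac{1}{t}}$ and $\cO=K\sps{\sigma}$ correspond, so a $K\ps{\frac{1}{t}}$-lattice $\tilde{\Lambda}$ matches an $\cO$-lattice $\Lambda$.'' This is not what the proposition says, and it is not true: these are two quite different rings acting on the same ambient space, and a lattice for one need not be a lattice for the other. Parts \eqref{prop:iso-of-completions:item:2} and \eqref{prop:iso-of-completions:item:3} of the proposition are strictly conditional: a $K\sps{\sigma}$-lattice is a $K\ps{\frac{1}{t}}$-lattice \emph{provided} it satisfies $t^{-1}\Lambda\subseteq\Lambda$, and a $K\ps{\frac{1}{t}}$-lattice is a $K\sps{\sigma}$-lattice \emph{provided} $\sigma\Lambda\subseteq\Lambda$. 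Meeting these hypotheses is precisely the part of the argument you defer to ``verify carefully,'' and it does not come for free. In the sufficiency direction, you must invoke the additional clause in Corollary~\ref{cor:stable-O-lattices} that the $\Lambda_i$ can be taken with $t^{-1}\Lambda_i\subseteq\Lambda_i$ (available since the slopes are positive), so that $\Lambda=\bigoplus\Lambda_i$ satisfies the hypothesis of item~\eqref{prop:iso-of-completions:item:2}. In the necessity direction, a pure structure gives you a $K\ps{\frac{1}{t}}$-lattice which need not be $\sigma$-stable; you must first replace it by $\sum_{i=0}^{v-1}\sigma^{i}\Lambda$ (which is again a $K\ps{\frac{1}{t}}$-lattice with the same periodicity, and is $\sigma$-stable since $\sigma^{v}\Lambda=t^{-u}\Lambda\subseteq\Lambda$) before item~\eqref{prop:iso-of-completions:item:3} applies. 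Without these two insertions, your argument never legitimately moves a lattice across the bridge between the two valuation rings, and so the ``essentially formal combination'' at the end does not go through as written.
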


\begin{rem}
\begin{enumerate}
\item We don't use the weight of $E$ here, as it differs in literature. Anderson defined it to be equal to $w(\mot)$ in \cite{ga:tm}, whereas in \cite{uh-akj:pthshcff}, the notion of $w(E)$ differs from that one by the sign. The negative sign in \cite{uh-akj:pthshcff} is used in order to have compatibility with the weight of the dual $t$-motive, and the isomorphism between the category theoretical dual of the $t$-motive and the dual $t$-motive (see \cite[Theorem 5.13]{uh-akj:pthshcff}).
\item We should remark that by Theorem \ref{thm:main-theorem} and Corollary \ref{cor:main-theorem-A-modules}, some aspects in \cite{uh-akj:pthshcff} simplify as abelian $t$-modules are also $t$-finite and vice versa. In particular, the canonical homomorphism $\Xi$ given in Theorem 5.13 ibid.~is an isomorphism under the given hypothesis that $E$ is abelian.
\item By \cite[Theorem 5.29]{uh-akj:pthshcff}, for a $t$-module $E$ which is both abelian and $t$-finite (so by Theorem \ref{thm:main-theorem}, abelian or $t$-finite), its $t$-motive is pure if and only if its dual $t$-motive is pure, and their weights just differ by the sign.
\end{enumerate}
\end{rem}

From Theorem \ref{thm:purity} and the last item of the previous remark, we directly obtain
\begin{cor}\label{cor:purity-dual}
Let $(E,\phi)$ be a $t$-finite $t$-module of dimension $d$, and $D\in \Mat_{d\times d}(K\{\tau\})$ the matrix representing $\phi_t$ with respect to a fixed coordinate system. Let $\dumot$ be the dual $t$-motive of $E$.
The dual $t$-motive $\dumot$ is pure if and only if the Newton polygon $N_{\lambda_d}$ of the last invariant factor $\lambda_d$ of $D$ has exactly one edge.

In this case, for the weight of $\dumot$, we have
\[ w(\dumot)=-\frac{1}{s}, \] 
where $s$ is the slope of the edge of $N_{\lambda_d}$.
\end{cor}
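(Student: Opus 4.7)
The plan is to deduce this corollary directly by combining the analogous statement for the $t$-motive (Theorem \ref{thm:purity}) with the already-cited compatibility between purity of $\mot$ and $\dumot$. First I would invoke Theorem \ref{thm:main-theorem}: since $E$ is $t$-finite, $E$ is also abelian, so both $\mot$ and $\dumot$ are finitely generated, and Theorem \ref{thm:purity} applies to $\mot$. In particular, the Newton polygon $N_{\lambda_d}$ consists of exactly one edge if and only if $\mot$ is pure, and in that case $w(\mot)=\tfrac{1}{s}$, where $s$ is the slope of the unique edge.

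Next I would apply \cite[Thm.~5.29]{uh-akj:pthshcff} (already referenced in the preceding remark) to the $t$-module $E$, which is both abelian and $t$-finite. That theorem asserts that $\mot$ is pure if and only if $\dumot$ is pure, and that in this case $w(\dumot)=-w(\mot)$. Combined with the previous step, the equivalence ``$\dumot$ pure $\iff$ $N_{\lambda_d}$ has exactly one edge'' follows immediately, and the weight of $\dumot$ equals $-\tfrac{1}{s}$.

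There is essentially no obstacle: the corollary is a two-line consequence of Theorem \ref{thm:purity} plus the external sign-flip result. The only point worth double-checking is that the hypotheses of \cite[Thm.~5.29]{uh-akj:pthshcff} really are met here, namely that $E$ is both abelian and $t$-finite; this is exactly what Theorem \ref{thm:main-theorem} guarantees from the single assumption of $t$-finiteness. So the proof I would write is just: \emph{By Theorem~\ref{thm:main-theorem}, $E$ is abelian, so Theorem~\ref{thm:purity} applies. Combined with \cite[Thm.~5.29]{uh-akj:pthshcff}, which gives $\dumot$ pure $\Leftrightarrow$ $\mot$ pure and $w(\dumot)=-w(\mot)$, the claim follows.} \qed
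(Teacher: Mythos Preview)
Your proposal is correct and matches the paper's argument exactly: the corollary is stated as an immediate consequence of Theorem~\ref{thm:purity} together with the last item of the preceding remark (i.e.\ \cite[Thm.~5.29]{uh-akj:pthshcff}), using Theorem~\ref{thm:main-theorem} to ensure $E$ is both abelian and $t$-finite.
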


The proof of Theorem \ref{thm:purity} will take up the rest of this section. We haven't dealt with the Laurent series field $K\ls{\frac{1}{t}}$ and the
vector space $K\ls{\frac{1}{t}}\otimes_{K[t]} \mot$, yet. For making use of the previous sections, the main preparation for the proof is to make a connection between $K\ls{\frac{1}{t}}\otimes_{K[t]} \mot$ and $K\sls{\sigma}\otimes_{K\{\tau\}} \mot$. This culminates in Prop.~\ref{prop:iso-of-completions} showing that these two modules are isomorphic for an abelian $t$-module $E$. 

\medskip

We use the notation from Section \ref{sec:t-modules}, and assume throughout that the $t$-module $E$ is abelian. In particular,
 $\{\kappa_1,\ldots, \kappa_d\}$ is a fixed $K\{\tau\}$-basis of the $t$-motive $\mot$, and $D\in \Mat_{d\times d}(K\{\tau\})$ is the matrix representing multiplication by $t$, i.e.~
\[ t\cdot \svect{\kappa}{d} = D\cdot \svect{\kappa}{d}.\]
Further we let  $\mothat=K\sls{\sigma}\otimes_{K\{\tau\}} \mot$ which is a $K\sls{\sigma}$-vector space with basis given by $\{\kappa_1,\ldots, \kappa_d\}$, and we denote by $\stlatt\subseteq \mothat$ the $K\sps{\sigma}$-lattice generated by $\{\kappa_1,\ldots, \kappa_d\}$.
Be aware that we have a direct sum decomposition of $K$-vector spaces
\begin{equation}\label{eq:mot+lambda}
\mothat= \mot \oplus \sigma\stlatt.
\end{equation}
%This is easily seen by writing every element in $\mothat$ as a $K\sls{\sigma}$-linear combination of the $\kappa_k$.

When speaking of convergence in $\mothat$, we mean convergence with respect to the norm given by
\[  \norm{\sum_{i=1}^d x_i\kappa_i} :=\max\{ \rho^{v(x_i)} \mid i=1,\ldots,d \} \]
for some $0<\rho<1$. Here $x_1,\ldots, x_d\in K\sls{\sigma}$, and $v(x)=\ord_\sigma(x)$ denotes the valuation given in Example \ref{exmp:skew-Laurent-series-ring}. As in the case of commutative coefficient rings, $\mothat$ is complete with respect to this norm, since $K\sls{\sigma}$ is complete.
A coordinate-free description of the convergence is given by the following:\footnote{The equivalence of this description with the definition is easily verified.}\\
A sequence $(m_n)_{n\geq 0}$ of elements in $\mothat$ is converging if and only if for all $k\geq 0$, there is $n_k\geq 0$ such that
\[ m_{n+1}-m_n\in \sigma^k\stlatt \quad \forall n\geq n_k.\] 

\begin{lem}\label{lem:ps-z-action}
\begin{enumerate}
\item \label{lem:ps-z-action:item:1} There is some $n_0\in \NN$ such that 
\[\forall k\geq 1, \forall n\geq kn_0:\quad t^{-n}\stlatt \subseteq \sigma^k\stlatt.\]
\item \label{lem:ps-z-action:item:2} The $K\sps{\sigma}$-submodule $\stlatt'=\sum_{i\geq 0} t^{-i}\stlatt$ of $\mothat$ is a $K\sps{\sigma}$-lattice satisfying $t^{-1}\stlatt'\subseteq \stlatt'$.
\item \label{lem:ps-z-action:item:3} The $t$-action on $\mot$ extends to an action of $K\ls{\frac{1}{t}}$ on $\mothat$.\footnote{Of course this action does not commute with the $K\sls{\sigma}$-action, as $K$ does not commute with $\sigma$, but Laurent series in $\Fq\ls{\frac{1}{t}}$ do.}
\item \label{lem:ps-z-action:item:4} Let $m_1,\ldots, m_k\in \mothat$, and let $(f_j^{(n)})_{n\geq 0}$ for $j=1,\ldots k$ be sequences in $K\ls{\frac{1}{t}}$ that converge to some element $f_j\in K\ls{\frac{1}{t}}$ with respect to the $t^{-1}$-adic topology. Then the sequence $\left(\sum_{j=1}^k f_j^{(n)}m_j\right)_{n\geq 0}$ of elements in $\mothat$ converges to $\sum_{j=1}^k f_jm_j$ (with respect to $\norm{.}$).
\end{enumerate}
\end{lem}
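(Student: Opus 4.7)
The plan is to derive part (1) from Corollary~\ref{cor:positive-slopes} using the abelianness of $E$, and then to build parts (2), (3), (4) on top of (1). The key structural fact throughout is that $t$ is central in $K\sls{\sigma}$, so the $t$-action commutes with multiplication by $\sigma$ and with the entire $K\sls{\sigma}$-action on $\mothat$. For (1): since $E$ is abelian, Theorem~\ref{thm:main-theorem} tells us that the Newton polygon of $\lambda_d$ has only positive slopes, and the same holds for every invariant factor (each divides $\lambda_d$). Applying Theorem~\ref{thm:decomposition-in-one-slopes} to $\mothat$ followed by Corollary~\ref{cor:positive-slopes} to the lattice $\stlatt$, I obtain $n_0$ with $t^n\stlatt \supseteq \sigma^{-1}\stlatt$ for $n \geq n_0$. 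Using that $t$ commutes with $\sigma$, this rearranges to $t^{-n}\stlatt \subseteq \sigma\stlatt$ for $n \geq n_0$ (which needs $t$-invertibility on $\mothat$; this follows from $t^n$ being a surjective $K\sls{\sigma}$-endomorphism of a finite-dimensional space). The general inclusion $t^{-n}\stlatt \subseteq \sigma^k\stlatt$ for $n \geq kn_0$ then follows by induction on $k$: factor $t^{-n} = t^{-(n-n_0)}\cdot t^{-n_0}$, apply the $k=1$ case to the inner factor, commute $\sigma$ past $t^{-(n-n_0)}$, and invoke the induction hypothesis with $n-n_0 \geq (k-1)n_0$.

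For part (2), the $k=1$ case of (1) in particular gives $t^{-n}\stlatt \subseteq \stlatt$ for $n \geq n_0$, so the sum defining $\stlattprime$ collapses to the finite sum $\sum_{i=0}^{n_0-1}t^{-i}\stlatt$. Each summand is a $K\sps{\sigma}$-lattice in $\mothat$ (since $t^{-i}$ is a $K\sls{\sigma}$-linear automorphism), so by Remark~\ref{rem:lattices}(4) the finite sum $\stlattprime$ is again a lattice, and $t^{-1}\stlattprime \subseteq \stlattprime$ is immediate from the definition.

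For part (3), given $f = \sum_{i\geq 0} a_i t^{-i} \in K\ps{\frac{1}{t}}$ and $m \in \stlatt$, I define $f\cdot m := \sum_i a_i t^{-i}m$. Convergence follows from part (1) combined with the identity $a\sigma^k = \sigma^k a^{q^k}$ (valid since $\sigma a = a^{1/q}\sigma$), which gives $K\cdot \sigma^k\stlatt \subseteq \sigma^k\stlatt$ and hence $a_i t^{-i}m \in \sigma^{\lfloor i/n_0\rfloor}\stlatt$; the partial sums are then Cauchy in the complete space $\mothat$. This extends to $m\in\mothat$ via $m = \sigma^{-l}m'$ with $m'\in \stlatt$ (using commutation of $t^{-1}$ with $\sigma^{-l}$) and to general $f\in K\ls{\frac{1}{t}}$ by multiplication by powers of $t$; compatibility with the given $K[t]$-action on $\mot$ is automatic.

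For part (4), by additivity it suffices to show that $g_n\to 0$ in $K\ls{\frac{1}{t}}$ implies $g_n m\to 0$ in $\mothat$ for each fixed $m$. Reducing to $m\in\stlatt$ by a $\sigma^{-l}$ shift and writing $g_n = t^{-N_n}\tilde g_n$ with $\tilde g_n \in K\ps{\frac{1}{t}}$ and $N_n\to\infty$, part (3) gives $\tilde g_n m \in \stlattprime$ (using that $\stlattprime$, as a finitely generated $K\sps{\sigma}$-module, is $\sigma$-adically closed in $\mothat$). Then $g_n m = t^{-N_n}(\tilde g_n m) \in t^{-N_n}\stlattprime \subseteq \sigma^{\lfloor N_n/n_0\rfloor}\stlatt$ by part (1), which tends to $0$. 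The main obstacle is part (3): tracking how the $K$-coefficients in $K\ls{\frac{1}{t}}$ twist across $\sigma$ when intertwining the two actions; once the centrality of $t$ unlocks this commutation, the remaining bookkeeping is routine.
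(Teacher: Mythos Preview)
Your proof is correct and follows essentially the same route as the paper's: part~(1) via Theorem~\ref{thm:main-theorem} and Corollary~\ref{cor:positive-slopes} plus induction on $k$, part~(2) by collapsing the infinite sum to $\sum_{i=0}^{n_0-1}t^{-i}\stlatt$, part~(3) by showing the partial sums are Cauchy from part~(1), and part~(4) by bounding $(f_j^{(n)}-f_j)m_j$ in shrinking $\sigma$-powers of a lattice. You are in fact a bit more explicit than the paper in a few places---you spell out why $t$ acts invertibly on $\mothat$, you invoke closedness of $\stlattprime$ to justify $\tilde g_n m\in\stlattprime$, and you route part~(4) through $\stlattprime$ rather than writing the (slightly imprecise) containment $(f_j^{(n)}-f_j)m_j\in t^{-q_n}\sigma^l\stlatt$ directly---but these are clarifications of the same argument, not a different approach.
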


\begin{proof}
\begin{enumerate}
\item Since $E$ is abelian, all slopes of the invariant factors $\lambda_1,\ldots, \lambda_d$ are positive by Thm.~\ref{thm:main-theorem}. Hence, we can apply Cor.~\ref{cor:positive-slopes} to the standard lattice $\stlatt$, and obtain $n_0\in \NN$ such that $t^n\stlatt\supseteq \sigma^{-1}\stlatt$ for $n\geq n_0$, or equivalently, $t^{-n}\stlatt\subseteq \sigma\stlatt$.

Further inductively, for all $k> 1$, for all $n\geq kn_0$:
\[ t^{-n}\stlatt = t^{-n+n_0}t^{-n_0}\stlatt \subseteq t^{-n+n_0}\sigma\stlatt \subseteq \sigma^{k-1}\sigma\stlatt =\sigma^k\stlatt.\]
\item It only has to be shown that $\stlatt'$ is a finitely generated $K\sps{\sigma}$-submodule. By the first part, however,
$t^{-n}\stlatt \subseteq \sigma^k\stlatt\subseteq\stlatt$ for all $n\geq n_0$, and hence $\stlatt'$ equals the finite sum $\sum_{i=0}^{n_0-1} t^{-i}\stlatt$, and hence is finitely generated.
\item By the first part and the description of convergence, for any $f=\sum_{i=i_0}^\infty \alpha_i t^{-i}\in K\ls{\frac{1}{t}}$, and $m\in \mothat$, the sequence
$(\sum_{i=i_0}^n \alpha_i t^{-i}m)_{n\geq 0}$ is converging, and hence
\[ f\cdot m := \lim_{n\to \infty} \sum_{i=i_0}^n \alpha_i t^{-i}m \]
is well-defined. Verifying that this indeed describes a $K\ls{\frac{1}{t}}$-action is a standard computation.

\item As $\stlatt$ is a lattice in $\mothat$, there exists $l\in\ZZ$ such that $m_1,\ldots, m_k\in \sigma^l\stlatt$. For any $n\geq 0$, let 
\[ q_n:=\min \left\{ \ord_{1/t}( f_j^{(n)}-f_j ) \bigmid j=1,\ldots, k\right\} \]
respectively $q_n=n$, if $f_j^{(n)}=f_j$ for all $j$. Then by definition of the $t^{-1}$-adic convergence, the sequence $(q_n)_{n\geq 0}$ tends to infinity. By the previous parts, for all  $n$ where $q_n>0$, we have
\[ \sum_{j=1}^k f_j^{(n)}m_j - \sum_{j=1}^k f_jm_j = \sum_{j=1}^k (f_j^{(n)}-f_j) m_j \in t^{-q_n}\sigma^l\stlatt
\subseteq \sigma^{\lfloor \frac{q_n}{n_0}\rfloor+l}\stlatt. \]
Therefore, 
\[ \lim_{n\to \infty} \norm{\sum_{j=1}^k f_j^{(n)}m_j - \sum_{j=1}^k f_jm_j}\leq \lim_{n\to \infty} \rho^{\lfloor \frac{q_n}{n_0}\rfloor+l}=0,\]
i.e., $\sum_{j=1}^k f_j^{(n)}m_j$ converges to $\sum_{j=1}^k f_jm_j$.
\end{enumerate}
\end{proof}

%Recall that $\stlatt\subseteq \mothat$ denotes the $K\sps{\sigma}$-lattice generated by $\{\kappa_1,\ldots, \kappa_d\}$.
From now on, we also fix a $K[t]$-basis $\{b_1,\ldots,b_r\}$ of the $t$-motive $\mot$. This is also a $K\ls{\frac{1}{t}}$-basis of $K\ls{\frac{1}{t}}\otimes_{K[t]} \mot$.

\begin{lem}\label{lem:bound-beta}
Let $\Lambda$ be a $K\sps{\sigma}$-lattice in $\mothat$.
\begin{enumerate}
\item The intersection $\mot\cap \Lambda$ is a finite dimensional $K$-vector space.
\item There exists $\beta_\Lambda\in \NN$ with the following property:
For all $g_1,\ldots, g_r\in K[t]$ such that $\sum_{j=1}^r g_jb_j\in \mot\cap \Lambda$, one has $\deg_t(g_j)\leq \beta_\Lambda$ for $j=1,\ldots, r$.
\end{enumerate}
\end{lem}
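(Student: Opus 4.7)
The plan is to reduce everything to elementary linear algebra over $K$ via the two bases of $\mot$. For the first part, I would begin by using that $\Lambda$ is a $K\sps{\sigma}$-lattice in $\mothat$, so by Remark~\ref{rem:lattices} there exists $N\in \NN$ with $\Lambda \subseteq \sigma^{-N}\stlatt$. It therefore suffices to show that $\mot \cap \sigma^{-N}\stlatt$ is finite-dimensional over $K$. For this I would expand a general element of $\mot$ in the $K\{\tau\}$-basis $(\kappa_1,\ldots,\kappa_d)$ as $m=\sum_{i=1}^d h_i\kappa_i$ with $h_i\in K\{\tau\}$, and observe that under the embedding $K\{\tau\}\hookrightarrow K\sls{\sigma}$ the element $h_i=\sum_k \alpha_{i,k}\tau^k$ becomes $\sum_k \alpha_{i,k}\sigma^{-k}$, whose $\sigma$-valuation equals $-\deg_\tau(h_i)$. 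Hence the condition $m\in \sigma^{-N}\stlatt$ translates into $\deg_\tau(h_i)\leq N$ for each $i$, and so
\[
\mot \cap \sigma^{-N}\stlatt \;=\; \Bigl\{ \textstyle\sum_{i=1}^d h_i\kappa_i \,\Big|\, h_i\in K\{\tau\},\ \deg_\tau(h_i)\leq N\Bigr\},
\]
which is a $K$-vector space of dimension $d(N+1)$. This yields (1).

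For the second part, I would use part (1) to write $V:=\mot\cap \Lambda$ as a finite-dimensional $K$-vector space and pick a $K$-basis $m_1,\ldots,m_s$ of $V$. Since $\{b_1,\ldots,b_r\}$ is a $K[t]$-basis of $\mot$, each $m_k$ admits a unique expansion $m_k=\sum_{j=1}^r g_{k,j}b_j$ with $g_{k,j}\in K[t]$. I then define
\[
\beta_\Lambda := \max\bigl\{ \deg_t(g_{k,j}) \,\big|\, 1\leq k\leq s,\ 1\leq j\leq r \bigr\}\in \NN.
\]
Now any element of $V$ written in the $b$-basis as $\sum_{j=1}^r g_jb_j$ is a $K$-linear combination $\sum_k \alpha_k m_k$ of the basis $(m_k)$, and comparing the coefficients in the free $K[t]$-module $\mot$ gives $g_j=\sum_k \alpha_k g_{k,j}$. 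Thus $\deg_t(g_j)\leq \max_k \deg_t(g_{k,j})\leq \beta_\Lambda$, as required.

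The only slightly subtle point is the translation step, where one has to be careful that $K\{\tau\}\hookrightarrow K\sls{\sigma}$ sends $\tau^k$ to $\sigma^{-k}$, so that $\tau$-degrees correspond precisely to (negatives of) $\sigma$-valuations; everything else is essentially formal once this identification is made. No additional machinery from Sections~\ref{sec:newton-polygons}--\ref{sec:matrices} is needed here, since the abelian hypothesis (used crucially in Lemma~\ref{lem:ps-z-action} to endow $\mothat$ with a $K\ls{1/t}$-action) does not enter into the proof of this particular lemma.
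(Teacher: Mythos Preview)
Your proof is correct and follows essentially the same approach as the paper's: both reduce part~(1) to the inclusion $\Lambda\subseteq \sigma^{-N}\stlatt$ and then read off that $\mot\cap\sigma^{-N}\stlatt$ is finite-dimensional via the $\kappa$-basis, and both deduce part~(2) directly from the finite-dimensionality of $\mot\cap\Lambda$. Your version is just slightly more explicit (computing the dimension $d(N+1)$ and spelling out the bound via a chosen $K$-basis), but there is no substantive difference.
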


\begin{proof}
\begin{enumerate}
\item As $\Lambda$ is a $K\sps{\sigma}$-lattice in $\mothat$, there is some $l\geq 0$ such that
$\Lambda \subseteq \sigma^{-l}\stlatt$. Furthermore by definition, the intersection $\mot\cap \sigma^{-l}\stlatt$ consists of all elements of the form  $\sum_{k=1}^d \sum_{i=-l}^0 \alpha_{ki}\sigma^{i}\kappa_k$, and hence is a finite dimensional $K$-vector space. Therefore, also $\mot\cap \Lambda$ is finite dimensional.
\item By definition, $\{b_1,\ldots, b_r\}$ is a $K[t]$-basis of $\mot$. Hence, each element in $\mot\cap \Lambda$ can by uniquely written as $\sum_{j=1}^r g_jb_j$ or some $g_1,\ldots, g_r\in K[t]$. As the intersection is a finite dimensional $K$-vector space, there is an upper bound on the degrees of the $g_j$ that appear in these representations.
\end{enumerate}
\end{proof}

\begin{lem}\label{lem:approximation}
Let $m\in \mothat$ be arbitrary.
\begin{enumerate}
\item \label{item:1:lem:approximation} For all $n\geq 0$, there exist unique $f_1^{(n)},\ldots, f_r^{(n)}\in K\ls{\frac{1}{t}}$ such that
\begin{enumerate}
\item $t^nf_j^{(n)}\in K[t]$ for $j=1,\ldots, r$, and 
\item $t^n\cdot \left( \sum_{j=1}^r f_j^{(n)}b_j -m\right) \in \sigma\stlatt$.
\end{enumerate}
\item Let $f_j^{(n)}\in K\ls{\frac{1}{t}}$ be the elements determined in part \eqref{item:1:lem:approximation}. Then
\begin{enumerate}
\item For all $j=1,\ldots, r$, the sequence $(f_j^{(n)})_{n\geq 0}$ converges to some element $f_j\in K\ls{\frac{1}{t}}$ with respect to the $t^{-1}$-adic topology, and
%\item $\ord_{1/t}\left( f_j^{(n+1)}- f_j^{(n)}\right) \geq n+1-\beta$ for all $n\geq 0$, as well as
\item one has $\lim_{n\to \infty} \sum_{j=1}^r f_j^{(n)}b_j=\sum_{j=1}^r f_jb_j=m $ in $\mothat$.
\end{enumerate}
\end{enumerate}
\end{lem}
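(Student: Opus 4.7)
The plan is to exploit the direct sum decomposition $\mothat = \mot \oplus \sigma\stlatt$ from \eqref{eq:mot+lambda}, together with the compatibility of $t$- and $\sigma$-actions collected in Lemma~\ref{lem:ps-z-action}.

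For part (1), given $n \geq 0$, first decompose $t^n m = m_0 + m_1$ with $m_0 \in \mot$ and $m_1 \in \sigma\stlatt$. Since $\{b_1, \ldots, b_r\}$ is a $K[t]$-basis of $\mot$, uniquely write $m_0 = \sum_{j=1}^r p_j b_j$ with $p_j \in K[t]$, and set $f_j^{(n)} := t^{-n} p_j \in K\ls{\frac{1}{t}}$. Condition (a) is immediate, and (b) holds because
\[ t^n\Bigl(\sum_j f_j^{(n)} b_j - m\Bigr) = m_0 - t^n m = -m_1 \in \sigma\stlatt. \]
Uniqueness follows since if $(\tilde f_j^{(n)})$ also satisfies (a) and (b), then $h_j := f_j^{(n)} - \tilde f_j^{(n)}$ gives $t^n h_j \in K[t]$ and $\sum_j (t^n h_j) b_j \in \mot \cap \sigma\stlatt = \{0\}$, forcing each $h_j = 0$ by the basis property.

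For part (2)(a), set $m^{(n)} := \sum_j f_j^{(n)} b_j$. Multiplying the defining relation $t^n(m^{(n)}-m) \in \sigma\stlatt$ by $t$ and using that $t$ commutes with $\sigma$ as well as the bound $t\stlatt \subseteq \sigma^{-s}\stlatt$ with $s := \deg_\tau(\phi_t)$, one obtains $t^{n+1}(m^{(n)} - m) \in \sigma^{1-s}\stlatt$. Subtracting the analogous relation at level $n+1$ yields
\[ t^{n+1}(m^{(n)} - m^{(n+1)}) \in \mot \cap \sigma^{1-s}\stlatt, \]
the membership in $\mot$ being automatic because both $t^n h_j$ and $t^{n+1} h_j$ lie in $K[t]$. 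Lemma~\ref{lem:bound-beta} applied to the lattice $\sigma^{1-s}\stlatt$ then provides a constant $\beta$, independent of $n$, bounding the $t$-degrees of the coefficients. In particular $\ord_{1/t}(f_j^{(n+1)} - f_j^{(n)}) \geq n + 1 - \beta \to \infty$, so $(f_j^{(n)})_n$ is Cauchy in the $t^{-1}$-adic topology and converges to some $f_j \in K\ls{\frac{1}{t}}$.

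For part (2)(b), Lemma~\ref{lem:ps-z-action}\eqref{lem:ps-z-action:item:4} identifies $\lim_n m^{(n)} = \sum_j f_j b_j$ in $\mothat$. On the other hand, $m^{(n)} - m \in t^{-n}\sigma\stlatt \subseteq \sigma^{1 + \lfloor n/n_0\rfloor}\stlatt$ by Lemma~\ref{lem:ps-z-action}\eqref{lem:ps-z-action:item:1}, so $m^{(n)} \to m$ in $\mothat$ as well, and the two limits must coincide. The main obstacle is the $n$-uniform estimate in (2)(a): one must combine the single-step shift $t\stlatt \subseteq \sigma^{-s}\stlatt$ with the degree bound from Lemma~\ref{lem:bound-beta} applied to the correctly chosen lattice $\sigma^{1-s}\stlatt$. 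Everything else is then a matter of matching the $t^{-1}$-adic and $\sigma$-adic topologies.
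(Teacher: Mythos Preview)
Your proof is correct and follows essentially the same route as the paper. The only cosmetic difference is in part (2)(a): the paper applies Lemma~\ref{lem:bound-beta} to the lattice $\Lambda=\sigma\stlatt+t\sigma\stlatt$ directly, whereas you first bound $t\sigma\stlatt\subseteq \sigma^{1-s}\stlatt$ via $t\stlatt\subseteq\sigma^{-s}\stlatt$ and then apply the lemma to $\sigma^{1-s}\stlatt$; since $\sigma\stlatt+t\sigma\stlatt\subseteq\sigma^{1-s}\stlatt$, this yields the same conclusion.
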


\begin{proof}
\begin{enumerate}
\item Let $n\geq 0$. Using the decomposition \eqref{eq:mot+lambda}, we write $t^n m = m_\Lambda +m_\mot$ with uniquely determined
$m_\Lambda\in\sigma\stlatt$ and $m_\mot\in \mot$. As $\mot$ is a free $K[t]$-module with basis $\{b_1,\ldots, b_r\}$, there are unique $g_1,\ldots, g_r\in K[t]$ such that $m_\mot = \sum_{j=1}^r g_jb_j$. Hence, the elements $f_j^{(n)}:=t^{-n}g_j$ for $j=1,\ldots, r$ satisfy the two given properties.\\
On the other hand, these two properties imply that $t^n m= \left( \sum_{j=1}^r t^n f_j^{(n)}b_j -t^n m\right) +  \left( \sum_{j=1}^r t^n f_j^{(n)}b_j\right)$ is the unique decomposition of $t^n m$ into an element in $\sigma\stlatt$ and another in $\mot$. Hence, the constructed elements  $f_j^{(n)}$ are unique.
\item By the first condition on the $f_j^{(n)}$ and $f_j^{(n+1)}$, we have 
\[t^{n+1}\left( f_j^{(n+1)}- f_j^{(n)}\right)\in K[t],\] and hence 
\[ t^{n+1}\sum_{j=1}^r  \left( f_j^{(n+1)}- f_j^{(n)}\right)b_j\in \mot. \]
On the other hand, 
\begin{eqnarray*} 
&& t^{n+1}\sum_{j=1}^r  \left( f_j^{(n+1)} - f_j^{(n)}\right)b_j\\
 && \quad = t^{n+1}\cdot \left( \sum_{j=1}^r f_j^{(n+1)}b_j -m\right)
- t\cdot t^n \cdot \left( \sum_{j=1}^r f_j^{(n)}b_j -m\right) \in \sigma\stlatt + t\sigma\stlatt, 
\end{eqnarray*}
by the second property. So let $\Lambda=\sigma\stlatt+t\sigma\stlatt$, and let $\beta_\Lambda\in \NN$ be the minimal number satisfying the given property in Lemma \ref{lem:bound-beta}. 
We therefore have
\[ \deg_t\left( t^{n+1}\cdot ( f_j^{(n+1)}- f_j^{(n)}) \right)\leq \beta_\Lambda,\]
and hence
\[ \ord_{1/t} ( f_j^{(n+1)}- f_j^{(n)})\geq -\beta_\Lambda - \ord_{1/t} (t^{n+1})=-\beta_\Lambda+ n+1.\]
This shows that the sequence $(f_j^{(n)})_{n\geq 0}$ converges $t^{-1}$-adically to some element $f_j\in K\ls{\frac{1}{t}}$.

By Lemma \ref{lem:ps-z-action}\eqref{lem:ps-z-action:item:4}, this implies that $\sum_{j=1}^r   f_j^{(n)} b_j$ converges in $\mothat$ to the element
$\sum_{j=1}^r f_jb_j$.

Furthermore, by the given property, for all $n\geq 0$
\[ t^{n}\left( \sum_{j=1}^r   f_j^{(n)} b_j - m\right) \in \sigma\stlatt,\]
and hence, 
\[ \sum_{j=1}^r   f_j^{(n)} b_j - m \in t^{-n}\sigma\stlatt\subseteq \sigma^{\lfloor \frac{n}{n_0}\rfloor +1}\stlatt,\]
where $n_0$ as in Lemma \ref{lem:ps-z-action}.
Hence, the sequence $(\sum_{j=1}^r f_j^{(n)} b_j)_{n\geq 0}$ converges to $m$.
\end{enumerate}
\end{proof}

\begin{prop}\label{prop:iso-of-completions} \
\begin{enumerate} 
\item We have a natural isomorphism of $K\ls{\frac{1}{t}}\{\tau\}$-modules $K\ls{\frac{1}{t}}\otimes_{K[t]} \mot \cong \mothat$ given by
\[ \sum_{j=1}^r f_j\otimes b_j \mapsto \sum_{j=1}^r f_j\cdot b_j \]
for all $f_1,\ldots, f_r\in  K\ls{\frac{1}{t}}$.
\item \label{prop:iso-of-completions:item:2} If $\Lambda$ is a $K\sps{\sigma}$-lattice in $\mothat$ such that $t^{-1}\Lambda\subseteq \Lambda$, then $\Lambda$ is a $K\ps{\frac{1}{t}}$-lattice in $\mothat$.
\item \label{prop:iso-of-completions:item:3} If $\Lambda$ is a $K\ps{\frac{1}{t}}$-lattice in $\mothat$ such that $\sigma\Lambda\subseteq \Lambda$, then $\Lambda$ is a $K\sps{\sigma}$-lattice in $\mothat$.
\end{enumerate}
\end{prop}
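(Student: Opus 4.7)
For part (1), my plan is to define the map using the $K\ls{1/t}$-action constructed in Lemma~\ref{lem:ps-z-action}\eqref{lem:ps-z-action:item:3}; it is well-defined because both sides yield the same $t$-action on the $b_j$, and it is $K\ls{1/t}\{\tau\}$-linear by construction. Surjectivity is immediate from Lemma~\ref{lem:approximation}(2). For injectivity, suppose $\sum_j f_j\otimes b_j$ maps to $0$, so $\sum_j f_j b_j=0$ in $\mothat$. After multiplying by a sufficiently large power of $t$ I may assume $f_j\in K\ps{1/t}$. I would then apply Lemma~\ref{lem:approximation}\eqref{item:1:lem:approximation} to $m=0$: the unique approximations $f_j^{(n)}$ satisfying the properties (a) and (b) are trivially $0$, and by combining this uniqueness with the direct-sum decomposition $\mothat=\mot\oplus\sigma\stlatt$ applied to the natural truncations of $(f_j)$, one forces all $f_j=0$.

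For part (2), I split the proof into three sub-steps. First, $K\ps{1/t}$-stability: for $f=\sum_{i\geq 0} c_i t^{-i}\in K\ps{1/t}$ and $m\in\Lambda$, the partial sums $\sum_{i=0}^n c_i t^{-i} m$ lie in $\Lambda$ (by $t^{-1}\Lambda\subseteq\Lambda$ and $K\subseteq K\sps{\sigma}$), converge $\sigma$-adically to $f\cdot m$ by Lemma~\ref{lem:ps-z-action}\eqref{lem:ps-z-action:item:4}, and $\Lambda$ is $\sigma$-adically closed in $\mothat$ (being a finitely generated free $K\sps{\sigma}$-module, hence $\sigma$-adically complete). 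Second, $\Lambda$ contains a $K\ls{1/t}$-basis: from $\sigma^{l_1}\stlatt\subseteq\Lambda$ and the centrality of $t$, one has $\bigcup_n t^n\Lambda\supseteq\sigma^{l_1}\bigcup_n t^n\stlatt=\mothat$, so each $b_j$ lies in some $t^{n_j}\Lambda$; taking $n=\max_j n_j$ yields $t^{-n}b_1,\ldots,t^{-n}b_r\in\Lambda$.

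The third and main sub-step is finite generation over $K\ps{1/t}$, which I plan via a Nakayama-type argument. Since $t$ is central, $t^{-1}$ is a $K\sps{\sigma}$-linear endomorphism of the free rank-$d$ module $\Lambda$, invertible on $\mothat=K\sls{\sigma}\otimes_{K\sps{\sigma}}\Lambda$. Corollary~\ref{cor:diagonalising-over-valuation-ring} then supplies bases of source and target of $t^{-1}$ in which this endomorphism is represented by $\diag(\sigma^{\mu_1},\ldots,\sigma^{\mu_d})$ with finite non-negative $\mu_j$, yielding $\Lambda/t^{-1}\Lambda\cong\bigoplus_j K\sps{\sigma}/K\sps{\sigma}\sigma^{\mu_j}$ with finite $K$-dimension $\sum_j\mu_j$. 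Furthermore, $\Lambda$ is $t^{-1}$-adically complete: any $t^{-1}$-Cauchy sequence is $\sigma$-Cauchy by Lemma~\ref{lem:ps-z-action}\eqref{lem:ps-z-action:item:1} and converges $\sigma$-adically in $\Lambda$, and the limit is indeed a $t^{-1}$-limit because each $t^{-N}\stlatt$ is $\sigma$-adically closed. Iteratively lifting a $K$-basis of $\Lambda/t^{-1}\Lambda$ and writing each $m\in\Lambda$ as $\sum c_i^{(0)} f_i+t^{-1}m_1$ and so on then produces $K\ps{1/t}$-generators of $\Lambda$.

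For part (3), the plan is to reduce to part (2). Given a $K\ps{1/t}$-lattice $\Lambda$ with $\sigma\Lambda\subseteq\Lambda$, we have $t^{-N_1}\stlatt'\subseteq\Lambda\subseteq t^{-N_2}\stlatt'$ for suitable integers $N_1\geq N_2$. By part (2), $\stlatt'$ is a $K\sps{\sigma}$-lattice, so the left inclusion exhibits a $K\sps{\sigma}$-lattice inside $\Lambda$: this supplies a $K\sls{\sigma}$-basis of $\mothat$ contained in $\Lambda$ and shows $\Lambda$ is $\sigma$-adically open in $\mothat$. Since open subgroups of a topological group are closed, $\Lambda$ is $\sigma$-adically closed, so partial $\sigma$-sums of any $\sum_i c_i\sigma^i\in K\sps{\sigma}$ acting on $\Lambda$ converge inside $\Lambda$, giving full $K\sps{\sigma}$-stability. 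Finite generation over $K\sps{\sigma}$ then follows because $\Lambda$ is a $K\sps{\sigma}$-submodule of the finitely generated $K\sps{\sigma}$-module $t^{-N_2}\stlatt'$ and $K\sps{\sigma}$ is a skew PID (Thm.~\ref{thm:modules-over-pids}\eqref{nj:thm17}). The hardest step of the whole proof is the finite-generation sub-step in part (2): its success hinges on carefully combining the non-commutative Smith-normal-form diagonalization of $t^{-1}$ over $K\sps{\sigma}$ with the asymmetric comparison between the $\sigma$-adic and $t^{-1}$-adic topologies afforded by Lemma~\ref{lem:ps-z-action}\eqref{lem:ps-z-action:item:1}.
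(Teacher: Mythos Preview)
Your overall approach is correct, and parts (1) and (3) track the paper's argument; the paper is simply terser (in (3) it sandwiches $\Lambda$ between $t^{l_1}\stlatt'$ and $t^{l_2}\stlatt'$ and invokes Remark~\ref{rem:lattices} directly, rather than arguing via open-implies-closed).

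The genuine divergence is the finite-generation step in part (2). You work by hand: diagonalize the $K\sps{\sigma}$-linear endomorphism $t^{-1}$ of $\Lambda$ to see $\dim_K(\Lambda/t^{-1}\Lambda)<\infty$, prove $t^{-1}$-adic completeness of $\Lambda$ by comparing topologies, and lift a $K$-basis of $\Lambda/t^{-1}\Lambda$ to $K\ps{1/t}$-generators. The paper instead appeals to the abstract criterion of Lemma~\ref{lem:O-lattice-description}: once $\Lambda$ is a $K\ps{1/t}$-submodule containing a basis, it is a lattice provided every nonzero $m$ has some $K\ls{1/t}$-multiple outside $\Lambda$. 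That escape condition is verified in one line from Corollary~\ref{cor:positive-slopes}: $t^{n_0}\Lambda\supseteq\sigma^{-1}\Lambda$ gives $t^{-ln_0}\Lambda\subseteq\sigma^{l}\Lambda$, and since $\Lambda$ is a $K\sps{\sigma}$-lattice every nonzero $m$ lies outside some $\sigma^{l}\Lambda$, whence $t^{ln_0}m\notin\Lambda$. The paper's route is much shorter because your completeness-plus-Nakayama machinery is already packaged into Lemma~\ref{lem:O-lattice-description}; your route has the virtue of being self-contained and of making the interplay of the $\sigma$-adic and $t^{-1}$-adic filtrations explicit.

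One caution on your injectivity sketch in (1): the natural truncations $f_j^{[n]}$ of a putative relation $\sum_j f_j b_j=0$ satisfy condition (a) of Lemma~\ref{lem:approximation}\eqref{item:1:lem:approximation}, but condition (b), namely $t^{n}\sum_j f_j^{[n]}b_j\in\sigma\stlatt$, is not automatic, so the uniqueness clause there does not apply directly. A clean repair is to observe that $\sum_j (t^{n} f_j^{[n]}) b_j\in\mot\cap t^{-1}\Lambda_0$ for a fixed $K\sps{\sigma}$-lattice $\Lambda_0$ containing all $K\ps{1/t}\cdot b_j$ (e.g.\ a suitable $\sigma^{-l}\stlatt'$), and then invoke Lemma~\ref{lem:bound-beta} to bound $\deg_t(t^{n} f_j^{[n]})$ independently of $n$, forcing $f_j=0$.
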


\begin{proof}
\begin{enumerate}
\item By Lemma \ref{lem:ps-z-action}, we have a $K\ls{\frac{1}{t}}$-action on $\mothat$, and hence the map is well defined.
As the left hand side is the $K\ls{\frac{1}{t}}$-vector space with basis $\{b_1,\ldots, b_r\}$, it suffices to show that every element in $\mothat$ can uniquely be written as $\sum_{j=1}^r f_j b_j$ with $f_1,\ldots, f_r\in  K\ls{\frac{1}{t}}$. This, however, was just proven in Lemma \ref{lem:approximation}.
%\item By Lemma \ref{lem:ps-z-action}(3), the standard lattice $\stlatt$ is a $K\ps{\frac{1}{t}}$-submodule of $\mothat$. We will verify the conditions given in Lemma \ref{lem:O-lattice-description} for $\cO=K\ps{\frac{1}{t}}$, and $\cL=K\ls{\frac{1}{t}}$.
%Since $\stlatt$ is a $K\sps{\sigma}$-lattice, there is some $l\geq 0$ such that the $K\ls{\frac{1}{t}}$-basis $b_1,\ldots, b_r$  of $\mothat$ is in $\sigma^{-l}\stlatt$. Then
%\[ t^{-l}b_j \in  t^{-l}\sigma^{-l}\stlatt \subseteq \sigma^l\sigma^{-l}\stlatt=\stlatt.\]
%Hence, $\stlatt$ contains a $K\ls{\frac{1}{t}}$-basis of $\mothat$.
%
%Let $m\in \mothat$, $m\ne 0$ be an arbitrary element. Assume that $x\cdot m\in \stlatt$ for all $x\in K\ls{\frac{1}{t}}$. Then in particular, $t^n m\in \stlatt$ for all $n\geq 0$, and hence $t^n\sigma m\in \sigma\stlatt$.
%However, this implies that the unique sequences $(f_j^{(n)})$ attached to $\sigma m$ by Lemma \ref{lem:approximation} are all identically zero. Therefore, $\sigma m=0$, and $m=0$ contrary to the assumption.
\item By assumption the $K\sps{\sigma}$-lattice $\Lambda$ is stable under the $t^{-1}$-action, and hence it is also a $K\ps{\frac{1}{t}}$-module. Since $\Lambda$ is a $K\sps{\sigma}$-lattice, there is some $l\geq 0$ such that the $K\ls{\frac{1}{t}}$-basis $b_1,\ldots, b_r$  of $\mothat$ is in $\sigma^{-l}\Lambda$. By Cor.~\ref{cor:positive-slopes}, there is $n_0\geq 0$ such that
$t^{n_0}\Lambda\supseteq \sigma^{-1}\Lambda$, and hence $\sigma^{-l}\Lambda\subseteq t^{ln_0}\Lambda$.
Therefore, the $K\ls{\frac{1}{t}}$-basis $(t^{-ln_0}b_1,\ldots, t^{-ln_0}b_r)$ of $\mothat$ lies inside $\Lambda$.

On the other hand, since $\Lambda$ is a $K\sps{\sigma}$-lattice, for arbitrary $m\in \mothat$, there is $l>0$ such that $\sigma^{-l}m\not\in\Lambda$, i.e.~$m\not\in\sigma^l\Lambda$.
Since $t^{-ln_0}\Lambda\subseteq \sigma^{l}\Lambda$, this implies $m\not\in t^{-ln_0}\Lambda$. Hence, $t^{ln_0}m\not\in \Lambda$.
By Lemma \ref{lem:O-lattice-description}, $\Lambda$ is a $K\ps{\frac{1}{t}}$-lattice.
\item By assumption the $K\ps{\frac{1}{t}}$-lattice $\Lambda$ is stable under the $\sigma$-action, and hence it is also a $K\sps{\sigma}$-module. Let $\stlatt'=\sum_{i\geq 0} t^{-i}\stlatt$ be the $K\sps{\sigma}$-lattice defined in Lemma \ref{lem:ps-z-action}\eqref{lem:ps-z-action:item:2}. By the previous part, it is also a $K\ps{\frac{1}{t}}$-lattice.

As $\Lambda$ and $\stlatt'$ are both $K\ps{\frac{1}{t}}$-lattices, there are $l_1,l_2\in\ZZ$ such that 
\[ t^{l_1}\stlatt' \subseteq \Lambda \subseteq t^{l_2}\stlatt'. \]
As $\stlatt'$ is a $K\sps{\sigma}$-lattice, so are $t^{l_1}\stlatt'$ and $t^{l_2}\stlatt'$, and therefore by Remark \ref{rem:lattices}, $\Lambda$ is a $K\sps{\sigma}$-lattice.
\end{enumerate}
\end{proof}

We finally prove the main theorem of this section.

\begin{proof}[Proof of Theorem \ref{thm:purity}]
As $K\ls{\frac{1}{t}}\otimes_{K[t]} \mot$ is isomorphic to $\mothat$ by Proposition \ref{prop:iso-of-completions}, the aim is to show that there is a $K\ps{\frac{1}{t}}$-lattice $\Lambda$ in $\mothat$ satisfying $t^u\Lambda=\tau^v\Lambda$ for some integers $u,v\in \ZZ$, $u\ne 0$ if and only if the last invariant factor $\lambda_d$ has only one edge, and that in this case the slope $s$ of this edge equals $\frac{v}{u}$.

By Theorem \ref{thm:elementary-divisor-theorem}, $\mothat$ is isomorphic to 
$\cL[t]/\lambda_1\cL[t]\oplus \ldots \oplus\cL[t]/\lambda_d\cL[t]$
as $\cL[t]$-modules, where $\cL=K\sls{\sigma}$. 
By Thm.~\ref{thm:decomposition-in-one-slopes}, this can be decomposed further into a direct sum $\bigoplus_{i=1}^k \cL[t]/f_i\cL[t]$, for some monic polynomials $f_i\in \cL[t]$ whose Newton polygons consist of one edge. Each $f_i$ is a divisor of some $\lambda_j$ and hence a divisor of $\lambda_d$, and all similarity classes of divisors of $\lambda_d$ occur. So if the Newton polygon of $\lambda_d$ has only one edge, all the Newton polygons of the $f_i$ have the same slope. If the Newton polygon of $\lambda_d$ has more than one edge, there are $f_i$'s whose Newton polygons have different slopes.

So assume first that the Newton polygon of $\lambda_d$ has only one edge, and let $s$ be its slope. Then the Newton polygons of all $f_i$ have slope $s$. By Cor.~\ref{cor:stable-O-lattices}, each factor $\cL[t]/f_i\cL[t]$ contains a $K\sps{\sigma}$-lattice $\Lambda_i$ satisfying $\sigma^{sd_i}t^{d_i}\Lambda_i=\Lambda_i$ as well as $t^{-1}\Lambda_i\subseteq \Lambda_i$, where $d_i=\deg_t(f_i)$.

Let $d=\mathrm{lcm}(d_1,\ldots, d_k)$ be the least common multiple of all $d_i$, then for all $i$, 
\[ \sigma^{sd}t^{d}\Lambda_i=\Lambda_i,\] 
and hence the $K\sps{\sigma}$-lattice $\Lambda := \bigoplus_{i=1}^k \Lambda_i$ in $\mothat=\bigoplus_{i=1}^k \cL[t]/f_i\cL[t]$
satisfies $\sigma^{sd}t^{d}\Lambda=\Lambda$, as well as $t^{-1}\Lambda\subseteq \Lambda$.

By Proposition \ref{prop:iso-of-completions}\eqref{prop:iso-of-completions:item:2}, $\Lambda$ is also a 
$K\ps{\frac{1}{t}}$-lattice of $\mothat$, and satisfies $\sigma^{sd}t^{d}\Lambda=\Lambda$, i.e.
\[ t^{d}\Lambda=\tau^{sd}\Lambda. \]
Therefore, the $t$-motive is pure with weight $\frac{d}{sd}=\frac{1}{s}$.

\medskip

For the converse direction,
%Now, we consider the case that the Newton polygon of $\lambda_d$ has more than one edge. Again by Cor.~\ref{cor:stable-O-lattices}, each factor $\cL[t]/f_i\cL[t]$ contains a $K\sps{\sigma}$-lattice $\Lambda_i$ satisfying $\sigma^{s_id_i}t^{d_i}\Lambda_i=\Lambda_i$ as well as $t^{-1}\Lambda_i\subseteq \Lambda_i$, where $d_i=\deg_t(f_i)$ and $s_i$ is the slope of the Newton polygon of $f_i$. 
assume that the $t$-motive $\mot$ is pure, i.e.~there is a $K\ps{\frac{1}{t}}$-lattice $\Lambda$ in $\mothat$ satisfying $t^u\Lambda=\tau^v\Lambda$ for some $u,v\in \ZZ$, $u\ne 0$.
If $\Lambda$ is not stable under the action of $\sigma$ (i.e.~$\sigma\Lambda\not\subseteq \Lambda$), we can replace $\Lambda$ by the $K\ps{\frac{1}{t}}$-lattice  
$\Lambda'=\sum_{i=0}^{v-1}\sigma\Lambda$ which 
also satisfies $t^u\Lambda'=\tau^v\Lambda'$, and further $\sigma\Lambda'\subseteq \Lambda'$ (since $\sigma^v\Lambda=t^{-u}\Lambda\subseteq \Lambda$). So without loss of generality, we can assume that $\Lambda$ satisfies $\sigma\Lambda\subseteq \Lambda$.
Therefore by Prop.~\ref{prop:iso-of-completions}\eqref{prop:iso-of-completions:item:3}, $\Lambda$ is also a $K\sps{\sigma}$-lattice in $\mothat$.

For each $i=1,\ldots, k$, the intersection of $\Lambda$ with the $i$-th factor $\cL[t]/f_i\cL[t]$ is a $K\sps{\sigma}$-lattice $\Lambda_i$ in that factor satisfying $t^u\Lambda_i=\tau^v\Lambda_i$. By Theorem \ref{thm:stable-O-lattice}\eqref{thm:stable-O-lattice:item:2}, the fraction $\frac{v}{u}$ therefore equals the slope of $f_i$. Hence, all the $f_i$ have the same slope.
\end{proof}

\section{General coefficient rings} \label{sec:general-coefficients}

In this section, let $F$ be a function field over $\Fq$, $\infty$ a place of $F$, and let $A$ be the ring of regular functions outside $\infty$. Further, let $t\in F$ be a non-constant function whose pole divisor is supported only at $\infty$.\footnote{The existence of such an element $t$ is guaranteed by the Riemann-Roch theorem.} By this choice, we have $t\in A$, and $A$ is a finite extension of $\Fq[t]$.

An (Anderson) $A$-module is defined in the same way as a $t$-module, but with $\Fq[t]$ replaced by $A$. In detail,
an $A$-module $(E,\phi)$ over $K$ of characteristic $\ell:A\to K$ is an $\Fq$-vector space scheme $E$ over $K$ isomorphic to some $\Ga^d$ together with a ring homomorphism
$\phi:A\to \End_{\grp,\Fq}(E),a\mapsto \phi_a$
such that for all $a\in A$: $d\phi_a-\ell(a):\Lie(E)\to \Lie(E)$ is nilpotent.

The \emph{$A$-motive} attached to an $A$-module $(E,\phi)$ is again
\[ \mot(E):=\Hom_{\grp,\Fq}(E,\Ga) \]
and the \emph{dual $A$-motive} is
\[ \dumot(E):=\Hom_{\grp,\Fq}(\Ga,E), \]
both carrying an $(A\otimes_{\Fq} K)$-action instead of just the $K[t]$-action.

The $A$-module $(E,\phi)$ is called abelian, if its motive $\mot(E)$ is finitely generated as $A\otimes_{\Fq} K$-module, and it is called $A$-finite, if its dual motive $\dumot(E)$ is finitely generated as $A\otimes_{\Fq} K$-module.

From Theorem \ref{thm:main-theorem}, we deduce the same equivalence for $A$-modules.

\begin{cor}\label{cor:main-theorem-A-modules}
For an Anderson $A$-module $(E,\phi)$, the following are equivalent
\begin{enumerate}
\item $(E,\phi)$ is abelian,
\item $(E,\phi)$ is $A$-finite.
\end{enumerate}
\end{cor}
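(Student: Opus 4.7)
The plan is to reduce the statement for general coefficient rings $A$ to the case $A=\Fq[t]$ already settled in Theorem~\ref{thm:main-theorem}. First I would observe that, since $\infty$ lies above the pole of $t$, the element $t$ is regular outside $\infty$, so $\Fq[t] \subseteq A$. Moreover, because $F/\Fq(t)$ is a finite extension and $A$ is the integral closure of $\Fq[t]$ in $F$, standard commutative algebra (e.g.\ the Krull--Akizuki / integral closure argument) gives that $A$ is finitely generated as an $\Fq[t]$-module.

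Next I would pass from an Anderson $A$-module $(E,\phi)$ to the underlying $t$-module $(E,\phi|_{\Fq[t]})$. The nilpotency condition $d\phi_a - \ell(a)$ on $\Lie(E)$ required for a $t$-module structure is inherited from the one for the $A$-module structure, now with characteristic $\ell|_{\Fq[t]}\colon \Fq[t]\to K$. With the $t$-module structure so defined, the $t$-motive $\mot(E)$ and the dual $t$-motive $\dumot(E)$ are the same abelian groups as the $A$-motive and dual $A$-motive; the $K[t]$-module structure is simply the restriction of the $A\otimes_{\Fq}K$-module structure along the inclusion of rings $K[t] \hookrightarrow A\otimes_{\Fq}K$ induced by $\Fq[t]\hookrightarrow A$.

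The crux is now the equivalence of finite generation over $K[t]$ and over $A\otimes_{\Fq}K$. Since $A$ is finitely generated as an $\Fq[t]$-module, tensoring with $K$ over $\Fq$ shows that $A\otimes_{\Fq}K$ is finitely generated as a $K[t]$-module. Hence any $A\otimes_{\Fq}K$-module that is finitely generated over $A\otimes_{\Fq}K$ is automatically finitely generated over the subring $K[t]$. Conversely, any module generated by a finite set over the subring $K[t]$ is a fortiori generated by that same finite set over the larger ring $A\otimes_{\Fq}K$. Applying this observation to both $\mot(E)$ and $\dumot(E)$, one obtains that $E$ is abelian as an $A$-module if and only if $E$ is abelian as a $t$-module, and $E$ is $A$-finite as an $A$-module if and only if $E$ is $t$-finite as a $t$-module.

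Combining this with Theorem~\ref{thm:main-theorem}, which asserts that being abelian and being $t$-finite coincide for $t$-modules, yields the claimed equivalence for general $A$-modules. I do not expect any serious obstacle; the only point that deserves a line of care is the verification that $A$ is finitely generated over $\Fq[t]$, and that restriction of scalars along $\Fq[t]\hookrightarrow A$ preserves finite generation in both directions under this hypothesis.
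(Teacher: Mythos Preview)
Your proposal is correct and follows essentially the same route as the paper: restrict to the underlying $t$-module, use that $A$ is finite over $\Fq[t]$ so that finite generation over $A\otimes_{\Fq}K$ and over $K[t]$ are equivalent, and then invoke Theorem~\ref{thm:main-theorem}. You supply a bit more detail (why $\Fq[t]\subseteq A$ and why $A$ is a finite $\Fq[t]$-module), but the argument is the same.
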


\begin{proof}
By restricting the coefficients from $A$ to $\Fq[t]$, we obtain a $t$-module $E$ whose $t$-motive and dual $t$-motive are $\mot(E)$ and $\dumot(E)$ considered with the restricted action.
By Thm.~\ref{thm:main-theorem}, $\mot(E)$ is finitely generated as $K[t]$-module if and only if $\dumot(E)$
is finitely generated as $K[t]$-module. As $A/\Fq[t]$ is a finite ring extension, and hence $A\otimes_{\Fq} K/K[t]$ is a finite ring extension, a module $M$ over $A\otimes_{\Fq} K$ is finitely generated if and only if $M$ is finitely generated as $K[t]$-module. Hence, $\mot(E)$ is a finitely generated $A\otimes_{\Fq} K$-module if and only if $\dumot$ is a finitely generated $A\otimes_{\Fq} K$-module.
\end{proof}

\section{Examples} \label{sec:examples}

We demonstrate our main theorems by a few examples of $t$-modules.

\begin{exmp}
Let $\psi$ be a Drinfeld module over $K$ of rank $r$ and characteristic $\ell:\Fq[t]\to K$, i.e.~
\[ \psi_t=\ell(t)+a_1\tau+\ldots + a_r\tau^r \]
with $a_1,\ldots, a_r\in K$, $a_r\ne 0$.

The matrix $t\one_1-\psi_t$ is a $1\times 1$-matrix, and therefore already in diagonal form, and the Newton polygon of $\lambda_1=t-\psi_t$ has vertices at $(0,-r)$ and $(1,0)$.

Hence, the Newton polygon consists of one edge which has positive slope $r$, and the main theorems state the well-known result that the Drinfeld module $\psi$ is a pure abelian and $t$-finite $t$-module of weight $\frac{1}{r}$.
\end{exmp}

\begin{exmp}
Extensions of Drinfeld modules by $\GG_a$:\\
Let $\psi$ be a Drinfeld module over $K$ of rank $r$ and characteristic $\ell:\Fq[t]\to K$, and let $\delta:\Fq[t]\to \tau K\{\tau\},a\mapsto \delta_a$ be a $\ell$-$\psi$-bi-derivation, i.e.~an $\Fq$-linear map satisfying
\[ \delta_{ab}=\ell(a)\cdot \delta_b+\delta_a\cdot \psi_b \]
for all $a,b\in \Fq[t]$.
We consider the $t$-module $(E,\phi)$ of dimension $2$ given by

\[ \phi_t = \begin{pmatrix} \psi_t & 0 \\ \delta_t & \theta \end{pmatrix},\]
where $\theta=\ell(t)$.
%\[ \phi_t = \begin{pmatrix}
%\psi_t & 0 & \cdots & \cdots & 0\\ 
%\delta_1 & \theta & \ddots &&\vdots \\
%\vdots & 0 & \ddots & & \vdots \\
%\delta_s & \cdots & \cdots & & \theta
%\end{pmatrix}. \]

For $t-\phi_t$, one computes the invariant factors $\lambda_1=1$, and $\lambda_2=(t-\psi_t)\delta_t^{-1}(t-\theta)$.
The Newton polygon of $\lambda_2$ has two edges. The second edge corresponds to the factor $(t-\psi_t)$ and has slope $r$, the other corresponds to the factor $(t-\theta)$ and has slope $0$.\\
As one slope is non-positive, the $t$-module $\phi$ is not abelian and not $t$-finite.
%
%This $t$-module is not abelian
%If $\kappa_1\in \mot(\phi)$ is the projection to the first coordinate,
%$m_1=\kappa_1, m_2=\tau\kappa_1,\ldots, m_r=\tau^{r-1}\kappa_1$ is a $\TT$-basis of $\mot(\phi)\otimes_{K[t]} \TT$.
%Indeed $\mot(\psi)\otimes_{K[t]} \TT\isom \mot(\phi)\otimes_{K[t]} \TT$, and the isomorphism is induced by the projection $\phi\to \psi$ of $t$-modules.
\end{exmp}

\begin{exmp}
The $d$-th Carlitz tensor power $C^{\otimes d}$ is given by
\[ C_t =\begin{pmatrix} 
\theta & 1 & 0 & \cdots & 0\\ 
0 & \ddots & \ddots & \ddots & \vdots\\
\vdots & \ddots &  \ddots & \ddots & 0 \\
0 & & \ddots & \ddots & 1 \\
\tau & 0 & \cdots & 0 & \theta \end{pmatrix} \in \Mat_{d\times d}(K\{\tau\})
.\]
Diagonalizing $t\one_d-C_t$ in $\Mat_{d\times d}(K\sls{\sigma}[t])$, we obtain the invariant factors $1,\ldots, 1$ and $(t-\theta)^d-\sigma^{-1}$. The Newton polygon of the last polynomial has exactly one edge starting in $(0,-1)$ and ending in $(d,0)$, hence has positive slope $\frac{1}{d}$.\\
Hence, our main theorems verify that $C^{\otimes d}$ is abelian and $t$-finite, and that its $t$-motive is pure of weight $d$.
\end{exmp}

\begin{exmp}\label{exmp:new-t-module-revisited}
We consider again Example \ref{exmp:new-t-module}, i.e.~the $t$-module $(E,\phi)$ over the rational function field $K=\Fq(\theta)$ with
\[ \phi_t=
%\begin{pmatrix} \theta & 0 \\ 1 & \theta \end{pmatrix}+
%\begin{pmatrix} 0 & 0 \\ 1 &0\end{pmatrix}\cdot \tau+
%\begin{pmatrix} 1 & 0 \\ 0 &1\end{pmatrix}\cdot \tau^2+
%\begin{pmatrix} 0 & 1 \\ 0 &0\end{pmatrix}\cdot \tau^3 =
\begin{pmatrix} \theta+\tau^2 & \tau^3 \\ 1+\tau & \theta+\tau^2 \end{pmatrix}
. \]
By diagonalizing $t\one_2-\phi_t\in \Mat_{2\times 2}(K\sls{\sigma}[t])$, we obtain the invariant factors $\lambda_1=1$ and 
\[\lambda_2=t^2-\left(2\sigma^{-2}+\theta^{1/q^3}+\theta\right)\cdot t+
\left(-\sigma^{-3}+(\theta+\theta^{q^2})\sigma^{-2}+\theta^{q^3+1}\right).\]
The Newton polygon of $\lambda_2$ is depicted in Figure \ref{fig:3}. 
It has only edges with positive slopes, hence this $t$-module $E$ is abelian and $t$-finite.

\noindent
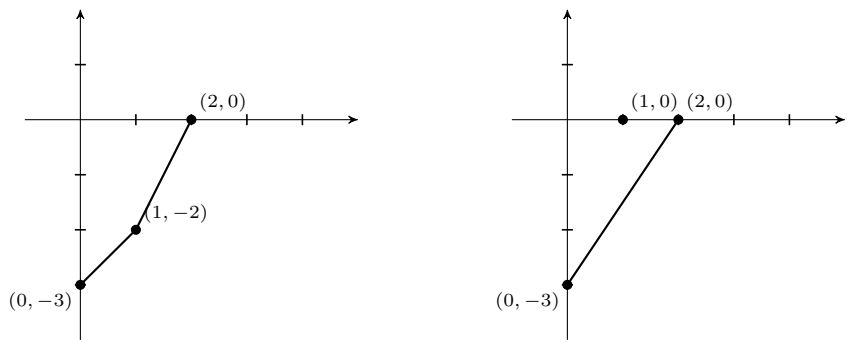
\begin{figure}[ht] 
\begin{tabular}{cp{1cm}c}
\begin{tikzpicture}[
    scale=0.73,
    axis/.style={thin, ->, >=stealth'},
    every node/.style={color=black},
    ]
	%\scriptsize 
    \tiny

	% Axis 
    \draw[axis] (-1,0)  -- (5,0) ;
    \draw[axis] (0,-4) -- (0,2) ;
    \draw[semithick] (1,-0.1) -- (1,0.1); 
    \draw[semithick] (2,-0.1) -- (2,0.1); 
    \draw[semithick] (3,-0.1) -- (3,0.1); 
    \draw[semithick] (4,-0.1) -- (4,0.1); 

    \draw[semithick] (-0.1,-1) -- (0.1,-1); 
    \draw[semithick] (-0.1,-2) -- (0.1,-2); 
    \draw[semithick] (-0.1,-3) -- (0.1,-3); 
    \draw[semithick] (-0.1,1) -- (0.1,1); 

	% Points
    \draw [fill] (0,-3) circle [radius=.081] node [below left] (0,-3) {$(0,-3)$};
    \draw [fill] (1,-2) circle [radius=.081] node [above right] (1,-2) {$(1,-2)$};
    \draw [fill] (2,0) circle [radius=.081] node [above right] (2,0) {$(2,0)$};

    % Lines
    \draw[thick] (0,-3) -- (1,-2) -- (2,0); 
\end{tikzpicture} 
& &
\begin{tikzpicture}[
    scale=0.73,
    axis/.style={thin, ->, >=stealth'},
    every node/.style={color=black},
    ]
	%\scriptsize 
    \tiny

	% Axis 
    \draw[axis] (-1,0)  -- (5,0) ;
    \draw[axis] (0,-4) -- (0,2) ;
    \draw[semithick] (1,-0.1) -- (1,0.1); 
    \draw[semithick] (2,-0.1) -- (2,0.1); 
    \draw[semithick] (3,-0.1) -- (3,0.1); 
    \draw[semithick] (4,-0.1) -- (4,0.1); 

    \draw[semithick] (-0.1,-1) -- (0.1,-1); 
    \draw[semithick] (-0.1,-2) -- (0.1,-2); 
    \draw[semithick] (-0.1,-3) -- (0.1,-3); 
    \draw[semithick] (-0.1,1) -- (0.1,1); 

	% Points
    \draw [fill] (0,-3) circle [radius=.081] node [below left] (0,-3) {$(0,-3)$};
    \draw [fill] (1,0) circle [radius=.081] node [above right] (1,0) {$(1,0)$};
    \draw [fill] (2,0) circle [radius=.081] node [above right] (2,0) {$(2,0)$};

    % Lines
    \draw[thick] (0,-3) -- (2,0); 
\end{tikzpicture} 
\end{tabular}
\caption{Newton polygon for $\lambda_2$ in characteristic $\ne 2$ and in characteristic $2$.}\label{fig:3}
\end{figure}

If the characteristic is different from $2$, $E$ is not pure. On the other hand, if $\Fq$ has characteristic $2$, $E$ is pure and the weight of its $t$-motive is $\frac{2}{3}$.
%As for further properties: It seems that this $t$-motive is not pure and not even mixed.

%A $K[t]$-basis for the $t$-motive can be computed to be $(m_1,m_2,m_3):=( \kappa_1, \kappa_2, \tau\kappa_2)$ with $\tau$-action given by
%\[ \tau \svect{m}{3} = \begin{pmatrix}
%\theta-t & 0 & t-\theta^q \\ 0 & 0 & 1\\ t-\theta-1 & t-\theta & \theta^q-t\end{pmatrix}\cdot \svect{m}{3}. \]
\end{exmp}

%$\ldots$ add more examples ?? $\ldots$

%\enlargethispage{10mm}

%---------------------------------------------------------------------
% Ende des eigentlichen Artikels
%---------------------------------------------------------------------
\bibliographystyle{alpha}
%\bibliography{../reference}
\def\cprime{$'$}

%\vspace*{.25cm}

\parindent0cm

\end{document}